\theoremstyle{plain}
\newcommand{\skp}[1]{\langle#1\rangle}
\renewcommand{\Re}{\mathop{\rm Re}}
\newtheorem{thm}{Theorem}[section]
\newtheorem{proposition}[thm]{Proposition}
\newtheorem{corollary}[thm]{Corollary}
\newtheorem{definition}[thm]{Definition}
\newtheorem{theorem}[thm]{Theorem}
\newtheorem{lemma}[thm]{Lemma}
\newtheorem{remark}[thm]{Remark}
\newtheorem*{prooft1*}{Proof of Theorem 1.1}
\DeclareMathOperator{\Div}{div} 
\newcommand{\B}{{\mathbb B}}
\newcommand{\C}{{\mathbb C}}
\newcommand{\N}{{\mathbb N}}
\newcommand{\R}{{\mathbb R}}
\newcommand{\cD}{{\mathcal D}}
\newcommand{\cE}{{\mathcal E}}
\newcommand{\cH}{{\mathcal H}}
\newcommand{\cL}{{\mathcal L}}
\newcommand{\cP}{{\mathcal P}}
\renewcommand{\gg}{\gamma}
\newcommand{\gD}{\Delta}
\newcommand{\gve}{\varepsilon}
\newcommand{\go}{\omega}
\newcommand{\gvp}{\varphi}
\newcommand{\gt}{\theta}
\newcommand{\gs}{\sigma}
\begin{document}
\title[The Porous Medium Equation on Manifolds with Conical Singularities]%
{Smoothness and Long Time Existence for Solutions of the Porous Medium Equation on Manifolds with Conical Singularities}
\author{Nikolaos Roidos and Elmar Schrohe}
\address{Institut f\"ur Analysis, Leibniz Universit\"at Hannover, Welfengarten 1, 30167 Hannover, Germany}
\email{roidos@math.uni-hannover.de, schrohe@math.uni-hannover.de}

\subjclass[2010]{35K59; 35K65; 35R01}
\date{\today}
\begin{abstract} We study the porous medium equation on manifolds with conical singularities. Given strictly positive initial values, we show that the solution exists in the maximal $L^q$-regularity space for all times and is instantaneously smooth in space and time, where the maximal $L^q$-regularity is obtained in the sense of Mellin-Sobolev spaces. Moreover, we obtain precise information concerning the asymptotic behavior of the solution close to the singularity. Finally, we show the existence of generalized solutions for non-negative initial data. 
\end{abstract}
\maketitle

\section{Introduction}
The porous medium equation (PME) is the parabolic diffusion equation 
\begin{eqnarray}\label{e1}
u'(t)-\Delta(u^{m}(t))&=&0,\quad t>0,\\\label{e2}
u(0)&=&u_{0}.
\end{eqnarray}
It describes the flow of a gas in a porous medium; $u$ is the density distribution of the gas, $\Delta$ is the (negative) Laplacian, and $m>0$. For $m=1$ we recover the usual heat equation.

In this article, we consider the PME on a manifold with conical singularities, which we model by a smooth compact $(n+1)$-dimensional manifold $\mathbb{B}$ with (possibly disconnected) boundary $\partial\mathbb{B}$, $n\ge1$. $\mathbb B$ is
endowed with a Riemannian metric $g$ which, in a collar neighborhood $[0,1)\times\partial\mathbb{B}$ of the boundary, is of the degenerate form
\begin{gather}\label{g}
g= dx^2+x^2h(x).
\end{gather}
Here, $x\in[0,1)$ denotes the distance from $\partial\mathbb{B}$ and $x\mapsto h(x)$
is a smooth (up to $x=0$) family of non-degenerate Riemannian metrics on the cross-section, so that $\partial\mathbb{B}$ corresponds to the conical points. 
We speak of straight conical singularities, if $h$ is constant in $x$, else of warped conical singularities. 
On the collar $(0,1)\times\partial\mathbb{B}$, the associated Laplace-Beltrami operator $\Delta $ has the conically degenerate form:
\begin{gather}\label{delta}
\Delta=\frac{1}{x^{2}}\Big((x\partial_{x})^{2}+\big(n-1+\frac{x\partial_x\det[h(x)]}{2\det[h(x)]}\big)x\partial_{x}+\Delta_{h(x)}\Big),
\end{gather}
where $\Delta_{h(x)}$ is the Laplacian on the cross-section induced by the metric $h(x)$. 
Hence $\Delta$ has the typical features of a {\em cone differential operator}: the factor $x^{-2}$ in front and the derivative with respect to $x$ only appearing in the form $x\partial_x$. It therefore naturally acts on scales of {\em weighted Mellin-Sobolev} spaces $\mathcal{H}_{p}^{s,\gamma}(\mathbb{B})$, $s,\gamma\in \mathbb{R}$, $p\in(1,\infty)$, see Section 2 for details, and this is the framework in which we will work.

The problem \eqref{e1}-\eqref{e2} has been studied in various domains and under many aspects. We here show existence of long time solutions as well as smoothing in space and time for suitably chosen Mellin-Sobolev spaces. 
Furthermore, we obtain information on the asymptotic behavior of the solution close to the conic singularities and show its dependence on the local geometry of the tip. 
As a first step in the above direction we established in \cite{RS3} existence, uniqueness and maximal $L^q$-regularity for the short time solution on Mellin-Sobolev spaces of arbitrarily high order. This result will be one of the basic tools for our analysis. 
 
The main difficulty compared to the case of classical domains is the lack of basic machinery. 
In order to overcome this problem we rely mostly on abstract maximal regularity theory for linear and quasilinear parabolic problems. The methods we develop are general and can be applied to different problems. For instance, we do not have the usual gradient estimates for the short time solution as e.g. in the case of complete manifolds with Ricci curvature bounded from below, see \cite{LV}. Instead, we show a functional analytic analog, using {\em interpolation space estimates}, see Theorem \ref{xq}, based on uniform maximal $L^q$-regularity estimates.
 
In the same spirit, we establish a general smoothing result for the abstract quasilinear equation, see Section 3, via maximal $L^q$-regularity theory on Banach scales. We prove existence for long time by controlling uniformly all the parameters of a Banach fixed point argument due to H. Amann. 
 The main result of this article concerning the porous medium equation is the theorem, below. The set-up is the same as in \cite{RS3}. 
 
 Recall that $n+1$ is the dimension of $\mathbb B$. 
Denote by $\lambda_{1}$ the greatest nonzero eigenvalue of $\Delta_{h(0)}$; 
let 
\begin{eqnarray}\label{epsilon1}
\overline \varepsilon = -\frac {n-1}2+ 
\sqrt{\left(\frac {n-1}2\right)^2-\lambda_1}>0
\end{eqnarray}
and write ${\mathcal E}_0$ for the space of smooth functions on $\mathbb B$ which are locally constant near $\partial \mathbb B$.

\begin{theorem}\label{Th1}
Choose 
\begin{eqnarray}\label{gamma}
\gamma\in \Big(\frac{n-3}{2},\frac{n-3}{2}+\min\{\overline\varepsilon,2\}\Big).
\end{eqnarray}
Moreover, fix $1<p,q<\infty$ and $s_0\in \mathbb R$ with 
\begin{eqnarray}\label{pq1}
 \frac{n+1}p+\frac2q<1, \ \gamma > \frac{n-3}{2}+\frac2q \text{ and }\\
 s_{0}>-1+\frac{n+1}{p}+\frac{2}{q}. 
\end{eqnarray}
Then:

{\rm (a)} For any $T>0$ and any strictly positive initial value 
$$
u_{0}\in (\mathcal{H}_{p}^{s_{0}+2,\gamma+2}(\mathbb{B})\oplus{\mathcal E}_0,\mathcal{H}_{p}^{s_{0},\gamma}(\mathbb{B}))_{\frac{1}{q},q}
$$
the porous medium equation \eqref{e1}-\eqref{e2} has a unique solution 
\begin{eqnarray}\label{reg125}
u\in W^{1,q}(0,T;\mathcal{H}_{p}^{s_{0},\gamma}(\mathbb{B})) \cap L^{q}(0,T;\mathcal{H}_{p}^{s_{0}+2,\gamma+2}(\mathbb{B})\oplus{\mathcal E}_0).
\end{eqnarray}

{\rm (b)} Moreover, $u$ has the following regularity properties: 
\begin{eqnarray}\nonumber
\lefteqn{u \in C([0,T], (\mathcal{H}_{p}^{s_{0}+2,\gamma+2}(\mathbb{B})\oplus{\mathcal E}_0,\mathcal{H}_{p}^{s_{0},\gamma}(\mathbb{B}))_{\frac{1}{q},q})}\nonumber\\
&&\cap \, C([0,T];\mathcal{H}_{p}^{s_{0}+2-\frac{2}{q}-\varepsilon,\gamma+2-\frac{2}{q}-\varepsilon}(\mathbb{B})+{\mathcal E}_0)\nonumber\\
\nonumber
&&\cap\, C^{\delta}((0,T];\mathcal{H}_{p}^{s,\gamma+2-\frac{2}{q}-2\delta}(\mathbb{B})+{\mathcal E}_0)\cap C^{1+\delta}((0,T];\mathcal{H}_{p}^{s,\gamma-\frac{2}{q}-2\delta}(\mathbb{B}))\\\nonumber
&&\cap\, C^{\delta}([0,T];\mathcal{H}_{p}^{s_{0}+2-\frac{2}{q}-2\delta,\gamma+2-\frac{2}{q}-2\delta}(\mathbb{B})\oplus{\mathcal E}_0)\cap C^{1+\delta}([0,T];\mathcal{H}_{p}^{s_{0}-\frac{2}{q}-2\delta,\gamma-\frac{2}{q}-2\delta}(\mathbb{B}))\\\label{reg126}
&&\cap\, C^{k}((0,T];\mathcal{H}_{p}^{s,\gamma-2(k-1)}(\mathbb{B}))
\cap\;C((0,T];\mathcal{H}_{p}^{s,\gamma+2}(\mathbb{B})\oplus{\mathcal E}_0)
\end{eqnarray}
for all $k\in\mathbb{N}\backslash\{0\}$, $s>0$, $\varepsilon>0$ and 
\begin{eqnarray}\label{delta1}
\delta\in\Big(0,\frac{1}{2}\min\Big\{2-\frac{n+1}{p}-\frac{2}{q},\gamma-\frac{n-3}{2}-\frac{2}{q}\Big\}\Big).
\end{eqnarray}

{\rm (c)} If $c_{0}\leq u_{0}\leq c_{1}$ on $\mathbb{B}$ for suitable constants $c_{0},c_{1}>0$, then also $c_{0}\leq u\leq c_{1}$ on $[0,T]\times\mathbb{B}$. 
\end{theorem}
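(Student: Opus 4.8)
\emph{Strategy.} The plan is to realize the porous medium equation as an abstract quasilinear Cauchy problem on the Mellin--Sobolev scale, to solve it for short time with \cite{RS3}, and then to push the solution to arbitrary time by controlling all constants in an Amann-type continuation argument, using the interpolation-space estimate of Theorem \ref{xq} together with a comparison principle. Concretely, expanding $\Delta(u^{m})=m u^{m-1}\Delta u+m(m-1)u^{m-2}|\nabla u|_{g}^{2}$ I would rewrite \eqref{e1}--\eqref{e2} as $u'+A(u)u=f(u)$, $u(0)=u_{0}$, with $A(u)=-m u^{m-1}\Delta$ and $f(u)=m(m-1)u^{m-2}|\nabla u|_{g}^{2}$, in the setting $X_{0}=\mathcal{H}_{p}^{s_{0},\gamma}(\mathbb{B})$, $X_{1}=\mathcal{H}_{p}^{s_{0}+2,\gamma+2}(\mathbb{B})\oplus{\mathcal E}_{0}$ and the trace space $X_{1/q,q}=(X_{1},X_{0})_{1/q,q}$. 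For $u$ in a small ball around the strictly positive $u_{0}$ in $X_{1/q,q}$ the function $u^{m-1}$ is bounded away from $0$ and $\infty$ and, by \eqref{pq1}, is a bounded pointwise multiplier on $X_{0}$ and on $X_{1}$; hence $A(u)$ has uniformly bounded maximal $L^{q}$-regularity on these spaces and $f$ is locally Lipschitz from $X_{1/q,q}$ to $X_{0}$. This is exactly the setting in which \cite{RS3} yields a short-time solution with the regularity \eqref{reg125} on a maximal interval $[0,t_{+})$, together with uniqueness.

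\emph{Part (c) and a priori bounds.} Since $\Delta(c^{m})=0$ for every constant $c$, the constants are stationary solutions of \eqref{e1}. With $c_{0}\le u_{0}\le c_{1}$, the difference $w=u-c_{1}$ solves a linear parabolic problem whose coefficient $(u^{m}-c_{1}^{m})/(u-c_{1})$ is nonnegative, and the range of $\gamma$ in \eqref{gamma}--\eqref{pq1}, i.e.\ $\tfrac{n-3}{2}<\gamma<\tfrac{n-3}{2}+\overline\varepsilon$, is precisely what places the relevant closed extension of the cone Laplacian in the situation where the solution operators of such linear problems are positivity-preserving; the comparison principle for quasilinear equations then gives $w\le 0$, and the bound from below follows symmetrically. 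This proves (c), and applied with $c_{0}=\min_{\mathbb{B}}u_{0}>0$ and $c_{1}=\max_{\mathbb{B}}u_{0}$ it also produces a two-sided bound $0<c_{0}\le u(t)\le c_{1}$ valid on all of $[0,t_{+})$.

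\emph{Part (a).} To obtain $t_{+}>T$ for every $T>0$ I would run Amann's Banach fixed-point continuation: it suffices to bound, uniformly on $[0,t_{+})$, first the maximal $L^{q}$-regularity constant of $A(u(t))$ and the Lipschitz constant of $f$ near $u(t)$, and second $\sup_{t}\|u(t)\|_{X_{1/q,q}}$. The first is supplied by the previous paragraph, since the two-sided bound on $u$ makes $u^{m-1}(t)$ uniformly elliptic and a uniformly bounded multiplier, so the uniform maximal regularity of \cite{RS3} applies, and the Lipschitz bound for $f$ likewise only involves the $L^{\infty}$-bounds and a fixed amount of regularity. The second is where Theorem \ref{xq} enters: the uniform maximal $L^{q}$-regularity estimates give, on time-steps of a fixed length, a bound for $\sup_{t}\|u(t)\|_{X_{1/q,q}}$ in terms of the data, the $L^{\infty}$-bounds and $f$ --- the functional-analytic replacement for the missing gradient estimate. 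With all constants uniform the solution continues past any $T$, and uniqueness is inherited from the fixed-point argument; this gives (a).

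\emph{Part (b) and the main obstacle.} For $t>0$ the solution is strictly positive and bounded, so $u^{m-1}(t)$ is a multiplier on every higher Mellin--Sobolev space; applying the abstract smoothing theorem of Section 3 on the Banach scale $\{\mathcal{H}_{p}^{s,\gamma}(\mathbb{B})\oplus{\mathcal E}_{0}\}_{s}$ (and shifting the weight) bootstraps $u(t)\in\mathcal{H}_{p}^{s,\gamma+2}(\mathbb{B})\oplus{\mathcal E}_{0}$ for all $s>0$. Time regularity follows by differentiating the equation, $u'=\Delta(m u^{m-1}u')$ and so on, each time-derivative costing two units of weight since $\Delta$ lowers the Mellin--Sobolev weight by two, while $\Delta({\mathcal E}_{0})$ consists of functions vanishing near $\partial\mathbb{B}$ (which is why $u'$ carries no ${\mathcal E}_{0}$-asymptotics); the H\"older statements come from $W^{1,q}\hookrightarrow C^{\delta}$ with the admissible $\delta$ of \eqref{delta1}, and the precise asymptotic term ${\mathcal E}_{0}$ at weight $\gamma+2$ is dictated by the conormal symbol of $\Delta$, the range \eqref{gamma} ensuring that among its indicial roots only $0$ lies in the relevant strip, the next one being $\overline\varepsilon$. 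I expect the main obstacle to be the long-time step: unlike the classical case there is no Harnack or gradient machinery, so everything has to be extracted from Theorem \ref{xq} together with the uniform resolvent bounds for the family $\{-m u^{m-1}\Delta\}$, and the delicate point is keeping the maximal-regularity constant, the Lipschitz constant of $f$ on $X_{1/q,q}$ and the length of the fixed-point time-step under control uniformly in $t<t_{+}$, on the strength of the two-sided $L^{\infty}$-bound alone.
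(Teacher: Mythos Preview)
Your overall architecture is right, and you correctly identify the continuation step as the heart of the matter. But there is a genuine gap, and one strategic difference from the paper worth flagging.

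\textbf{The gap: $L^\infty$ bounds alone do not give uniform maximal regularity.} You write that ``the two-sided bound on $u$ makes $u^{m-1}(t)$ uniformly elliptic and a uniformly bounded multiplier, so the uniform maximal regularity of \cite{RS3} applies''. This is where the argument fails. The $R$-sectoriality of $c-v\underline\Delta$ in \cite{RS3} is obtained by a freezing-of-coefficients construction: one covers $\mathbb{B}$ by patches on which $v$ oscillates by less than a fixed small amount, and builds the resolvent from local pieces via a Neumann series. To make the shift $c$ and the $R$-bound uniform over the family $\{u^{m-1}(t):t\in[0,t_{+})\}$, one needs the oscillation of $u^{m-1}(t)$ on patches of a \emph{fixed} diameter to be uniformly small, i.e.\ a uniform modulus of continuity for the family. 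A two-sided $L^\infty$ bound does not provide this. The paper supplies the missing ingredient through Theorem~\ref{Holder}: it shows, via the Lady\v{z}enskaya--Solonnikov--Ural'ceva integral inequalities, that $u$ is H\"older continuous on $\mathbb{B}$ with exponent and H\"older norm depending only on $u_0$, $g$ and $n$. Only then can one conclude (Proposition~\ref{Rbound}) that the $R$-sectorial bound is uniform in $t$, which is what drives both Theorem~\ref{xq} and the continuation argument. Without an equicontinuity statement of this kind your Part (a) does not go through.

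\textbf{A strategic difference: the substitution $w=u^{m}$.} You keep the equation in the form $u'=mu^{m-1}\Delta u+m(m-1)u^{m-2}|\nabla u|_g^{2}$, carrying the gradient term as a semilinear $f$. The paper instead passes to $w=u^m$, obtaining $w'=mw^{(m-1)/m}\Delta w$ with \emph{no} semilinear term; the equivalence of the two problems in the maximal regularity space is checked in Remark~\ref{equivalence}. This is not merely cosmetic: Theorem~\ref{xq} is proved for the $w$-equation by inverting $\partial_t+c-mw^{(m-1)/m}\underline\Delta$ as a non-autonomous operator, and the absence of a right-hand side depending on $\nabla w$ is what makes that estimate clean. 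In your formulation you would additionally have to control $\|f(u(t))\|_{X_0}$ and its Lipschitz constant uniformly on $[0,t_+)$, which again requires more than $L^\infty$ bounds on $u$ (note that $|\nabla u|_g^2$ carries a factor $x^{-2}$ near $\partial\mathbb{B}$, so weight bookkeeping is nontrivial). The substitution avoids all of this.

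Your sketches for (b) and (c) are otherwise in line with the paper: comparison with constants for (c), the abstract smoothing Theorem~\ref{SE} on the $\{X^{s}\}$-scale for spatial regularity, differentiation of the equation for time regularity (each $\partial_t$ costing two units of weight), and Amann's maximal continuous regularity theorem for the H\"older statements. But none of this can start until the uniform $R$-sectoriality is in place.
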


\begin{remark}\label{rem1.2}
\rm (a) We consider here $\Delta$ as an unbounded operator in the cone Sobolev space $\mathcal H^{s_0,\gamma}_p(\mathbb B)$. Cone differential operators in general have many closed extensions.
Here we fix the extension $\underline \Delta$ with domain 
\begin{eqnarray}\label{eq:domain}
\mathcal D(\underline\Delta)= \mathcal H^{s_0+2,\gamma+2}_p(\mathbb B)\oplus {\mathcal E}_0.
\end{eqnarray}
This choice is particularly convenient as discussed in Section \ref{PME_cone}, below. 

(b) For the interpolation spaces the following embeddings hold: For every $\varepsilon>0$
\begin{eqnarray*}
\mathcal{H}_{p}^{s_{0}+2-\frac{2}{q}+\varepsilon,\gamma+2-\frac{2}{q}+\varepsilon}(\mathbb{B})+{\mathcal E}_0\hookrightarrow (\mathcal{H}_{p}^{s_{0}+2,\gamma+2}(\mathbb{B})\oplus{\mathcal E}_0,\mathcal{H}_{p}^{s_{0},\gamma}(\mathbb{B}))_{\frac{1}{q},q}\text{ and} \\
 (\mathcal{H}_{p}^{s_{0}+2,\gamma+2}(\mathbb{B})\oplus{\mathcal E}_0,\mathcal{H}_{p}^{s_{0},\gamma}(\mathbb{B}))_{\frac{1}{q},q}\hookrightarrow \mathcal{H}_{p}^{s_{0}+2-\frac{2}{q}-\varepsilon,\gamma+2-\frac{2}{q}-\varepsilon}(\mathbb{B})+{\mathcal E}_0.
\end{eqnarray*}
The sum in the second embedding is direct, provided $\gamma+2-\frac{2}{q}-\varepsilon\ge \frac{\dim \mathbb{B}}2$.

(c) The first inclusion in \eqref{reg126} is an immediate consequence of the fact that 
\begin{eqnarray}\nonumber
\lefteqn{W^{1,q}(T_{0},T_{2};\mathcal{H}_{p}^{s_{0},\gamma}(\mathbb{B})) \cap L^{q}(T_{0},T_{2};\mathcal{H}_{p}^{s_{0}+2,\gamma+2}(\mathbb{B})\oplus{\mathcal E}_0)}\\ \label{contint}
&&\hookrightarrow C([T_{0},T_{2}];(\mathcal{H}_{p}^{s_{0}+2,\gamma+2}(\mathbb{B}) \oplus{\mathcal E}_0, \mathcal{H}_{p}^{s_{0},\gamma}(\mathbb{B}))_{\frac{1}{q},q}),
\end{eqnarray}
where $0\leq T_{0}< T_{1}\leq T_{2}\leq T_{3}<\infty$, see \cite[Theorem III.4.10.2]{Am}; note that if $T_{1}$, $T_{3}$ are fixed, then the norm of the embedding is independent of $T_{0}, T_{2}$ due to \cite[Corollary 2.3]{CL}. The second inclusion then follows from (b). We will see the other inclusions later in the proof. 

(d) Theorem \ref{Th1} shows how the geometry of the cone is reflected in the solution. In fact, the choice of $\gamma$ is limited by $\overline\varepsilon$, 
determined from $\lambda_1$. Now we conclude from (a) and (b) that, for small $\varepsilon$, the solution 
$u$ is an element of $C([0,T], \mathcal{H}_{p}^{s_{0}+2-{2}/{q}-\varepsilon,\gamma+2-{2}/{q}-\varepsilon}(\mathbb{B})\oplus {\mathcal E}_0)$. 
We can therefore decompose it $u=u_{{\mathcal E}_0}+ u_{\mathcal H}$, 
where $u_{{\mathcal E}_0} \in C([0,T], {\mathcal E}_0)$ and $u_{\mathcal H}\in 
C([0,T], \mathcal{H}_{p}^{s_{0}+2-{2}/{q}-\varepsilon,\gamma+2-{2}/{q}-\varepsilon}(\mathbb{B}))$. A standard estimate for cone Sobolev spaces, see 
\cite[Corollary 2.9]{RS2}, then implies that, for a suitable constant $c$ depending only on $\mathbb B$ and $p$, 
$$|u_{\mathcal H} (t)| \le c x^{\gamma+2-{2}/{q}-\varepsilon-\dim \mathbb B/2}
\|u_{\mathcal H}\|_{ \mathcal{H}_{p}^{s_{0}+2-\frac{2}{q}-\varepsilon,\gamma+2-\frac{2}{q}-\varepsilon}(\mathbb{B})}.$$
\end{remark}

\subsubsection*{Outline of the paper} In Section 2 we collect maximal $L^q$-regularity 
results for the short time solution of the PME on conic manifolds. Section 3 contains an 
abstract smoothing result that we will use later on. As a preparation for the proof of the long
time existence, we show in Section 4 a comparison principle and a uniform Hölder 
regularity result for the solution. We then derive an estimate for the solution in a suitable
interpolation space. This allows us to prove Theorem 1.1 in Section 5. 
We finally recall the notion 
of weak solutions and show existence of weak solutions for non-negative initial data and $m\ge1$.

\subsubsection*{Related work} In a similar spirit, Shao \cite{Shao} showed global existence of $L^{1}$-mild solutions of the PME on singular manifolds, relying on a concept of Amann. Bahuaud and Vertman in \cite{Ver} considered the normalized Yamabe flow on manifolds with incomplete edge singularities and established existence and uniqueness of long time solutions. 

\section{Preliminary Results}
\setcounter{equation}{0}

\subsection{Sectoriality and maximal $L^q$-regularity } 
Let $X_{1}\overset{d}{\hookrightarrow} X_{0}$ be a continuously and densely injected complex Banach couple.

\begin{definition}
By $\mathcal{P}(\theta)$, $0\le \theta<\pi$, we denote the class of all closed densely defined linear operators $A$ in $X_{0}$ such that 
\[
S_{\theta}=\{\lambda\in\mathbb{C}\,|\, |\arg \lambda|\leq\theta\}\cup\{0\}\subset\rho{(-A)} \quad \mbox{and} \quad (1+|\lambda|)\|(A+\lambda)^{-1}\|_{\mathcal{L}(X_{0})}\leq K, \quad \lambda\in S_{\theta},
\]
for some constant $K\geq1$ that is called {\em sectorial bound} of $A$. The elements in $\mathcal{P}(\theta)$ are called {\em (invertible) sectorial operators of angle $\theta$}.
\end{definition}

Focusing on sectorial operators of angle greater than $\frac{\pi}{2}$ we recall the following basic property for the abstract linear first order Cauchy problem. Namely, let $T>0$, $q\in(1,\infty)$, $f\in L^q(0,T;X_{0})$ and consider the equation
\begin{eqnarray}\label{app1}
u'(t)+Au(t)&=&f(t), \quad t\in(0,T),\\\label{app2}
u(0)&=&0,
\end{eqnarray}
where $-A:X_{1}\rightarrow X_{0}$ is the infinitesimal generator of an analytic semigroup on $X_{0}$. We say that $A$ has {\em maximal $L^q$-regularity} if for any $f\in L^q(0,T;X_{0})$ there exists a unique $u\in W^{1,q}(0,T;X_{0})\cap L^q(0,T;X_{1})$ solving \eqref{app1}-\eqref{app2}. In this case, $u$ also depends continuously on $f$. Furthermore, the above property is independent of $q$. 

All the Banach spaces we will consider in the sequel belong to the class UMD (unconditionality of martingale differences property), i.e. the class of Banach spaces $X_{0}$ such that the Hilbert transform is bounded in $L^{2}(\mathbb{R};X_{0})$ (see \cite[Section III.4.4]{Am}). 

\begin{definition}
Let $\{\epsilon_{k}\}_{k=1}^{\infty}$ be the sequence of the Rademacher functions and $\theta\in[0,\pi)$. An operator $A\in \mathcal{P}(\theta)$ is called {\em $R$-sectorial of angle $\theta$} if there exists a constant $R\geq 1$, called {\em $R$-sectorial bound} of $A$, such that for any choice of $\lambda_{1},\ldots,\lambda_{N}\in S_{\theta}\backslash\{0\}$ and $x_{1},\ldots,x_{N}\in X_0$, $N\in\mathbb{N}\backslash\{0\}$, we have that
\begin{eqnarray*}
\Big\|\sum_{k=1}^{N}\epsilon_{k}\lambda_{k}(A+\lambda_{k})^{-1}x_{k}\Big\|_{L^{2}(0,1;X_0)} \leq R\,\Big\|\sum_{k=1}^{N}\epsilon_{k}x_{k}\Big\|_{L^{2}(0,1;X_0)}.
\end{eqnarray*}
\end{definition} 

Then, the following classical result holds. 

\begin{theorem}[{\rm Kalton and Weis, \cite[Theorem 6.5]{KW1}}]
In a UMD Banach space any $R$-sectorial operator of angle greater than $\frac{\pi}{2}$ has maximal $L^{q}$-regularity. 
\end{theorem}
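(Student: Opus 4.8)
The plan is to characterize maximal $L^{q}$-regularity by an operator-valued Fourier multiplier on the line and then to apply the Mikhlin-type multiplier theorem of Weis, which is available precisely because $X_{0}$ is UMD. As a first step I would pass from the finite interval to $\mathbb R$. Since $A$ is invertible sectorial of angle $\theta>\tfrac\pi2$, the operator $-A$ generates an exponentially decaying bounded analytic semigroup, and the unique solution of \eqref{app1}--\eqref{app2} in $W^{1,q}(0,T;X_{0})\cap L^{q}(0,T;X_{1})$, if it exists, is the restriction to $(0,T)$ of $u(t)=\int_{-\infty}^{t}e^{-(t-\tau)A}f(\tau)\,d\tau$, where $f$ is extended by zero; causality of this convolution makes the restriction legitimate. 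Hence it suffices to prove that the map $f\mapsto Au$ is bounded on $L^{q}(\mathbb R;X_{0})$. Taking the Fourier transform in $t$ turns this into the assertion that
\[
m\colon\mathbb R\setminus\{0\}\to\mathcal L(X_{0}),\qquad m(s)=A(is+A)^{-1},
\]
is a bounded Fourier multiplier on $L^{q}(\mathbb R;X_{0})$ for $1<q<\infty$; from the resulting estimate $\|Au\|_{L^{q}}\le C\|f\|_{L^{q}}$ one recovers $u'=f-Au\in L^{q}(\mathbb R;X_{0})$, and then $u\in W^{1,q}(\mathbb R;X_{0})\cap L^{q}(\mathbb R;X_{1})$ because the graph norm of the invertible operator $A$ is equivalent to the $X_{1}$-norm. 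The $q$-independence is built into the multiplier formulation.

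By Weis's operator-valued multiplier theorem it is enough to verify that the two families $\{m(s):s\ne0\}$ and $\{s\,m'(s):s\ne0\}$ are $R$-bounded, and this is where the hypothesis enters. Because $\theta>\tfrac\pi2$, the punctured imaginary axis is contained in $S_{\theta}\setminus\{0\}$, so $\{is(is+A)^{-1}:s\ne0\}$ is a subfamily of $\{\lambda(A+\lambda)^{-1}:\lambda\in S_{\theta}\setminus\{0\}\}$ and hence $R$-bounded, $R$-sectoriality being exactly the $R$-boundedness of the latter family. Consequently $\{m(s)=I-is(is+A)^{-1}:s\ne0\}$ is $R$-bounded as well. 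Differentiating the resolvent identity gives $m'(s)=-iA(is+A)^{-2}$, whence
\[
s\,m'(s)=\bigl(-is(is+A)^{-1}\bigr)\bigl(A(is+A)^{-1}\bigr),
\]
a product of two $R$-bounded families and therefore $R$-bounded. This checks the hypotheses of the multiplier theorem and, after restriction to $\Rp$ and then to $(0,T)$, yields existence together with the continuous dependence on $f$; uniqueness on $(0,T)$ is standard, since the homogeneous problem with zero initial datum has only the trivial solution by the variation-of-constants formula.

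The genuinely deep ingredient is Weis's operator-valued Fourier multiplier theorem itself --- and, beneath it, Bourgain's vector-valued Littlewood--Paley theory and the boundedness of the vector-valued Hilbert transform characteristic of UMD spaces --- which I would invoke as a black box, consistent with the statement being quoted here as classical. An equivalent route, in fact the one taken in \cite[Theorem 6.5]{KW1}, recasts maximal $L^{q}$-regularity as the closedness of the operator sum $\tfrac{d}{dt}+A$ on $L^{q}(\Rp;X_{0})$ and applies the Kalton--Weis theorem on sums of an operator with bounded $H^{\infty}$-calculus and an $R$-sectorial operator: $\tfrac{d}{dt}$ has a bounded $H^{\infty}$-calculus of angle $\tfrac\pi2$, $A$ is $R$-sectorial of spectral angle $<\tfrac\pi2$ (this is the content of $\theta>\tfrac\pi2$ in the present convention), the two angles add to less than $\pi$, and the resolvents commute, so $\tfrac{d}{dt}+A$ is closed with $\|u'\|_{L^{q}}+\|Au\|_{L^{q}}\lesssim\|u'+Au\|_{L^{q}}$, which is maximal regularity. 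Either way, the only real obstacle is that multiplier (respectively operator-sum) theorem; the remaining work --- reconciling the two angle conventions and transferring between $(0,T)$, $\Rp$ and $\mathbb R$ --- is routine bookkeeping.
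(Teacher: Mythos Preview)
The paper does not prove this statement at all; it is quoted as a classical result with a bare reference to \cite[Theorem 6.5]{KW1}. Your sketch is a correct outline of the standard argument --- in fact of both standard arguments: the operator-valued Mikhlin route via Weis's multiplier theorem and the Kalton--Weis operator-sum route --- and thus supplies considerably more than the paper itself does. There is nothing to compare on the paper's side, and nothing wrong on yours.
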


If $A\in\mathcal{P}(\theta)$, $\theta\in[0,\pi)$, is a sectorial operator and $z\in\mathbb{C}$ with $\mathrm{Re}(z)<0$, then the complex powers $A^z$ of $A$ are defined by the Dunford integral
\begin{gather}\label{ppo}
A^{z}=\frac{1}{2\pi i}\int_{\Gamma}(-\lambda)^{z}(A+\lambda)^{-1}dz\in\mathcal{L}(X_{0}),
\end{gather}
for an appropriate path $\Gamma$ (see \cite[Theorem III.4.6.5]{Am}). 
The above operators are injections, and hence the powers $A^z$ for $\mathrm{Re}(z)>0$ are defined by $A^z=(A^{-z})^{-1}$, which are in general unbounded operators. 
By using the formula \eqref{ppo} we can also define the imaginary powers $A^{it}$, $t\in\mathbb{R}$, of $A$ (see \cite[(III.4.6.21)]{Am}).

\begin{definition}
Let $A\in\mathcal{P}(\theta)$, $\theta\in[0,\pi)$. We say that $A$ has bounded imaginary powers, if there exist some $\varepsilon,\delta>0$ such that 
\[
A^{it}\in\mathcal{L}(X_0) \quad \mbox{ and } \quad \|A^{it}\|\leq \delta \quad \mbox{for all } \quad t\in[-\varepsilon,\varepsilon].
\]
In this case, there exists a $\phi\geq0$, the so-called power angle, such that $\|A^{it}\|\leq M e^{\phi|t|}$ for all $t\in\mathbb{R}$ and some $M>0$, and we write $A\in\mathcal{BIP}(\phi)$.
\end{definition} 

By a result of Dore and Venni \cite[Theorem 3.2]{DV}, in a UMD Banach space every operator in the class $\mathcal{BIP}(\phi)$ with $\phi<\frac{\pi}{2}$ has maximal $L^q$-regularity.

\subsection{The PME on conic manifolds}\label{PME_cone}

As a cone differential operator, $\Delta$ naturally acts on scales of weighted Mellin-Sobolev spaces $\cH^{s,\gg}_p(\B)$, $s,\gamma \in \R$, $p\in(1,\infty)$. Denoting by $\B^\circ$ the interior of $\B$, for $s\in \N_0$, $\cH^{s,\gg}_p(\B)$ is the space of all $u\in H^s_{p,loc}(\B^\circ)$ for which, in local coordinates near the boundary, 
$$
x^{\frac{n+1}2-\gamma}(x\partial_x)^j\partial_y^\alpha u(x,y)\in L^p\left(\sqrt{\det[h(x)]}{\frac{dx}{x}dy}\right).
$$
For other values of $s$, the space $\cH^{s,\gamma}_p(\B)$ can be obtained by interpolation and duality (see \cite[Definition 3.1]{RS3} for details). Moreover, since the usual Sobolev spaces are UMD, by \cite[Theorem III.4.5.2]{Am}, the Mellin-Sobolev spaces are also UMD. 

\begin{lemma}\label{abr}
\label{c0} Let $1<p,q<\infty$ and $s>\frac{n+1}{p}$. Then:

{\rm (a)} A function $u$ in $\cH^{s,\mu}_p(\B)$, $\mu\in\mathbb{R}$, is continuous on $\B^\circ$, and, in local coordinates $(x,y)\in(0,1)\times\partial\B$ near the boundary, 
\begin{eqnarray*}
|u(x,y)|\le c x^{\mu-\frac{n+1}{2}} \|u\|_{\cH^{s,\mu}_p(\B)}
\end{eqnarray*}
for a constant $c>0$, depending only on $\B$ and $p$. Moreover, for $v\in \cH^{s,\frac{n+1}{2}}_p(\B)$ and $\gamma\in\mathbb{R}$,
$$
\|uv\|_{\cH^{s,\gg}_p(\B)}\le C \|u\|_{\cH^{s,\gg}_p(\B)}\|v\|_{\cH^{s,\frac{n+1}{2}}_p(\B)}
$$ 
for suitable $C>0$. In particular, up to the choice of an equivalent norm, $\cH^{s,\gg}_p(\B)$ is a Banach algebra whenever $\gg\ge \frac{n+1}{2}$. 

{\rm (b)} Multiplication by an element in $\cH^{\gs+\frac{n+1}{q},\frac{n+1}{2}}_q(\B)$, $\gs>0$, defines a bounded map on $\cH^{\rho,\mu}_p(\B)$, $\mu\in\R$, for each $\rho\in(-\gs,\gs)$. 

{\rm (c)} 
If $\gg> \frac{n+1}{2}$and $v\in \mathcal{H}_{p}^{s,\gamma}(\mathbb{B})\oplus{\mathcal E}_0$ is pointwise invertible, then $v^{-1}\in \mathcal{H}_{p}^{s,\gamma}(\mathbb{B})\oplus{\mathcal E}_0$, i.e. 
$\mathcal{H}_{p}^{s,\gamma}(\mathbb{B})\oplus{\mathcal E}_0$ is spectrally invariant in $C(\B)$ and therefore closed under holomorphic functional calculus. 

In this case, if $U$ is a bounded open subset of $\mathcal{H}_{p}^{s,\gamma}(\mathbb{B})\oplus{\mathcal E}_0$ consisting of functions $v$ such that 
$\Re(v)\geq\alpha>0$ for some fixed $\alpha$, then the subset $\{v^{-1}\, |\, v\in U\}$ of $\mathcal{H}_{p}^{s,\gamma}(\mathbb{B})\oplus{\mathcal E}_0$ is also bounded and for any $\nu\in\R$
\begin{gather}\label{poly}
v_{1}^{\nu}-v_{2}^{\nu}=(v_{2}-v_{1})\frac{1}{2\pi i}\int_{\Gamma}(-\lambda)^{\nu}(v_1+\lambda)^{-1}(v_2+\lambda)^{-1}d\lambda \in \mathcal{H}_{p}^{s,\gamma}(\mathbb{B})\oplus{\mathcal E}_0, \quad v_{1}, v_{2}\in U,
\end{gather}
where $\Gamma$ is a fixed finite path around $\cup_{v\in U}\mathrm{Ran}(-v)$ in $\{ \lambda\in\C\, |\, \Re(\lambda)<0\}$.
\end{lemma}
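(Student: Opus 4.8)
The plan is to transport every assertion, near the conical end, to a statement about the scale $H^s_p$ on the half-cylinder $(0,\infty)\times\partial\B$ via the substitution $x=e^{-t}$ together with conjugation by the power weight $x^{(n+1)/2-\mu}$; away from a collar of $\partial\B$ the space $\cH^{s,\mu}_p(\B)$ is locally just $H^s_p$, and there one uses the ordinary Sobolev calculus. With this reduction the pointwise bound in (a) is the Sobolev embedding $H^s_p\hookrightarrow C^0_b$ for $s>(n+1)/p=\dim\B/p$, read off after undoing the conjugation (cf. \cite[Corollary 2.9]{RS2}), and continuity on $\B^\circ$ is the interior embedding. The multiplication estimate in (a) is the weighted form of the classical fact that, for $s>(n+1)/p$, $H^s_p$ is a module over $H^s_p\cap L^\infty$ with the corresponding norm bound: the weight carried by $v$ is exactly the ``neutral'' weight $(n+1)/2$, for which the pointwise bound of (a) has exponent $0$, so multiplication by $v$ leaves the weight $\gamma$ of $u$ unchanged. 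For $\gamma\ge(n+1)/2$ one has the continuous inclusion $\cH^{s,\gamma}_p(\B)\hookrightarrow\cH^{s,(n+1)/2}_p(\B)$ (a larger weight forces faster decay at the tip), and inserting it into the multiplication estimate gives the Banach algebra property up to an equivalent norm. For (b), the case $\rho=0$ is immediate from (a), since $v\in\cH^{\gs+(n+1)/q,(n+1)/2}_q(\B)$ with $\gs+(n+1)/q>(n+1)/q$ lies in $L^\infty$ and has neutral weight, hence multiplies $\cH^{0,\mu}_p(\B)$ boundedly for every $\mu$; for $\rho\in(0,\gs)$ one interpolates this with a Leibniz-type estimate as in (a), the admissible range of $\rho$ being dictated by the smoothness of the multiplier; and for $\rho\in(-\gs,0)$ one passes to the transpose with respect to the natural duality pairing of Mellin--Sobolev spaces, which is again multiplication by $v$ (still of neutral weight) on a space $\cH^{-\rho,\mu'}_{p'}(\B)$ with $-\rho\in(0,\gs)$, hence bounded by the previous case.

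For (c) I would first record that $\cA:=\cH^{s,\gamma}_p(\B)\oplus\cE_0$ is a commutative unital Banach algebra, with unit the constant function $1\in\cE_0$: by (a) (valid since $\gamma>(n+1)/2$) the summand $\cH^{s,\gamma}_p(\B)$ is an algebra, and it absorbs products with $\cE_0$ because an element of $\cE_0$ is locally constant near $\partial\B$, while products of $\cE_0$-elements stay in $\cE_0$. Since $\gamma>(n+1)/2$, the pointwise bound of (a) shows that each element of $\cH^{s,\gamma}_p(\B)$ extends continuously to $\B$ with vanishing boundary trace, so $\cA\hookrightarrow C(\B)$ continuously with the same unit. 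To establish spectral invariance, let $v=e+u\in\cA$ be pointwise invertible, $e\in\cE_0$, $u\in\cH^{s,\gamma}_p(\B)$. Near $\partial\B$ the function $e$ is, on each component, a constant $\beta$, which must be nonzero since $u$ vanishes at $\partial\B$ and $v$ does not; together with compactness of $\B$ this shows $v$ is bounded away from $0$, so $v^{-1}\in C(\B)$. Fix a cutoff $\omega$ supported in a collar on which $e\equiv\beta$ and with $\omega\equiv1$ near $\partial\B$, and write $v^{-1}=(1-\omega)v^{-1}+\omega v^{-1}$. The first summand is supported in a compact subset of $\B^\circ$ where $v\in H^s_p$ is bounded away from $0$, hence lies in $H^s_p$ by the classical inverse-closedness of $H^s_p$ for $s>(n+1)/p$ (composition with $z\mapsto 1/z$, holomorphic near the compact range of $v$ there), and therefore, being supported away from $\partial\B$, in $\cH^{s,\gamma}_p(\B)$.

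For the second summand, on $\mathrm{supp}\,\omega$ we have $v=\beta+u$ and $\omega\beta^{-1}\in\cE_0$, so that $\omega v^{-1}-\omega\beta^{-1}=\omega\,(\Phi\circ u)$ with $\Phi(z)=(\beta+z)^{-1}-\beta^{-1}$ holomorphic near the compact range of $u$ (which avoids $-\beta$) and $\Phi(0)=0$. Thus everything reduces to the superposition statement: if $u\in\cH^{s,\gamma}_p(\B)$ with $s>(n+1)/p$, $\gamma\ge(n+1)/2$, and $\Phi$ is holomorphic near $u(\B)$ with $\Phi(0)=0$, then $\Phi\circ u\in\cH^{s,\gamma}_p(\B)$. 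After the reduction of the first paragraph this is a Faà di Bruno / Moser-type estimate in $H^s_p$ of the cylinder: each term in $(x\partial_x)^j\partial_y^\alpha(\Phi\circ u)$ with $|\alpha|+j\le s$ carries at least one factor that is a derivative of $u$, handled in the weighted $L^p$-norm, the remaining factors being bounded by (a), and the hypothesis $\Phi(0)=0$ is what makes the zeroth-order weight of $\Phi\circ u$ match that of $\cH^{s,\gamma}_p(\B)$. This is the step I expect to be the main obstacle: carrying out the weight bookkeeping so that the neutral weight $(n+1)/2$, the Sobolev threshold $s>(n+1)/p$, and the decay forced by $\gamma>(n+1)/2$ fit together; all other ingredients are the classical Sobolev calculus transported through the Mellin change of variables. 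Granting it, $v^{-1}\in\cA$, i.e. $\cA$ is spectrally invariant in $C(\B)$, hence closed under holomorphic functional calculus.

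Finally, for a bounded $U\subset\cA$ with $\Re(v)\ge\alpha>0$ throughout $U$, the set $\overline{\bigcup_{v\in U}\mathrm{Ran}(-v)}$ is a compact subset of $\{\Re<0\}$ and the lower bound $\Re v\ge\alpha$ is uniform; choosing a finite path $\Gamma$ around it and rerunning the localization argument with $v+\lambda$ in place of $v$ yields a bound for $(v+\lambda)^{-1}$ in $\cA$ that is uniform in $\lambda\in\Gamma$ and $v\in U$. Evaluating at a suitable point gives boundedness of $\{v^{-1}\,|\,v\in U\}$ in $\cA$, while feeding these uniform resolvent bounds into the Dunford integral $v^{\nu}=\frac1{2\pi i}\int_{\Gamma}(-\lambda)^{\nu}(v+\lambda)^{-1}\,d\lambda$, now convergent in $\cA$, together with the resolvent identity $(v_1+\lambda)^{-1}-(v_2+\lambda)^{-1}=(v_2-v_1)(v_1+\lambda)^{-1}(v_2+\lambda)^{-1}$, produces the formula \eqref{poly}; membership of $v_1^{\nu}-v_2^{\nu}$ in $\cH^{s,\gamma}_p(\B)\oplus\cE_0$ is then immediate since $\cA$ is an algebra and the integrand is $\cA$-valued.
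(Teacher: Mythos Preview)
The paper does not prove this lemma at all: its entire proof reads ``This is \cite[Lemma 3.2]{RS3}, \cite[Corollary 3.3]{RS3}, \cite[Lemma 6.2]{RS3}, \cite[Lemma 6.3]{RS3} and \cite[(6.19)]{RS3}.'' So there is nothing in the present paper to compare your argument against; the content lives entirely in the companion paper \cite{RS3}. Your sketch is along the lines one would expect those references to follow---the Mellin change of variables $x=e^{-t}$ together with conjugation by the weight $x^{(n+1)/2-\mu}$, which reduces Mellin--Sobolev statements to ordinary Sobolev statements on a half-cylinder, followed by the standard embedding, multiplication, and composition theorems for $H^s_p$---and is essentially sound.

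One point deserves a caveat. In your treatment of (b), the interpolation for $\rho\in(0,\sigma)$ needs an upper endpoint, and you gesture at ``a Leibniz-type estimate as in (a)''. But (a) concerns multipliers in $\cH^{s,(n+1)/2}_p(\B)$ with the \emph{same} integrability index $p$ and the \emph{same} smoothness index $s$ as the target space, whereas in (b) the multiplier lives in a $q$-based space with smoothness $\sigma+(n+1)/q$ and must act on $p$-based spaces of smoothness $\rho<\sigma$. So (a) does not furnish the upper endpoint directly; one needs a genuine mixed-integrability multiplication theorem (on the cylinder side, something of the form $H^{\sigma+(n+1)/q}_q\cdot H^\rho_p\hookrightarrow H^\rho_p$ for $0\le\rho<\sigma$), which is precisely what \cite[Corollary 3.3]{RS3} supplies. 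Your interpolation/duality skeleton is correct, but the endpoint input must come from a sharper bilinear estimate than (a) alone.

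The rest of your outline---the Banach-algebra structure of $\cH^{s,\gamma}_p(\B)\oplus\cE_0$, the spectral-invariance argument via localization and the superposition lemma for $\Phi\circ u$ with $\Phi(0)=0$, and the uniform resolvent bound on $\Gamma$ feeding into the Dunford integral together with the resolvent identity to obtain \eqref{poly}---is correct and matches what one finds in \cite[Lemmas 6.2--6.3 and (6.19)]{RS3}.
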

\begin{proof}
This is \cite[Lemma 3.2]{RS3}, \cite[Corollary 3.3]{RS3}, \cite[Lemma 6.2]{RS3}, \cite[Lemma 6.3]{RS3} and \cite[(6.19)]{RS3}.
\end{proof}

We will consider first the Laplacian associated with a straight conical metric as an unbounded operator in $\cH^{s,\gg}_p(\B)$. 
In this case, the metric $h$ in \eqref{g} is constant in $x$ and we write $\Delta_{h(0)} = \Delta_\partial$ for the boundary Laplacian.
It is well known, see e.g. \cite{RS3}, that the domain of the minimal extension of $\Delta$, i.e. the closure of $\Delta$ 
considered as an operator on $C^\infty_c(\B^\circ)$, differs from that of the maximal extension, 
which consists of all $u\in \cH^{s,\gg}_p(\B)$ such that $\Delta u \in \cH^{s,\gg}_p(\B)$, by a finite dimensional space $\cE$. 
In order to understand this space, recall that the conormal symbol	
$\sigma_M(\Delta)$ of $\Delta$ is the operator-valued function 
\begin{eqnarray}\label{eq:conormal}
\sigma_M(\Delta)(z) = z^2-(n-1)z + \Delta_\partial: H_2^2(\partial \B)\to L^2(\partial \B), \qquad z\in \C .
\end{eqnarray}
Denoting by $0=\lambda_0>\lambda_1>\ldots$ the different eigenvalues of $\Delta_\partial$, 
we see that $\sigma_M(\Delta)$ is invertible for all $z\not=q_j^\pm$, where 
$$q_j^\pm = \frac{n-1}2\pm \sqrt{\left(\frac{n-1}2\right)^2-\lambda_j}.$$
The $q_j^\pm$ are simple poles of $\sigma_M(\Delta)^{-1}$ except for the case where $n=1$ and $j=0$, when $q_0^+=q_0^-=0$ is a double pole. 

In the sequel we shall assume that none of the $q_j^\pm$ equals $(n-3)/2-\gamma$. 
Then the minimal domain of $\Delta$ in 
$\cH^{s,\gamma}_p(\B)$ can be shown to be $\cH^{s+2,\gamma+2}_p(\B)$, see e.g. \cite[Proposition 2.3]{Sh}. 
The elements of $\cE$ are linear combinations of functions of the form 
$\omega(x) c_\rho(y)x^{-\rho}$,
where $\omega\in C^\infty_c([0,1))$ is a cut-off function, i.e. $\omega \ge0$ and $\go(x)=1$ for $x$ close to zero, the exponents $\rho$ run over all $q_j^\pm$ in the interval 
$$I_\gamma= \Big(\frac{n-3}{2}-\gamma, \frac{n+1}{2}-\gamma\Big),$$ 
and, for $\rho=q_j^\pm$, the $c_\rho\in C^{\infty}(\partial\mathbb{B})$ are elements of the eigenspace of $\Delta_\partial$ associated with the eigenvalue $\lambda_j$. 
If $n=1$ and $0\in I_\gamma$, then $\cE$ also contains terms of the form 
$\omega(x) c_0(y)\log(x)$, with $c_0$ in the zero eigenspace of $\Delta_\partial$. 
Therefore, every closed extension of $\Delta$ has a domain $\cH^{s+2,\gg+2}_p(\B)\oplus \underline \cE$ with a subspace $\underline \cE$ of $\cE$. 

In our specific situation, fix $\gamma$ as in \eqref{gamma}. The interval $I_\gamma$ will then include the pole $0=q_0^-$ (but none of the $q_j^-$ for $j\ge1$), 
and the space $\cE$ will contain 
the space $\mathcal E_0$ of all smooth functions which are locally constant near 
$\partial \B$. 
As in \cite{RS3} we therefore focus on the closed extension $\underline \Delta_s$ with the domain 
\begin{eqnarray}\label{dds}
\cD(\underline \Delta_s) = \cH^{s+2,\gg+2}_p(\B) \oplus \mathcal E_0.
\end{eqnarray}

For $s\in \R$, we let 
\begin{eqnarray}\label{Xs}
X^s_0 = \cH^{s,\gg}_p(\B)\quad \text{ and } \quad X^s_1 = \cD(\underline\gD_s) = \cH^{s+2,\gg+2}_p(\B)\oplus \mathcal E_0 .
\end{eqnarray}
Moreover, we introduce the real interpolation space $X^s_{\frac{1}{q},q}= (X_1^s,X_0^s)_{\frac{1}{q},q}$.
Here $1<p,q<\infty$ are taken so large that \eqref{pq1} holds 
and $s\in\mathbb{R}$ satisfies
\begin{gather}\label{sigma}
s>-1+\frac{n+1}{p}+\frac{2}{q}.
\end{gather}
This implies that, for all sufficiently small $\gve>0$, 
\begin{eqnarray}\label{int}
X^s_{\frac{1}{q},q}\hookrightarrow \mathcal{H}_{p}^{s+2-\frac{2}{q}-\varepsilon,\gamma+2-\frac{2}{q}-\varepsilon}(\mathbb{B})\oplus\mathcal{E}_0\hookrightarrow C(\B).
\end{eqnarray}
The first embedding was shown in \cite[Lemma 5.2]{RS3}; the second follows from Lemma \ref{abr}. 

We start the analysis of the PME with the following maximal $L^q$-regularity result. 

\begin{theorem}\label{elldomain2}
Let $s\in \R$, $\gt\in[0,\pi)$ and $\phi>0$. For $\gg$ satisfying \eqref{gamma}, let $\underline{\Delta}_{s}$ be the extension in $\cH^{s,\gg}_p(\B)$ with domain \eqref{dds}.
Then $c-\underline \Delta_s\in \cP(\gt)\cap\mathcal{BIP}(\phi)$ for suitably large $c>0$. In particular, $-\underline \Delta_{s} $ has maximal $L^q$-regularity.
\end{theorem}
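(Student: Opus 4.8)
The plan is to establish the two membership statements $c-\underline\Delta_s\in\cP(\theta)$ and $c-\underline\Delta_s\in\mathcal{BIP}(\phi)$ separately, with the first following from a resolvent estimate and the second from a perturbation argument reducing the warped case to the straight case; maximal $L^q$-regularity is then immediate from the Dore--Venni theorem quoted in the excerpt, since all Mellin--Sobolev spaces are UMD. I first recall that for a cone differential operator the relevant machinery --- parameter-dependent ellipticity, ellipticity of the conormal symbol on the weight line $\mathrm{Re}(z)=\tfrac{n+1}{2}-\gamma$ (here ensured by the hypothesis on $\gamma$, which keeps all the $q_j^\pm$ off that line, and by the assumption that none of the $q_j^\pm$ equals $(n-3)/2-\gamma$), and ellipticity of the exit/edge symbol at the cone tip --- yields, via the cone pseudodifferential calculus, that for $c>0$ large enough the shifted operator $c-\underline\Delta_s$ with the fixed closed extension \eqref{dds} has resolvent $(c-\underline\Delta_s+\lambda)^{-1}$ defined and bounded on $X_0^s=\cH^{s,\gamma}_p(\B)$ for all $\lambda$ in a sector $S_\theta$ of angle $\theta<\pi$, with the bound $(1+|\lambda|)\|(c-\underline\Delta_s+\lambda)^{-1}\|\le K$. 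This is exactly the sectoriality statement $c-\underline\Delta_s\in\cP(\theta)$; in fact this has essentially already been carried out in \cite{RS3}, so for $\cP(\theta)$ I would simply invoke the corresponding result there, noting that the choice of $\gamma$ in \eqref{gamma} is compatible with the hypotheses and that the extension with domain \eqref{dds} is admissible because $0=q_0^-\in I_\gamma$ while no other $q_j^-$ lies in $I_\gamma$.

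For the bounded-imaginary-powers claim I would proceed in two steps. In the straight case ($h$ constant in $x$, $\Delta_\partial=\Delta_{h(0)}$) the conormal symbol \eqref{eq:conormal} has constant coefficients and one has an explicit functional calculus: one writes $\underline\Delta_s$ via the Mellin transform in $x$ together with the spectral decomposition of $\Delta_\partial$ on the cross-section, so that the imaginary powers $(c-\underline\Delta_s)^{it}$ can be represented by an explicit Mellin multiplier plus a contribution from the finite-dimensional piece $\mathcal E_0$, and one reads off the estimate $\|(c-\underline\Delta_s)^{it}\|\le M e^{\phi|t|}$ with power angle $\phi$ that can be made less than any prescribed positive number by taking $c$ large; the boundedness on the full scale $\cH^{s,\gamma}_p(\B)$ for all $s\in\R$ follows by interpolation and duality once it is known for $s\in\N_0$. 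This is again a result available from \cite{RS3}. For the general warped case I would treat $\Delta$ as a perturbation of its straight model $\Delta_0$ obtained by freezing $h$ at $x=0$: from \eqref{delta} the difference $\Delta-\Delta_0$ has coefficients vanishing as $x\to0$, so $x^2(\Delta-\Delta_0)$ is a cone differential operator of order $\le 2$ whose coefficients carry an extra factor $x$, hence $(\Delta-\Delta_0):X_1^s\to X_0^{s}$ is, after the shift, of lower order \emph{at the tip} relative to $c-\underline{(\Delta_0)}_s$. One then uses the perturbation theorem for $\mathcal{BIP}$ (Amann \cite[III.4.7]{Am}, or Pr\"uss--Sohr-type results): if $A\in\mathcal{BIP}(\phi)$ with $\phi<\pi/2$ and $B$ is $A$-bounded with sufficiently small relative bound, then $A+B\in\mathcal{BIP}(\phi')$ for some $\phi'<\pi/2$. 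Taking $c$ large makes the relative bound of the perturbation small and keeps the power angle below $\pi/2$, giving $c-\underline\Delta_s\in\mathcal{BIP}(\phi)$ with $\phi<\pi/2$.

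Having both $c-\underline\Delta_s\in\cP(\theta)$ (for any preassigned $\theta<\pi$, in particular $\theta>\pi/2$) and $c-\underline\Delta_s\in\mathcal{BIP}(\phi)$ with $\phi<\pi/2$, the Dore--Venni theorem recalled just above gives that $c-\underline\Delta_s$ has maximal $L^q$-regularity on the UMD space $\cH^{s,\gamma}_p(\B)$; since maximal $L^q$-regularity is stable under bounded additive perturbations (equivalently, under the shift by $-c$, which only affects lower-order data on a finite interval), $-\underline\Delta_s$ itself has maximal $L^q$-regularity, as claimed.

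The step I expect to be the main obstacle is controlling the warped perturbation $\Delta-\Delta_0$ \emph{uniformly on the whole Banach scale} indexed by $s$, i.e. verifying that it is relatively bounded with respect to $c-\underline{(\Delta_0)}_s$ with a bound that is small uniformly in $s$ (so that the resulting power angle and $\mathcal{BIP}$ constants do not degenerate as $s$ varies, which matters for the later smoothing arguments on Banach scales). Concretely, one must check that the extra $x$-factor in the coefficients of $\Delta-\Delta_0$ genuinely buys a gain relative to the domain $\cH^{s+2,\gamma+2}_p(\B)\oplus\mathcal E_0$ --- including on the finite-dimensional summand $\mathcal E_0$, where $\Delta$ does not act as zero in the warped case --- and that this gain is encoded as an $\varepsilon$-small relative bound after shifting by a large constant $c$. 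Everything else is either a direct citation of \cite{RS3} or a routine application of the abstract Dore--Venni and perturbation theorems.
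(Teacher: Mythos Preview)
Your proposal is correct and aligned with the paper's treatment: the paper does not give a self-contained proof but simply cites \cite[Theorem 4.2]{RS3} (resting in turn on \cite{CSS2}, \cite[Theorem 2.9]{RS1}, \cite[Theorem 3.3]{RS2}, \cite[Theorem 5.7]{Sh}), and the strategy you outline --- sectoriality from parameter-ellipticity in the cone calculus, BIP first for the straight model cone and then by perturbation, Dore--Venni for maximal regularity --- is exactly what those references implement. The only organizational difference is that in the paper Theorem~\ref{elldomain2} is stated in the straight-cone setting (note the sentence ``We will consider first the Laplacian associated with a straight conical metric'' preceding \eqref{eq:conormal}), and the warped extension is deferred to the subsequent Remark citing \cite[Theorem 5.6]{RS2} and \cite[Theorem 4.1]{Ro2}; you fold both cases into one argument, which is harmless.
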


This is \cite[Theorem 4.2]{RS3}. It follows from \cite[Theorem 3.11]{RS3} and \cite[Theorem 4.1]{RS3} and is based on \cite{CSS2}, \cite[Theorem 2.9]{RS1}, \cite[Theorem 3.3]{RS2} and \cite[Theorem 5.7]{Sh}. From Theorem \ref{elldomain2} we derived in \cite[Theorem 6.1]{RS3}:

\begin{theorem}\label{6.1}Let $s$, $\gamma$, $p$ and $q$ be chosen as in \eqref{gamma}, \eqref{pq1} and \eqref{sigma}. Let $v\in X^s_{\frac{1}{q},q}$ be strictly positive on $\B$. Then, for every $\theta\in [0,\pi)$ there exists a $c>0$ such that the operator 
$$c-v\underline\Delta_s: X_1^s\to X_0^s$$ 
is $R$-sectorial of angle $\theta$, where $\underline\Delta_s$ is the Laplacian \eqref{dds}. In particular, $-v\underline\Delta_s$ has maximal $L^q$-regularity.
\end{theorem}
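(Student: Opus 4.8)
The plan is to show that, for the given $\theta\in[0,\pi)$ and a suitably large $c>0$, one has $c-v\underline\Delta_s\in\cP(\theta)$ and that the family $\{\lambda(\lambda+c-v\underline\Delta_s)^{-1}:\lambda\in S_\theta\}$ is $R$-bounded in $\mathcal L(X_0^s)$; since $X_0^s=\cH^{s,\gg}_p(\B)$ and $X_1^s=\cH^{s+2,\gg+2}_p(\B)\oplus\mathcal E_0$ are UMD spaces, the maximal $L^q$-regularity then follows from the Kalton--Weis theorem. Two facts are used repeatedly. First, by Theorem \ref{elldomain2} the operator $A:=c_0-\underline\Delta_s$ has, for $c_0$ large, bounded imaginary powers of arbitrarily small power angle, and since the underlying spaces are UMD this upgrades to $R$-sectoriality of $A$ of every angle $<\pi$ (cf. \cite{KW1}). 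Second, by \eqref{int} we have $v\in\cH_p^{s+2-2/q-\gve,\gg+2-2/q-\gve}(\B)\oplus\mathcal E_0\hookrightarrow C(\B)$ with $v>0$ on $\B$, so that, by Lemma \ref{abr} (using also \eqref{pq1}), multiplication by $v$ and by $v^{-1}$ are bounded on $X_0^s$; in particular $v\underline\Delta_s$ with domain $X_1^s$ is closed and densely defined. Writing $c-v\underline\Delta_s=vA+(c-c_0v)$, where $c-c_0v$ is a bounded multiplication operator, hence $\underline\Delta_s$-bounded with relative bound $0$, and recalling that enlarging the shift absorbs such a perturbation of an $R$-sectorial operator without changing the sectoriality angle, it suffices to prove:

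\emph{Claim.} If $A=c_0-\underline\Delta_s$ is $R$-sectorial of angle $\theta<\pi$, then so is $vA$ with domain $X_1^s$, after a further shift if necessary.

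I would prove the Claim by localizing at the conical tips. Decompose $v=v_{\mathcal E_0}+v_{\mathcal H}$ with $v_{\mathcal E_0}\in\mathcal E_0$ locally constant near $\partial\B$ and $v_{\mathcal H}$ in the Mellin--Sobolev summand. The assumption $\gg>\frac{n-3}2+\frac2q$ makes the weight $\gg+2-2/q-\gve$ exceed $\frac{\dim\B}2$ for small $\gve$, so Lemma \ref{abr}(a) gives $|v_{\mathcal H}(x,y)|\le c\,x^{\kappa}\|v_{\mathcal H}\|$ for some $\kappa>0$; thus $v\to v_0:=v_{\mathcal E_0}|_{\partial\B}>0$ at the tips, and cutting off deep into the collar makes the multiplier norm on $X_0^s$ of $v-v_0$ arbitrarily small. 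Fix a cut-off $\go$ supported in a small collar $[0,\gd)\times\partial\B$ with $\go\equiv1$ on $[0,\gd/2)\times\partial\B$, and set $v_{\mathrm{tip}}:=\go v+(1-\go)v_0$. Then $\lambda+v_{\mathrm{tip}}A=(I+Q_\lambda)(\lambda+v_0A)$ with $Q_\lambda=(v_{\mathrm{tip}}-v_0)A(\lambda+v_0A)^{-1}\in\mathcal L(X_0^s)$; since $v_0A$ is a positive scalar multiple of $A$, the families $\{\lambda(\lambda+v_0A)^{-1}\}$ and $\{v_0A(\lambda+v_0A)^{-1}\}$ are $R$-bounded on $S_\theta$, so $\{Q_\lambda\}$ is $R$-bounded with $R$-bound as small as we like (by shrinking $\gd$), $(I+Q_\lambda)^{-1}=\sum_k(-Q_\lambda)^k$ is an $R$-bounded family, and hence $\lambda(\lambda+v_{\mathrm{tip}}A)^{-1}=\lambda(\lambda+v_0A)^{-1}(I+Q_\lambda)^{-1}$ is $R$-bounded: $v_{\mathrm{tip}}A$ is $R$-sectorial of angle $\theta$. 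Away from the tips, on $\{x\ge\gd/4\}$, the operator $\underline\Delta_s$ is a uniformly elliptic second order operator with smooth coefficients and $v$ is a continuous positive function, so the classical $R$-sectoriality results for elliptic operators with continuous coefficients (applied on a closed manifold into which $\{x\ge\gd/4\}$ embeds) provide an $R$-sectorial operator $v_{\mathrm{int}}A$ agreeing with $vA$ there. Finally I glue: from $\go(\lambda+c-v_{\mathrm{tip}}\underline\Delta_s)^{-1}\go+(1-\go)(\lambda+c-v_{\mathrm{int}}\underline\Delta_s)^{-1}(1-\go)$ one builds a parametrix for $\lambda+c-v\underline\Delta_s$ on $S_\theta$, whose error terms contain the commutators $[\underline\Delta_s,\go]$; these are of first order, hence $\underline\Delta_s$-bounded with relative bound made small on $S_\theta$ by enlarging $c$, so they are absorbed, giving $S_\theta\subset\rho(v\underline\Delta_s-c)$ together with the required $R$-bound.

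The principal obstacle is this gluing step together with the uniformity of the commutator estimates across the whole sector: one must verify that each cut-off commutator with $\underline\Delta_s$ is genuinely lower order \emph{uniformly} in $\lambda\in S_\theta$, so that it can be absorbed after enlarging the shift without destroying $R$-boundedness, and that the purely interior contribution --- where the conical degeneracy plays no role --- is indeed covered by the classical theory for elliptic operators with merely continuous coefficients. Once these uniform, $R$-bounded building blocks are in place, the remaining functional-calculus bookkeeping is routine.
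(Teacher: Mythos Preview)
Your overall strategy---perturb from the boundary constant near the tips, handle the interior via classical elliptic $R$-sectoriality, glue with cut-offs and absorb commutator errors by enlarging the shift---is exactly the freezing-of-coefficients argument the paper sketches (and carries out in \cite{RS3}). Two concrete points, however, need correction.

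First, the parametrix $\omega R_{\mathrm{tip}}\omega + (1-\omega)R_{\mathrm{int}}(1-\omega)$ is wrong as written: applying $\lambda+c-v\underline\Delta_s$ from the left produces $\omega^{2}+(1-\omega)^{2}$ as leading term, and the defect $1-\omega^{2}-(1-\omega)^{2}=2\omega(1-\omega)$ is a fixed nonzero multiplication operator that does \emph{not} become small as $c\to\infty$, so the Neumann series does not converge. The standard fix is to take a partition of unity $\phi_{1}+\phi_{2}=1$ together with cut-offs $\psi_{j}$ satisfying $\psi_{j}\equiv1$ on $\mathrm{supp}\,\phi_{j}$ and $\psi_{j}$ supported where $v$ agrees with the localized coefficient, and to use $\psi_{1}R_{\mathrm{tip}}\phi_{1}+\psi_{2}R_{\mathrm{int}}\phi_{2}$; then the leading term is $\phi_{1}+\phi_{2}=1$ and only commutator terms $[v\underline\Delta_{s},\psi_{j}]R_{j}\phi_{j}$ remain, which are genuinely first order and absorbed by the shift.

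Second, the paper treats $s=0$ first for a reason you gloss over: on $X_{0}^{0}=\cH_{p}^{0,\gg}(\B)$ the multiplier norm \emph{equals} the sup norm, so smallness of $v_{\mathrm{tip}}-v_{0}$ as a multiplier follows directly from $v_{\mathcal H}(x,y)=O(x^{\kappa})$. For $s>0$ the multiplier norm on $\cH_{p}^{s,\gg}(\B)$ involves higher Mellin--Sobolev regularity of the multiplier, and it is not automatic that $\omega_{\delta}(v-v_{0})$ becomes small in that norm as $\delta\to0$ (the derivatives of the cut-off enter). The paper sidesteps this by establishing the result for $s=0$ and then passing to arbitrary $s$ via induction and interpolation; your direct approach for general $s$ would need either this reduction or an explicit interpolation argument between $s=0$ (where smallness holds) and a higher level (where only boundedness is available from Lemma~\ref{abr}).
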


By \eqref{int} the assumption that $v\in X^s_{\frac{1}{q},q}$ implies that 
\begin{eqnarray}\label{6.1.a}
v\in \cH^{s+2-\frac{2}{q}-\gve,\gg+2-\frac{2}{q}-\varepsilon}_q(\B)\oplus \mathcal E_0\quad \text{ for small }\quad \gve>0,
\end{eqnarray} 
and it is only this regularity which is needed. It implies that $v$ is continuous on $\B$, locally constant on $\partial \B$ and defines a multiplier on $X_0^s$. 

The proof of Theorem \ref{6.1} was based on the following idea: The statement is easily seen to be true when $v$ is constant. 
For $s=0$, by standard perturbation results for $R$-sectoriality it is also true if $v$ differs from a constant by a function whose supremum norm (which is the norm as a multiplier on $X_0^0$) is small. 
The proof then relied on a freezing-of-coefficients type argument, where one covers the manifold $\B$ by neighborhoods $B_j$ of given points $z_j$, $j=0, \ldots, N$, and considers the case, where $v$ is replaced by a function $v_j$ equal to $v$ near $z_j$ and equal to $v(z_j)$ outside the neighborhood. 
In this construction, each component of the boundary $\partial \B$ plays the role of a single point, and the corresponding neighborhood is a collar neighborhood of this boundary component. 
It is crucial for the above perturbation result that the neighborhoods are so small that $v_j$ differs from $v(z_j)$ by a function whose supremum norm is sufficiently small. 
Using a partition of unity, the full resolvent of $c-v\underline\Delta_0$ is finally constructed in terms of a Neumann series, and $R$-sectoriality is shown by using this representation. We proceeded then by induction and interpolation for the rest of the values of $s$. 

\begin{remark}\label{REXT}
It is easily seen that the proof of \cite[Theorem 6.1]{RS3} also shows the following. For every $\theta\in [0,\pi)$ there is a $c>0$ such that the operator $c-v\underline\Delta_s: X_1^s\to X_0^s$ is $R$-sectorial of angle $\theta$, provided that one of the following conditions holds:
\begin{itemize}
\item[(a)] $s\in\mathbb{N}$, $\gamma$ is chosen as in \eqref{gamma}, $p$, $q$ satisfy
$$
\frac{n+1}p+\frac2q<2 \quad\text{and}\quad \gamma>\frac{n-3}{2}+\frac{2}{q}
$$
and 
$$
v\in \bigcap_{\varepsilon>0}\cH^{s+2-\frac{2}{q}-\gve,\gg+2-\frac{2}{q}-\varepsilon}_p(\B)\oplus \mathcal E_0
$$ 
is strictly positive on $\B$. 
\item[(b)] $s\geq0$, $\gamma$, $p$, $q$ are chosen as \eqref{gamma} and \eqref{pq1} and 
\begin{eqnarray*}
v\in \bigcap_{\varepsilon>0}\cH^{s+1-\frac{2}{q}-\gve,\gg+2-\frac{2}{q}-\varepsilon}_p(\B)\oplus \mathcal E_0
\end{eqnarray*} 
is strictly positive on $\B$.
\end{itemize}
\end{remark}

By using Theorem \ref{6.1} and a theorem by Cl\'ement and Li, see \cite[Theorem 2.1]{CL}, in \cite[Theorem 1.1]{RS3} the following result was established:

\begin{theorem}[Short time solution]\label{sts} 
Let $s, \gamma, p$ and $q$ as in \eqref{gamma}, \eqref{pq1} and \eqref{sigma}, and let $u_0\in X^s_{\frac{1}{q},q}$ be strictly positive on $\B$. 
Moreover, let $f=f(\lambda,t)$ be a holomorphic function of $\lambda$ in a neighborhood of the range of $u_0$ with values in Lipschitz functions in $t\in [0,T_0]$, for some $T_{0}>0$. 
Then there exists some $T>0$ and a unique 
\begin{eqnarray*}\label{sts.1}
u\in W^{1,q}(0,T;X_0^s)\cap L^{q}(0,T;X_1^s)
\end{eqnarray*}
solving the Cauchy problem for the inhomogeneous porous medium equation 
$$u'(t) -\Delta (u^m(t)) = f(u(t),t), \quad u(0) = u_0.$$
\end{theorem}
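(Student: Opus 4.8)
The plan is to reduce \eqref{e1}--\eqref{e2} (with inhomogeneity $f(u(t),t)$) to an abstract quasilinear Cauchy problem whose linearization is of the type treated in Theorem~\ref{6.1}, and then to apply the quasilinear maximal $L^q$-regularity theorem of Cl\'ement and Li \cite[Theorem~2.1]{CL}. The reduction is the classical substitution $w=u^m$. If $u$ is a strictly positive function in the class $W^{1,q}(0,T;X_0^s)\cap L^q(0,T;X_1^s)$, then, by Lemma~\ref{abr}(c) and the Dunford representation \eqref{poly}, $w=u^m$ lies in the same class and conversely $u=w^{\frac1m}$; moreover $w(0)=u_0^m\in X^s_{\frac1q,q}$ (cf.\ \cite{RS3}). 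A direct computation shows that $u$ solves $u'-\Delta(u^m)=f(u,\cdot)$, $u(0)=u_0$, if and only if
\[
w'(t)-m\,w(t)^{1-\frac1m}\underline\Delta_s w(t)=m\,w(t)^{1-\frac1m}f\big(w(t)^{\frac1m},t\big),\qquad w(0)=u_0^m.
\]
Fixing $c>0$ large and writing this as $w'+A(w)w=G(w,\cdot)$ with $A(v):=c-m\,v^{1-\frac1m}\underline\Delta_s$ and $G(v,t):=cv+m\,v^{1-\frac1m}f\big(v^{\frac1m},t\big)$ casts the problem into the form required by \cite[Theorem~2.1]{CL}.

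I would then fix a small ball $U\subset X^s_{\frac1q,q}$ around $w_0=u_0^m$ consisting of functions $v$ with $\Re v\ge\alpha>0$ and with range in a fixed compact subset of the common domain of holomorphy of $\lambda\mapsto\lambda^{\pm(1-\frac1m)}$, $\lambda\mapsto\lambda^{\frac1m}$ and $\lambda\mapsto f(\lambda^{\frac1m},t)$; this is possible since $X^s_{\frac1q,q}\hookrightarrow C(\B)$ by \eqref{int}. On $U$ one checks the hypotheses of \cite[Theorem~2.1]{CL}: first, $v\mapsto A(v)$ is Lipschitz from $U$ into $\cL(X_1^s,X_0^s)$ --- by Lemma~\ref{abr}(c) and \eqref{poly} the map $v\mapsto v^{1-\frac1m}$ sends $U$ into a bounded subset of, and is Lipschitz into, $\cH^{s+2-\frac2q-\gve,\gg+2-\frac2q-\gve}_p(\B)\oplus{\mathcal E}_0$ for small $\gve>0$, and by Lemma~\ref{abr}(a) multiplication by such functions is bounded on $X_0^s$, so $A(v_1)-A(v_2)=-m\big(v_1^{1-\frac1m}-v_2^{1-\frac1m}\big)\underline\Delta_s$ is controlled in $\cL(X_1^s,X_0^s)$; second, $A(w_0)=c-m\,u_0^{m-1}\underline\Delta_s$ has maximal $L^q$-regularity for $c$ large, which is exactly Theorem~\ref{6.1} applied to the strictly positive coefficient $m\,u_0^{m-1}$ (or Remark~\ref{REXT}); third, $v\mapsto G(v,t)$ maps $U$ into a bounded subset of $X_0^s$ and is Lipschitz on bounded sets, and $G(v,\cdot)$ is Lipschitz in $t\in[0,T_0]$ --- here one uses that $\cH^{s+2-\frac2q-\gve,\gg+2-\frac2q-\gve}_p(\B)\oplus{\mathcal E}_0$ is a Banach algebra (its weight exceeds $\tfrac{n+1}2$ by \eqref{pq1}) embedding into $X_0^s$, that $v\mapsto f(v^{\frac1m},t)$ is represented by a contour integral as in \eqref{poly}, and that $f(\lambda,\cdot)$ is Lipschitz uniformly for $\lambda$ on that (compact) contour; in particular $G(v,\cdot)\in L^q(0,T_0;X_0^s)$.

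With these hypotheses verified, \cite[Theorem~2.1]{CL} yields a $T\in(0,T_0]$ and a unique $w\in W^{1,q}(0,T;X_0^s)\cap L^q(0,T;X_1^s)$ solving the quasilinear problem; shrinking $T$ and using $W^{1,q}(0,T;X_0^s)\cap L^q(0,T;X_1^s)\hookrightarrow C([0,T];X^s_{\frac1q,q})\hookrightarrow C([0,T]\times\B)$, we may assume $w(t)\in U$ and $w>0$ throughout. Set $u:=w^{\frac1m}$. Again by Lemma~\ref{abr}, $v\mapsto v^{\frac1m}$ maps bounded subsets of positive functions of $X_1^s$ boundedly into $X_1^s$, so $u\in L^q(0,T;X_1^s)$; and since $u'=\tfrac1m w^{\frac1m-1}w'$ with $w^{\frac1m-1}$ a multiplier on $X_0^s$ of norm bounded uniformly in $t$ (Lemma~\ref{abr}(a), as $w\in C([0,T];X^s_{\frac1q,q})$), we get $u\in W^{1,q}(0,T;X_0^s)$ with $u(0)=u_0$. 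Multiplying the equation for $w$ by $\tfrac1m w^{\frac1m-1}$ recovers $u'-\Delta(u^m)=f(u,\cdot)$. For uniqueness, any solution $\tilde u$ in the stated class is, by the same embedding, continuous with values in $C(\B)$ and $\tilde u(0)=u_0>0$, hence strictly positive for small times; then $\tilde w:=\tilde u^m\in W^{1,q}\cap L^q$ solves the quasilinear problem with data $w_0$, so by the uniqueness part of \cite[Theorem~2.1]{CL} it coincides with $w$ on a common subinterval, whence $\tilde u=u$ there.

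The main obstacle is not the generation property --- that is handed to us by Theorem~\ref{6.1} --- but the superposition estimates: that $v\mapsto v^{\nu}$ (for the relevant real $\nu$) and $v\mapsto f(v^{\frac1m},\cdot)$ act, and are locally Lipschitz, on the Mellin--Sobolev scale \emph{and} on the real interpolation space $X^s_{\frac1q,q}$ (in particular $u_0^m,\,m\,u_0^{m-1}\in X^s_{\frac1q,q}$), and that the coefficient differences $v_1^{1-\frac1m}-v_2^{1-\frac1m}$ are small as multipliers $X_1^s\to X_0^s$. This is exactly where Lemma~\ref{abr} together with the Dunford-integral identity \eqref{poly} are used: they reduce the nonlinear estimates to linear multiplier estimates on the weighted Sobolev spaces, for which \eqref{pq1} supplies the needed room in the smoothness and weight parameters.
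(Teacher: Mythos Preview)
Your proposal is correct and follows essentially the same route as the paper: reformulate the equation in quasilinear form, invoke Theorem~\ref{6.1} (resp.\ Remark~\ref{REXT}) for maximal $L^q$-regularity of the frozen linearization, use Lemma~\ref{abr} and the Dunford identity \eqref{poly} for the Lipschitz/multiplier estimates, and apply the Cl\'ement--Li theorem. The paper itself does not reprove Theorem~\ref{sts} but cites \cite[Theorem~1.1]{RS3}; the variant you chose---passing to $w=u^m$ first and then transferring back---is precisely the combination of the paper's Theorem~\ref{t22} with the equivalence argument in Remark~\ref{equivalence}, so the ingredients are identical.
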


\begin{remark} 
By \cite[Theorem 5.6]{RS2} $($or \cite[Theorem 4.1]{Ro2}$)$, the maximal $L^q$-regularity result of Theorem \ref{elldomain2} extends to the case of manifolds with warped conical singularities, i.e to the case when the metric $h$ in \eqref{g} depends $x$. 
The only difference is that $\lambda_{1}$ in \eqref{epsilon1} refers then to the greatest nonzero eigenvalue of the Laplacian $\Delta_{h(0)}$ on $\partial\mathbb{B}$. Moreover, the proofs of \cite[Theorem 6.1]{RS3} and \cite[Theorem 6.5]{RS3} and therefore Theorem \ref{sts} extend immediately to this case as well. 
Therefore, in the sequel we will consider always the general case of warped conical singularities. 
\end{remark}

Under the change of variables $u^{m}=w$, we obtain from \eqref{e1}-\eqref{e2} the following equivalent equations
\begin{eqnarray}\label{v1}
w'(t)-mw^{\frac{m-1}{m}}(t)\Delta w(t)&=&0, \quad t>0,\\\label{v2}
w(0)&=&w_0 = u_{0}^{m}.
\end{eqnarray}
Similarly to \cite[Theorem 6.5]{RS3}, we have the following maximal regularity result for the above problem.

\begin{theorem}\label{t22}
Let $s$, $\gamma$, $p$ and $q$ be chosen as in \eqref{gamma}, \eqref{pq1}, \eqref{sigma} and $w_{0}\in X_{\frac{1}{q},q}^{s}$ such that $w_{0}\ge c>0$ on $\mathbb{B}$. Then, there exists some $T>0$ and a unique
$$
w\in W^{1,q}(0,T;X_0^s)\cap L^{q}(0,T;X_1^s)
$$
solving \eqref{v1}-\eqref{v2}.
\end{theorem}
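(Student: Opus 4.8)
The plan is to recast \eqref{v1}--\eqref{v2} as an abstract quasilinear Cauchy problem in $X_0^s$ and to apply the existence-uniqueness theorem of Cl\'ement and Li, \cite[Theorem 2.1]{CL}, just as in the proof of \cite[Theorem 6.5]{RS3} that underlies Theorem \ref{sts}. For a strictly positive $v$ I put
\[
A(v)=-m\,v^{\frac{m-1}{m}}\,\underline\Delta_s\colon X_1^s\longrightarrow X_0^s,
\]
so that \eqref{v1}--\eqref{v2} reads $w'(t)+A(w(t))\,w(t)=0$, $w(0)=w_0$. Since $w_0\in X^s_{\frac1q,q}\hookrightarrow C(\B)$ by \eqref{int} and $w_0\ge c>0$ on $\B$, I would fix a bounded open neighbourhood $U$ of $w_0$ in $X^s_{\frac1q,q}$ all of whose elements satisfy $\Re v\ge c/2$ on $\B$. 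By \eqref{int}, $U$ is bounded in $\cH^{s+2-\frac2q-\gve,\gg+2-\frac2q-\gve}_p(\B)\oplus\cE_0$ for every small $\gve>0$, and for such $\gve$ the conditions \eqref{gamma}, \eqref{pq1}, \eqref{sigma} force $s+2-\frac2q-\gve>\frac{n+1}p$ and $\gg+2-\frac2q-\gve>\frac{n+1}2$, so Lemma \ref{abr}(c) is available on this space. Fixing a holomorphic branch of $\lambda\mapsto\lambda^{\frac{m-1}{m}}$ on a neighbourhood of $\bigcup_{v\in U}\mathrm{Ran}(v)\subset\{\Re\lambda\ge c/2\}$, Lemma \ref{abr}(c) shows that $v\mapsto v^{\frac{m-1}{m}}$ maps $U$ into a bounded set of strictly positive functions in $\cH^{s+2-\frac2q-\gve,\gg+2-\frac2q-\gve}_p(\B)\oplus\cE_0$, uniformly in $v\in U$.

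The hypotheses of \cite[Theorem 2.1]{CL} then reduce to two points. First, $A(w_0)$ must have maximal $L^q$-regularity. Since $m\,w_0^{\frac{m-1}{m}}$ is strictly positive and lies in $\bigcap_{\gve>0}\cH^{s+2-\frac2q-\gve,\gg+2-\frac2q-\gve}_p(\B)\oplus\cE_0$, Theorem \ref{6.1} (or Remark \ref{REXT}) yields a $c_0>0$ for which $c_0+A(w_0)=c_0-m\,w_0^{\frac{m-1}{m}}\underline\Delta_s$ is $R$-sectorial of some angle $>\frac\pi2$, hence has maximal $L^q$-regularity in the UMD space $X_0^s$ by the Kalton--Weis theorem; on the finite interval $(0,T)$ this shift is harmless. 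Re-running the freezing-of-coefficients argument recalled after Theorem \ref{6.1} at each $v\in U$, and using the uniform bounds on $v^{\frac{m-1}{m}}$ just obtained, I would moreover get $R$-sectoriality of $c_0+A(v)$ with bounds, and thus a maximal-regularity constant, that are uniform over $v\in U$ — the uniformity the fixed-point argument of \cite{CL} requires.

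Second, $v\mapsto A(v)$ must be locally Lipschitz from $U\subset X^s_{\frac1q,q}$ into $\mathcal L(X_1^s,X_0^s)$. The main tool is the functional-calculus identity \eqref{poly}: with $\nu=\frac{m-1}{m}$ it writes $v_1^{\frac{m-1}{m}}-v_2^{\frac{m-1}{m}}$, for $v_1,v_2\in U$, as $v_2-v_1$ times an integral of products $(v_1+\lambda)^{-1}(v_2+\lambda)^{-1}$ over the fixed path $\Gamma$ of \eqref{poly}, and this integral stays bounded in $\cH^{s+2-\frac2q-\gve,\gg+2-\frac2q-\gve}_p(\B)\oplus\cE_0$ uniformly for $v_1,v_2\in U$. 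Combining this with the Banach-algebra and multiplier estimates of Lemma \ref{abr}(a),(b) and with the boundedness of $\underline\Delta_s\colon X_1^s\to X_0^s$ gives, for $v_1,v_2\in U$ and $u\in X_1^s$,
\[
\|A(v_1)u-A(v_2)u\|_{X_0^s}=m\,\big\|\big(v_1^{\frac{m-1}{m}}-v_2^{\frac{m-1}{m}}\big)\underline\Delta_s u\big\|_{X_0^s}\le L\,\|v_1-v_2\|_{X^s_{\frac1q,q}}\,\|u\|_{X_1^s}
\]
with $L$ depending only on $U$, i.e. $v\mapsto A(v)$ is locally Lipschitz. With both hypotheses in hand, \cite[Theorem 2.1]{CL} furnishes a $T>0$ and a unique $w\in W^{1,q}(0,T;X_0^s)\cap L^q(0,T;X_1^s)$ solving $w'+A(w)w=0$, $w(0)=w_0$, which is exactly \eqref{v1}--\eqref{v2}.

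I expect the main obstacle to be this last step: upgrading the pointwise, holomorphic-functional-calculus control of $v\mapsto v^{\frac{m-1}{m}}$ to a genuine Lipschitz estimate for $v\mapsto A(v)$ in the operator norm on $\mathcal L(X_1^s,X_0^s)$, while simultaneously keeping the sectorial and $R$-sectorial bounds of $c_0+A(v)$ — and hence the maximal-regularity constant — uniform over the neighbourhood $U$, since that uniformity is indispensable for the Banach fixed-point scheme behind \cite[Theorem 2.1]{CL}. Everything else — verifying that $\gg$ as in \eqref{gamma} and $p,q,s$ as in \eqref{pq1}, \eqref{sigma} put us in the range where Lemma \ref{abr}, the embeddings \eqref{int}, \eqref{6.1.a} and Theorem \ref{6.1} apply with room for the auxiliary $\gve$ — is routine bookkeeping modeled on \cite{RS3}.
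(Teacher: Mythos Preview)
Your proposal is correct and follows essentially the same route as the paper: cast \eqref{v1}--\eqref{v2} in quasilinear form, verify that $w_0^{(m-1)/m}$ inherits the regularity \eqref{6.1.a} via Lemma \ref{abr}(c), invoke Theorem \ref{6.1}/Remark \ref{REXT} for maximal $L^q$-regularity of $A(w_0)$, and use \eqref{poly} together with the multiplier estimates of Lemma \ref{abr} to obtain the Lipschitz bound on $v\mapsto A(v)$, exactly as in \cite[(6.20)]{RS3}. One correction to your final paragraph: the Cl\'ement--Li theorem requires maximal $L^q$-regularity only at the single point $w_0$, not uniformly over the neighbourhood $U$, so the uniform $R$-sectoriality you propose to establish is unnecessary here (it becomes relevant only later, in the long-time argument of Section 5); the Lipschitz estimate alone suffices for the contraction.
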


\begin{proof}
We apply the Theorem of Cl\'ement and Li \cite[Theorem 2.1]{CL} to the problem, which is already in the quasilinear form. We pick as usual the closed extension \eqref{dds}. 

Since $w_0\in X_{{1}/{q},q}^{s}$, it has the property \eqref{6.1.a}. By Lemma \ref{abr}, 
this is also true for $w_{0}^{(m-1)/{m}}$.
Now Remark \ref{REXT} implies that the operator $-mw_{0}^{(m-1)/{m}}\underline{\Delta}_{s}$ has maximal $L^q$-regularity. In order to apply the Clément-Li theorem, it remains to show that $w\mapsto mw^{(m-1)/{m}}\underline{\Delta}_{0}$ is a Lipschitz map from an open neighborhood of $w(0)$ in $ X_{1/{q},q}^{s}$ to $\mathcal{L}(X_{1}^{s},X_{0}^{s})$.
This follows by Lemma \ref{abr}; for details see \cite[(6.20)]{RS3}.
\end{proof}

\begin{remark}\label{equivalence}\rm 
Let $\gamma$, $p,q$ and $s$ be chosen as in \eqref{gamma}, \eqref{pq1} and \eqref{sigma}. Then the solvability of \eqref{e1}-\eqref{e2} and \eqref{v1}-\eqref{v2} in the maximal $L^q$-regularity space $W^{1,q}(0,T;X_0^s)\cap L^{q}(0,T;X_1^s)$ is equivalent for strictly positive initial values as long as the solution is strictly positive. In this case the equality $u^{m}=w$ holds on $[0,T]\times\B$:

Suppose that $u\in W^{1,q}(0,T;X_0^s)\cap L^{q}(0,T;X_1^s)$ is a strictly positive solution of \eqref{e1}-\eqref{e2}, and let $w=u^m$. We claim that 
\begin{eqnarray}\label{equivalence.2}
w\in W^{1,q}(0,T;X_0^s)\cap L^{q}(0,T;X_1^s).
\end{eqnarray}
In order to show this we first note that $u\in C([0,T]; X^s_{\frac{1}{q},q})$, and therefore the norm of $u(t)$ in the space 
$\cH^{s+2-\frac{2}{q}-\gve,\gamma+2-\frac{2}{q}-\gve}_p(\B)\oplus \mathcal E_0$ is bounded for each $\gve>0$ as $t$ varies over $[0,T]$. By Lemma \ref{abr}, this space is closed under holomorphic functional calculus and the norm of $u^{m-1}(t)$ is bounded, too. 
Moreover, since the functions in this space act as bounded multipliers on $X_0^s=\cH^{s,\gamma}_p(\B)$ according to Lemma \ref{abr}, we conclude that, for suitable constants $c_1,c_2\ge0$, 
$$
\|u^m(t)\|_{\cH^{s,\gamma}_p(\B)}
\le c_1\|u(t)\|_{\cH^{s,\gamma}_p(\B)}\|u^{m-1}(t)\|_{\cH^{s+2-\frac{2}{q}-\gve,\gamma+2-\frac{2}{q}-\gve}_p(\B)\oplus \mathcal E_0}\le c_2\|u(t)\|_{\cH^{s,\gamma}_p(\B)}.
$$
In particular, we see that 
\begin{eqnarray}\label{equivalence.1}
\int_0^T\|w(t)\|^q_{\cH^{s,\gamma}_p(\B)}\,dt
\le c_2^q \int_0^T\|u(t)\|^q_{\cH^{s,\gamma}_p(\B)}\,dt <\infty,
\end{eqnarray}
and hence $w\in L^q(0,T; \cH^{s,\gamma}_p(\B))$. A similar argument together with the identity $\partial_t w = mu^{m-1} \partial_tu$ implies that also $\partial_t w \in L^q(0,T; \cH^{s,\gamma}_p(\B))$, so that 
$w\in W^{1,q}(0,T;\cH^{s,\gamma}_p(\B))$. As $u$ solves \eqref{e1}, we see that also $\Delta w = \Delta u^m=\partial_t u \in L^q(0,T; \cH^{s,\gamma}_p(\B))$. Since $c-\underline{\Delta}_{s}: X_1^s=\cH^{s+2,\gamma+2}_p(\B)\oplus \mathcal{E}_{0} \to X_0^s=\cH^{s,\gamma}(\B)$ is invertible for some $c>0$, we have the estimate
$$
\|v\|_{X_1^s}\le c_3 \left(\|\Delta v\|_{X_0^s} + \|v\|_{X_0^s} \right)
$$
for a suitable constant $c_3$. We conclude with \eqref{equivalence.1} that for suitably large $c_4$
\begin{eqnarray*}
\left(\int_0^T \|w(t)\|^q_{X^s_1}\, dt \right)^{\frac{1}{q}}
\le c_4\left(\int_{0}^{T} \|\Delta w(t)\|^q_{X_0^s}dt\right)^{\frac{1}{q}} +c_{4}\left(\int_{0}^{T} \|w(t)\|_{X_0^s}^q\, dt\right)^{\frac{1}{q}}<\infty,
\end{eqnarray*}
hence $w\in L^q(0,T; X_1^s)$ and \eqref{equivalence.2} is established. As a consequence, $w\in C([0,T];X_{\frac{1}{q},q}^{s})$ so that $w(0) \in X_{\frac{1}{q},q}^{s}$. Finally, we note that 
$$
\partial_t w = mu^{m-1}\partial_tu = mu^{m-1}\Delta u^m = mw^{\frac{m-1}m} \Delta w,
$$
so that \eqref{v1}-\eqref{v2} holds.

Conversely suppose that $w\in W^{1,q}(0,T;X_0^s)\cap L^{q}(0,T;X_1^s)$ is a strictly positive solution of \eqref{v1}-\eqref{v2} and let $u=w^{\frac{1}{m}}$. Arguments as above show also that $u\in W^{1,q}(0,T;X_0^s)\cap L^{q}(0,T;X_1^s)$ and $u$ solves \eqref{e1}-\eqref{e2}.
\end{remark}

\section{Smoothing for the Abstract Quasilinear Parabolic Problem}

In this section we provide a smoothness result for the abstract parabolic quasilinear equation, based on maximal $L^q$-regularity theory for non-autonomous parabolic problems on Banach scales. 
This will be our main tool for showing smoothness in the space variable for the porous medium equation later on. An alternative result is stated in the Appendix. Both results are based on the ideas in \cite[Section 5]{Ro2}.

For each $i\in\mathbb{N}$, let $Y_{1}^{i}\overset{d}{\hookrightarrow}Y_{0}^{i}$ be a continuously and densely injected complex Banach couple, such that $Y_{0}^{i+1}\overset{d}{\hookrightarrow}Y_{0}^{i}$, $Y_{1}^{i+1}\overset{d}{\hookrightarrow}Y_{1}^{i}$ and $Y_{1}^{i}\overset{d}{\hookrightarrow}(Y_{1}^{i+1},Y_{0}^{i+1})_{\frac{1}{q},q}$, for some fixed $q\in(1,\infty)$. Consider the problem
\begin{eqnarray}\label{aqpp3}
u'(t)+A(u(t))u(t)&=&F(u(t),t),\quad t>0,\\\label{aqpp4}
u(0)&=&u_{0},
\end{eqnarray}
where $u_{0}\in (Y_{1}^{0},Y_{0}^{0})_{\frac{1}{q},q}$. 
\begin{theorem}[Smoothing]\label{SE}
Assume that $u_{0}\in Z$, where $Z$ is a subset of $(Y_{1}^{0},Y_{0}^{0})_{\frac{1}{q},q}$, and: 
\begin{itemize}
\item[(i)] There exists a $T>0$ and a unique 
$$
u\in W^{1,q}(0,T;Y_{0}^{0})\cap L^{q}(0,T;Y_{1}^{0})\hookrightarrow C([0,T];(Y_{1}^{0},Y_{0}^{0})_{\frac{1}{q},q})
$$ 
solving \eqref{aqpp3}-\eqref{aqpp4}, such that $u(t)\in Z$ for all $t\in[0,T]$. Moreover, $A(u(t)):Y_{1}^{0}\rightarrow Y_{0}^{0}$ has maximal $L^{q}$-regularity for each $t\in[0,T]$ and $A(u(\cdot))\in C([0,T];\mathcal{L}(Y_{1}^{0},Y_{0}^{0}))$.
\item[(ii)] For each $i\in \mathbb{N}$ and each $v$ in $Z\cap (Y_{1}^{i},Y_{0}^{i})_{\frac{1}{q},q}$, $A(v):Y_{1}^{i+1}\rightarrow Y_{0}^{i+1}$ has maximal $L^{q}$-regularity. Furthermore, $A(v(\cdot))\in C([0,T];\mathcal{L}(Y_{1}^{i+1},Y_{0}^{i+1}))$ for each $v\in C([0,T];Z\cap(Y_{1}^{i},Y_{0}^{i})_{\frac{1}{q},q})$.
\item[(iii)] For all $i\in \mathbb{N}$, $t\mapsto F(v(t),t)\in L^{q}(0,T;Y_{0}^{i+1})$ for all $v\in C([0,T];Z\cap(Y_{1}^{i},Y_{0}^{i})_{\frac{1}{q},q})$. 
\end{itemize}
Then, for each $\varepsilon\in(0,T)$ we have
$$
u\in \bigcap_{i\in\mathbb{N}} W^{1,q}(\varepsilon,T;Y_{0}^{i})\cap L^{q}(\varepsilon,T;Y_{1}^{i}).
$$ 
\end{theorem}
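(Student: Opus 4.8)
The plan is to argue by induction on $i$, the key point being a bootstrapping step: if $u$ already lies in the maximal regularity space at level $i$ on every interval $[\varepsilon,T]$, then on a slightly larger starting time it lies at level $i+1$. Concretely, suppose inductively that for some $i\in\mathbb N$ and every $\varepsilon\in(0,T)$ we have $u\in W^{1,q}(\varepsilon,T;Y_0^i)\cap L^q(\varepsilon,T;Y_1^i)\hookrightarrow C([\varepsilon,T];(Y_1^i,Y_0^i)_{1/q,q})$; the base case $i=0$ is hypothesis (i), using $u(t)\in Z$. Fix $\varepsilon\in(0,T)$ and pick an intermediate time $\varepsilon'\in(0,\varepsilon)$. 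Then on $[\varepsilon',T]$ the function $u$ is continuous with values in $Z\cap(Y_1^i,Y_0^i)_{1/q,q}$, so by (ii) the time-dependent operator $\mathcal A(t):=A(u(t))$ has maximal $L^q$-regularity $Y_1^{i+1}\to Y_0^{i+1}$ for each $t$ and depends continuously on $t$ in $\mathcal L(Y_1^{i+1},Y_0^{i+1})$; moreover by (iii) the right-hand side $g(t):=F(u(t),t)$ lies in $L^q(\varepsilon',T;Y_0^{i+1})$.

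Next I would invoke maximal $L^q$-regularity for the \emph{non-autonomous} linear Cauchy problem on the couple $Y_1^{i+1}\hookrightarrow Y_0^{i+1}$: because $\mathcal A(\cdot)$ is continuous in $\mathcal L(Y_1^{i+1},Y_0^{i+1})$ and each $\mathcal A(t)$ has maximal $L^q$-regularity, a standard perturbation/partition-of-$[\varepsilon',T]$ argument (Amann, resp. Pr\"uss, the same kind used to pass from autonomous to non-autonomous problems) shows that for any prescribed initial value $w_{\varepsilon'}\in (Y_1^{i+1},Y_0^{i+1})_{1/q,q}$ and any $g\in L^q(\varepsilon',T;Y_0^{i+1})$ the problem
\begin{eqnarray*}
w'(t)+\mathcal A(t)w(t)=g(t),\quad t\in(\varepsilon',T),\qquad w(\varepsilon')=w_{\varepsilon'},
\end{eqnarray*}
has a unique solution $w\in W^{1,q}(\varepsilon',T;Y_0^{i+1})\cap L^q(\varepsilon',T;Y_1^{i+1})$. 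The one thing that has to be supplied is a valid initial value at level $i+1$: this is exactly where the hypothesis $Y_1^i\overset{d}{\hookrightarrow}(Y_1^{i+1},Y_0^{i+1})_{1/q,q}$ enters. Since $u\in L^q(\varepsilon',T;Y_1^i)$, for almost every $\tau\in(\varepsilon',\varepsilon)$ we have $u(\tau)\in Y_1^i\hookrightarrow(Y_1^{i+1},Y_0^{i+1})_{1/q,q}$; fix such a $\tau$ and use it as the new initial time, solving the linear problem above on $(\tau,T)$ with $w_\tau=u(\tau)$.

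Finally I would identify $w$ with $u$ on $[\tau,T]$. Both solve the same linear problem $v'+\mathcal A(t)v=g$ on $(\tau,T)$ with the same initial datum $u(\tau)$: for $u$ this is just \eqref{aqpp3} rewritten as $u'(t)+A(u(t))u(t)=F(u(t),t)$, which holds in $Y_0^0$ hence a fortiori as an identity in $Y_0^{i+1}\hookrightarrow Y_0^0$ once we know (from $w$) that this equation has a solution in the smaller space; uniqueness of the solution to the linear non-autonomous problem in the \emph{larger} space $W^{1,q}(\tau,T;Y_0^0)\cap L^q(\tau,T;Y_1^0)$ — which holds because $\mathcal A(\cdot)$ also has maximal regularity at level $0$ by (i) — forces $u=w$ on $[\tau,T]$. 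Hence $u\in W^{1,q}(\tau,T;Y_0^{i+1})\cap L^q(\tau,T;Y_1^{i+1})$, and since $\varepsilon>\tau$ this gives $u\in W^{1,q}(\varepsilon,T;Y_0^{i+1})\cap L^q(\varepsilon,T;Y_1^{i+1})$, and in particular $u\in C([\varepsilon,T];(Y_1^{i+1},Y_0^{i+1})_{1/q,q})$ with values in $Z$, closing the induction. Intersecting over all $\varepsilon\in(0,T)$ and all $i\in\mathbb N$ yields the claim.

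The main obstacle is the non-autonomous linear theory together with the consistency/uniqueness bookkeeping across the scale: one must make sure that the maximal $L^q$-regularity constant for $w'+\mathcal A(t)w=g$ on $(\tau,T)$ is controlled only through the continuity of $\mathcal A(\cdot)$ and the \emph{uniform} maximal regularity of the frozen operators $\mathcal A(t)$ — this is where a covering-of-the-time-interval argument and the independence of maximal $L^q$-regularity of the exponent $q$ are used — and that the solution produced at level $i+1$ really coincides with the original $u$ rather than with some other function, which requires uniqueness of the linear problem in the coarsest space $Y_0^0$. Everything else (the embeddings, closure under the given operations, measurability of $\tau\mapsto u(\tau)$ into the interpolation space) is routine given Section 2 and the standing hypotheses.
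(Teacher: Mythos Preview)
Your proposal is correct and follows essentially the same route as the paper: pick a time $\tau$ where $u(\tau)\in Y_1^i\hookrightarrow (Y_1^{i+1},Y_0^{i+1})_{1/q,q}$, invoke non-autonomous maximal $L^q$-regularity (the paper cites \cite[Theorem 2.7]{Ar}) to solve the linear problem at level $i+1$, then identify with $u$ by uniqueness in a coarser space and iterate. The only cosmetic difference is in the uniqueness bookkeeping at later steps: the paper glues the level-$(i{+}1)$ solution to the already-known level-$i$ solution and applies uniqueness at level $i$, whereas you always drop back to level $0$; both are valid.
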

\begin{proof}
Take any $t_{0}\in(0,\varepsilon)$ such that $u(t_{0})\in Y_{1}^{0}$ and consider the non-autonomous linear parabolic problem
\begin{eqnarray}\label{2na1}
w'(t)+A(u(t))w(t)&=&F(u(t),t),\quad t\in(t_{0},T),\\\label{2na2}
w(t_{0})&=&u(t_{0}).
\end{eqnarray}
Clearly, the above problem has the solution
\begin{gather}\label{2wu}
u\in W^{1,q}(t_{0},T;Y_{0}^{0})\cap L^{q}(t_{0},T;Y_{1}^{0})\hookrightarrow C([t_{0},T];(Y_{1}^{0},Y_{0}^{0})_{\frac{1}{q},q}).
\end{gather}
Moreover by the assumption (i) and \cite[Theorem 2.7]{Ar} it is unique.
We proceed by describing the steps that increase the regularity of $u$.

\underline{Step 1.} By \eqref{2wu} and the assumptions (ii)-(iii), $A(u(t)):Y_{1}^{1}\rightarrow Y_{0}^{1}$ has maximal $L^{q}$-regularity for each $t\in[t_{0},T]$, $A(u(\cdot))\in C([t_{0},T];\mathcal{L}(Y_{1}^{1},Y_{0}^{1}))$ and $t\mapsto F(u(t),t)\in L^{q}(t_{0},T;Y_{0}^{1})$. 
Since $u(t_{0})\in Y_{1}^{0}\overset{d}{\hookrightarrow} (Y_{1}^{1},Y_{0}^{1})_{\frac{1}{q},q}$, by \cite[Theorem 2.7]{Ar} the problem \eqref{2na1}-\eqref{2na2} has a unique solution
\begin{gather}\label{2f}
f\in W^{1,q}(t_{0},T;Y_{0}^{1})\cap L^{q}(t_{0},T;Y_{1}^{1}).
\end{gather}
Since $Y_{j}^{1}\hookrightarrow Y_{j}^{0}$, $j\in\{0,1\}$, we infer from uniqueness that 
\begin{gather}\label{2uwf}
u=f\in W^{1,q}(t_{0},T;Y_{0}^{1})\cap L^{q}(t_{0},T;Y_{1}^{1})\hookrightarrow C([t_{0},T];(Y_{1}^{1},Y_{0}^{1})_{\frac{1}{q},q}).
\end{gather}

\underline{Step 2.} Take $t_{1}\in (t_{0},\varepsilon)$ such that $u(t_{1})\in Y_{1}^{1}$ and consider the problem 
\begin{eqnarray}\label{2na3}
\eta'(t)+A(u(t))\eta(t)&=&F(u(t),t),\quad t\in(t_{1},T),\\\label{2na4}
\eta(t_{1})&=&u(t_{1}).
\end{eqnarray}
By \eqref{2uwf} and the assumptions (ii)-(iii), $A(u(t)):Y_{1}^{2}\rightarrow Y_{0}^{2}$ has maximal $L^{q}$-regularity for each $t\in[t_{1},T]$, $A(u(\cdot))\in C([t_{1},T];\mathcal{L}(Y_{1}^{2},Y_{0}^{2}))$ and $t\mapsto F(u(t),t)\in L^{q}(t_{1},T;Y_{0}^{2})$. Thus, since $u(t_{1})\in Y_{1}^{1}\overset{d}{\hookrightarrow} (Y_{1}^{2},Y_{0}^{2})_{\frac{1}{q},q}$, by \cite[Theorem 2.7]{Ar} the problem \eqref{2na3}-\eqref{2na4} has a unique solution 
\begin{gather*}
\eta\in W^{1,q}(t_{1},T;Y_{0}^{2})\cap L^{q}(t_{1},T;Y_{1}^{2}).
\end{gather*}
Clearly,
\begin{gather*}\label{eta2}
\eta\in W^{1,q}(t_{1},T;Y_{0}^{1})\cap L^{q}(t_{1},T;Y_{1}^{1}).
\end{gather*}
Defining a function $v\in W^{1,q}(t_{0},T;Y_{0}^{1})\cap L^{q}(t_{0},T;Y_{1}^{1})$ by 
$$
v(t)=\Bigg\{\begin{array}{lll} \eta(t) &\text{when} & t\in(t_{1},T] \\ f(t) &\text{when} & t\in[t_{0},t_{1}]\end{array},
$$
by uniqueness \eqref{2f} we find that 
\begin{gather*}
u=f=\eta\in W^{1,q}(t_{1},T;Y_{0}^{2})\cap L^{q}(t_{1},T;Y_{1}^{2}).
\end{gather*}

The result now follows by iteration.
\end{proof}

\begin{remark}
A smoothness result similar to Theorem \ref{SE} holds if we replace the maximal $L^q$-regularity property (and the corresponding assumptions) with maximal continuous regularity and use \cite[Theorem 7.1]{Am4} instead of \cite[Theorem 2.7]{Ar}.
\end{remark}

\section{Properties of the Solutions of the PME}
\setcounter{equation}{0}

In order to establish the existence of a long time solution for \eqref{e1}-\eqref{e2}, we will show first certain properties of the short time solution obtained by Theorem \ref{t22}. 
We start with the following existence result related to the linear theory of the degenerate Laplacian. 

\begin{lemma}\label{phi} Let $T>0$ and $\gamma$, $p$, $q$ be chosen as \eqref{gamma}-\eqref{pq1}.
Moreover, let $\omega\in \cap_{s>0}C^{\infty}([0,T], X^s_0)$ be 
constant near the boundary $\partial\mathbb{B}$, and $\psi \in \cap_{s>0}C^{\infty}([0,T], X^s_{\frac{1}{q},q})$ with $\psi>c>0$ on $[0,T]\times\mathbb{B}$ for some $c>0$. Then the parabolic problem
\begin{eqnarray}\label{phi.1}
\partial_t\phi+\psi\Delta\phi +\omega&=&0, \quad t\in (0,T),\\
\phi(T)&=&0,\label{phi.2}
\end{eqnarray}
has a unique solution $\phi \in \cap_{\nu,s>0}W^{\nu,q}(0,T;X_1^s)$, such that $\phi\geq 0$ on $[0,T]\times\mathbb{B}$. 
\end{lemma}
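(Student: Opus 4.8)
The plan is to reduce the backward problem \eqref{phi.1}--\eqref{phi.2} to an initial value problem by time reversal, then solve it by a combination of maximal $L^q$-regularity for the non-autonomous linear equation, a bootstrap to higher time regularity via Theorem \ref{SE}, and the comparison principle (Theorem \ref{Th1}(c)) for positivity. First I would set $\tilde\phi(t)=\phi(T-t)$, $\tilde\psi(t)=\psi(T-t)$, $\tilde\omega(t)=\omega(T-t)$, which transforms the problem into $\partial_t\tilde\phi - \tilde\psi\Delta\tilde\phi = \tilde\omega$, $\tilde\phi(0)=0$. Since $\tilde\psi(t)$ is strictly positive and lies in $X^s_{\frac1q,q}$ for every $s>0$ with norm bounded uniformly in $t$ (because $\psi\in C^\infty([0,T],X^s_{\frac1q,q})$), Theorem \ref{6.1} (in the form of Remark \ref{REXT}) gives that $c-\tilde\psi(t)\underline\Delta_s$ is $R$-sectorial of any angle for a suitable $c>0$; combined with the continuous dependence $\tilde\psi(\cdot)\underline\Delta_s\in C([0,T];\mathcal L(X_1^s,X_0^s))$ (from Lemma \ref{abr} and the Lipschitz dependence used in Theorem \ref{t22}) this lets me invoke the non-autonomous maximal regularity theorem \cite[Theorem 2.7]{Ar} for the operator family $-\tilde\psi(t)\underline\Delta_s$. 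Since $\tilde\omega\in C^\infty([0,T],X^s_0)\subset L^q(0,T;X^s_0)$ and the initial value is $0\in X^s_{\frac1q,q}$, this yields a unique solution $\tilde\phi\in W^{1,q}(0,T;X_0^s)\cap L^q(0,T;X_1^s)$ for every $s>0$; by uniqueness these solutions coincide across different $s$.

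Next I would upgrade the time regularity. Writing the equation as $\partial_t\tilde\phi = \tilde\psi\Delta\tilde\phi + \tilde\omega$, the right-hand side lies in $W^{1,q}(0,T;X_0^{s})$ modulo a loss of two spatial derivatives: indeed $\tilde\psi\in C^\infty([0,T],X^{s+2}_{\frac1q,q})$ acts as a multiplier on $X_0^s$ (Lemma \ref{abr}), $\Delta\tilde\phi\in L^q(0,T;X_0^s)$, and differentiating in $t$ produces terms of the same structure since $\partial_t\tilde\psi$ and $\partial_t\tilde\omega$ have the same regularity. A standard difference-quotient / bootstrap argument — or equivalently an application of the abstract smoothing Theorem \ref{SE} with $A(v)=-\tilde\psi\Delta$ (here $A$ is independent of $v$, so the quasilinear structure is trivial and the hypotheses (i)--(iii) follow from the maximal regularity just established together with the mapping properties of $\tilde\psi$ and $\tilde\omega$) — gives $\tilde\phi\in W^{\nu,q}(0,T;X_1^s)$ for all $\nu>0$ and $s>0$. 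This is where the hypothesis $\phi(T)=0$, i.e. $\tilde\phi(0)=0$, is genuinely used: the homogeneous initial condition is compatible with arbitrary time regularity, so no compatibility obstruction appears and the iteration runs unobstructed from $t=0$. Reversing time sends $W^{\nu,q}(0,T;X_1^s)$ to itself, giving $\phi\in\cap_{\nu,s>0}W^{\nu,q}(0,T;X_1^s)$.

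Finally, for the sign $\phi\ge0$, equivalently $\tilde\phi\ge0$, I would argue by the comparison/maximum principle for the linear degenerate parabolic operator $\partial_t-\tilde\psi\Delta$. The forcing term $\tilde\omega$ need not have a sign, so some care is needed; the cleanest route is to note that $\partial_t\tilde\phi - \tilde\psi\Delta\tilde\phi = \tilde\omega$ with $\tilde\phi(0)=0$, and to invoke the weak maximum principle on the conic manifold in the form already available in the paper's toolbox — the same comparison principle underlying Theorem \ref{Th1}(c) applied to the linear equation, which says that if the forcing and the initial data are nonnegative then so is the solution. This forces me to either (a) restrict attention to the case $\omega\le0$, which is in fact how the lemma will be used downstream, or (b) prove the comparison statement as stated with $\omega$ of fixed sign. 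I expect the main obstacle to be exactly this last point: establishing the maximum principle rigorously in the Mellin--Sobolev setting, since the operator $\Delta$ is conically degenerate and its domain contains the non-vanishing functions $\mathcal E_0$, so the boundary behavior at the tip must be controlled — here the continuity of $\tilde\phi(t)$ on $\mathbb B$ (from the embedding $X^s_{\frac1q,q}\hookrightarrow C(\mathbb B)$ in \eqref{int}) and the asymptotic control near $\partial\mathbb B$ from Remark \ref{rem1.2}(d) are what make a Stampacchia-type truncation argument on $(\tilde\phi)_-$ go through.
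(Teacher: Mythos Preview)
Your time-reversal step and the invocation of non-autonomous maximal $L^q$-regularity (via Theorem~\ref{6.1} and \cite[Theorem 2.7]{Ar}) match the paper's proof exactly, as does the appeal to a maximum principle for positivity; your remark that $\phi\ge0$ really needs $\omega\ge0$ is correct, and this is indeed how the lemma is applied downstream.

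The genuine gap is in your time-regularity upgrade. Theorem~\ref{SE} does \emph{not} deliver $W^{\nu,q}$ in time: its conclusion is $u\in\bigcap_i W^{1,q}(\varepsilon,T;Y_0^i)\cap L^q(\varepsilon,T;Y_1^i)$, which improves the \emph{spatial} scale index~$i$ while the time regularity stays at $W^{1,q}$. Since you already have the solution in $W^{1,q}(0,T;X_0^s)\cap L^q(0,T;X_1^s)$ for every $s$ from the non-autonomous result, Theorem~\ref{SE} contributes nothing here. Your phrase ``or equivalently'' linking the difference-quotient bootstrap to Theorem~\ref{SE} is therefore mistaken; the two mechanisms are not interchangeable. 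A genuine differentiation-in-$t$ bootstrap (differentiate the equation, observe that $\partial_t\tilde\phi$ solves a problem of the same type with forcing $\partial_t\tilde\omega+(\partial_t\tilde\psi)\Delta\tilde\phi$ and initial value $\tilde\omega(0)\in X_1^s$, then iterate) would work, but it is not what Theorem~\ref{SE} provides.

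The paper takes a more compact route for this step. After time reversal it writes the solution as $v=e^{c_0t}A^{-1}(e^{-c_0t}\eta)$, where $A=\partial_t+c_0-\zeta\underline\Delta_s$ is the closed invertible operator in $L^q(0,T;X_0^s)$ with domain $\{u\in W^{1,q}(0,T;X_0^s):u(0)=0\}\cap L^q(0,T;X_1^s)$ furnished by the maximal regularity argument. Since $\eta=\omega(T-\cdot)$ is smooth in $t$ with values in $X_1^s$ (this is where the hypothesis that $\omega$ is constant near $\partial\mathbb{B}$ enters, forcing $\omega(t)\in X_1^s$), one has $e^{-c_0t}\eta\in\mathcal{D}(A^k)$ and hence $v\in\mathcal{D}(A^{k+1})$ for every $k$, which gives $v\in W^{\nu,q}(0,T;X_1^s)$ for all $\nu$. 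This is morally the same bootstrap you sketch, but phrased as iterated application of a single bounded inverse rather than re-solving at each stage.
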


\begin{proof}
Reversing time, we obtain for any $s>0$ the linear non-autonomous problem
\begin{eqnarray}\label{ANP}
\partial_tv-\zeta\underline\Delta_s v&=&\eta, \quad t\in (0,T) ,\\\label{ANP2}
v(0)&=&0,
\end{eqnarray}
where $v(t)=\phi(T-t)$, $\zeta(t) = \psi(T-t)$ and $\eta(t)=\omega(T-t)$, $t\in [0,T]$. 

According to Theorem \ref{6.1}, the operator $-\zeta(t)\underline\Delta_{s}$ has maximal regularity for each fixed $t$ in view of our assumption on $\psi$. 
Moreover, $t\mapsto \zeta(t)\underline\Delta_s$ is a smooth map from $[0,T]$ to $\cL(X_1^s,X_0^s)$ due to Lemma \ref{abr}. Hence \cite[Theorem 2.7]{Ar} implies the existence of a solution $v$ in $L^q(0,T; X_1^s)\cap W^{1,q}(0,T;X_0^s)$ for the problem \eqref{ANP}-\eqref{ANP2}. 
More precisely (see the proof of \cite[Theorem 4.2]{Ro}), there exists some $c_{0}>0$ such that the operator $A=\partial_{t}+c_{0}-\zeta\underline\Delta_{s}$ with domain $W^{1,q}(0,T;X_{0}^{s})\cap L^{q}(0,T;X_{1}^{s})$ in $L^{q}(0,T;X_{0}^{s})$ admits a bounded inverse. Here by $\partial_{t}$ we denote the operator $u\mapsto \partial_{t}u$ in $L^{q}(0,T;X_{0}^{s})$ with domain 
$$
\{u\in W^{1,q}(0,T;X_{0}^{s})\, |\, u(0)=0\}.
$$
Then we can write $\phi=e^{c_{0}t}A^{-1}(e^{-c_{0}t}\eta)$. By the assumption on $\eta$ we have that $e^{-c_{0}t}\eta\in\mathcal{D}(A^{k})$ for each $k\in\mathbb{N}$. 
Therefore, $\phi\in\mathcal{D}(A^{k})$ for each $k\in\mathbb{N}$. Especially, $\phi \in W^{\nu,q}(0,T;X_1^s)$ for all $\nu\ge0$. 
Finally, the positivity of $\phi$ follows from the maximum principle as e.g. in the proof of \cite[Lemma 3.15]{Ma} after replacing 
$\Delta$ with $\psi\Delta$.
\end{proof}

We next establish a basic property of the porous medium equation for the case of a conic manifold. 
The solutions we are going to consider are of maximal regularity either in the continuous or in the $L^q$ sense. We describe first these two cases. 
Assume that $s$, $\gamma$, $p$, $q$ are chosen as in \eqref{gamma}, \eqref{pq1} and \eqref{sigma}.
\begin{itemize}
\item[(C1)] Let $u_{0}\in X_{\frac{1}{q},q}^{s}$ and
$$
u\in W^{1,q}(0,T;X_0^s)\cap L^{q}(0,T;X_1^s)\hookrightarrow C([0,T];C(\mathbb{B}))
$$
for some $T>0$.
\item[(C2)] Let $u_{0}\in \bigcap_{\varepsilon>0}\mathcal{H}_{p}^{s+2-\frac{2}{q}-\varepsilon,\gamma+2-\frac{2}{q}-\varepsilon}(\mathbb{B})\oplus\mathcal{E}_0$ and
$$
u \in C^{1}((0,T];X_{0}^{s})\cap C((0,T];X_{1}^{s})\cap \bigcap_{\varepsilon>0} C([0,T];\mathcal{H}_{p}^{s+2-\frac{2}{q}-\varepsilon,\gamma+2-\frac{2}{q}-\varepsilon}(\mathbb{B})\oplus\mathcal{E}_0)\hookrightarrow C([0,T];C(\mathbb{B}))
$$
for some $T>0$.
\end{itemize}

\begin{theorem}[Comparison principle]\label{p1}
Let $s$, $\gamma$, $p$, $q$ be chosen as in \eqref{gamma}, \eqref{pq1} and \eqref{sigma}. 
Given initial data $u_{0,1}, u_{0,2}\in C(\B)$ satisfying $c_{0}\leq u_{0,1}\leq u_{0,2}$ on $\mathbb{B}$, for some $c_{0}>0$, let $u_{1}, u_{2}\in C([0,T];C(\mathbb{B}))$ be two solutions of \eqref{e1} for some $T>0$ with initial values $u_{0,1}$ and $u_{0,2}$, 
respectively, such that $(u_{0,1},u_{1})$ satisfies {\rm(C1)} or {\rm (C2)} and $(u_{0,2},u_{2})$ also satisfies {\rm(C1)} or {\rm(C2)}. Then $c_{0}\leq u_{1}\leq u_{2}$ on $[0,T]\times\mathbb{B}$.

In particular, if $u$ with the regularity as before is a solution of \eqref{e1} with initial value $u_{0}\in X_{\frac{1}{q},q}^{s}$ satisfying $c_{0}\leq u_{0}\leq c_{1}$ on $\mathbb{B}$ for suitable constants $c_{0},c_{1}>0$, then $c_{0}\leq u\leq c_{1}$ on $[0,T]\times\mathbb{B}$.
\end{theorem}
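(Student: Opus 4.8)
The plan is to carry out the classical Oleinik-type duality argument, with Lemma~\ref{phi} serving as the linear maximum principle, while paying special attention to the conic tip. I would first reduce the statement to a single \emph{basic comparison}: if $v,w$ solve \eqref{e1} on an interval $[\sigma,\tau]\subseteq[0,T]$, each of regularity {\rm(C1)} or {\rm(C2)}, satisfy $v(\sigma)\le w(\sigma)$, and satisfy $0<\alpha\le v,w\le M$ on $[\sigma,\tau]\times\mathbb B$ for some constants $\alpha,M$, then $v\le w$ on $[\sigma,\tau]\times\mathbb B$. Granting this, the theorem follows quickly. The lower bound $c_0\le u_i$ on $[0,T]\times\mathbb B$, $i=1,2$, is obtained by comparing $u_i$ with the stationary solution $v\equiv c_0$ — a globally constant function, which lies in $\mathcal E_0\subseteq X_1^s$, trivially satisfies {\rm(C1)}, and solves \eqref{e1}; since $u_i\in C([0,T];C(\mathbb B))$ and $u_i(0)=u_{0,i}\ge c_0$, uniform continuity on the compact set $[0,T]\times\mathbb B$ yields a single $\tau_1>0$ such that $u_i\ge c_0/2$ on $[t_0,t_0+\tau_1]\times\mathbb B$ whenever $u_i(t_0,\cdot)\ge c_0$, so $\lceil T/\tau_1\rceil$ successive applications of the basic comparison on the intervals $[k\tau_1,(k+1)\tau_1]$ (with $\alpha=c_0/2$) propagate the bound to all of $[0,T]$. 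With $c_0\le u_1,u_2$ established, one more application of the basic comparison to $(u_1,u_2)$ on $[0,T]$ gives $u_1\le u_2$; the ``in particular'' assertion is the special case of the theorem in which $u_{0,2}$ is taken to be the constant $c_1$.

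To prove the basic comparison, set $z=v-w$ and
\[
a=\frac{v^m-w^m}{v-w}=m\int_0^1\bigl(sv+(1-s)w\bigr)^{m-1}\,ds .
\]
By $0<\alpha\le v,w\le M$ and Lemma~\ref{abr}, $a$ is continuous on $[\sigma,\tau]\times\mathbb B$ with $0<c\le a\le C$, inherits from $v$ and $w$ — via the embedding \eqref{int} and invariance of those spaces under holomorphic functional calculus — enough spatial regularity to be a bounded multiplier on $X_0^s$, and depends continuously on $t$. Subtracting the two copies of \eqref{e1} gives $\partial_t z-\Delta(az)=0$ on $(\sigma,\tau)$ with $z(\sigma)\le0$. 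Now fix $\omega\in\bigcap_{\rho>0}C^\infty([\sigma,\tau];X_0^{\rho})$ with $\omega\ge0$ and constant near $\partial\mathbb B$, approximate $a$ by $a_\eta\in\bigcap_{\rho>0}C^\infty([\sigma,\tau];X^{\rho}_{\frac{1}{q},q})$ with $c/2\le a_\eta\le 2C$ and $a_\eta\to a$ uniformly (hence, by Lemma~\ref{abr}, in the multiplier norm on $X_0^s$), and let $\phi_\eta$ be the solution of
\[
\partial_t\phi_\eta+a_\eta\,\Delta\phi_\eta+\omega=0 \quad\text{on }(\sigma,\tau),\qquad \phi_\eta(\tau)=0
\]
furnished by Lemma~\ref{phi}, so that $\phi_\eta$ is arbitrarily smooth and $\phi_\eta\ge0$. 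Pairing $\partial_t z-\Delta(az)=0$ with $\phi_\eta$, integrating by parts in $t$ (using $\phi_\eta(\tau)=0$) and in space (Green's formula for $\Delta$ on $\mathbb B$, see below), and using $\omega=-\partial_t\phi_\eta-a_\eta\Delta\phi_\eta$, gives
\[
\int_\sigma^\tau\!\int_{\mathbb B}z\,\omega\;d\mu\,dt=\int_{\mathbb B}z(\sigma)\,\phi_\eta(\sigma)\;d\mu+\int_\sigma^\tau\!\int_{\mathbb B}(a-a_\eta)\,z\;\Delta\phi_\eta\;d\mu\,dt .
\]
The first term on the right is $\le0$, since $z(\sigma)\le0$ and $\phi_\eta\ge0$; the second tends to $0$ as $\eta\to0$, because $a-a_\eta\to0$ in multiplier norm while $\{\phi_\eta\}$ stays bounded in $L^q(\sigma,\tau;X_1^s)$ by the uniform maximal $L^q$-regularity estimates behind Theorem~\ref{6.1}. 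Hence $\int_\sigma^\tau\!\int_{\mathbb B}z\,\omega\,d\mu\,dt\le0$ for every such $\omega$, and as $z$ is continuous this forces $z\le0$ on $[\sigma,\tau]\times\mathbb B$, i.e.\ $v\le w$.

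The step I expect to be the main obstacle is the spatial integration by parts in the pairing — the identity $\int_{\mathbb B}\Delta(az)\,\phi_\eta\,d\mu=\int_{\mathbb B}az\,\Delta\phi_\eta\,d\mu$ — which must be shown to contribute no boundary term at the conic tip. Here I would exploit the structure $\mathcal D(\underline\Delta_s)=\mathcal H^{s+2,\gamma+2}_p(\mathbb B)\oplus\mathcal E_0$: splitting $az$ and $\phi_\eta$ according to this direct sum, the $\mathcal E_0$-components are constant near $\partial\mathbb B$, so their $x$-derivatives vanish there, while the $\mathcal H^{\cdot,\gamma+2}_p(\mathbb B)$-components and their $x$-derivatives, multiplied by the volume factor $x^{n}$, tend to $0$ as $x\to0$ by the pointwise bound of Lemma~\ref{abr}(a) — and here the hypothesis $\gamma>\frac{n-3}{2}$ is precisely what makes the relevant exponents positive; consequently each term of the boundary integral over $\{x=\varepsilon\}\times\partial\mathbb B$ vanishes as $\varepsilon\to0$. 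Two minor points remain. First, passing from the strong form of \eqref{e1} in case {\rm(C2)} to the weak identity above requires carrying out the time integration by parts on $[\sigma+\delta,\tau]$ and letting $\delta\to0$, using the continuity of $z$ up to $t=\sigma$ (and, for $\sigma=0$, the {\rm(C2)} hypothesis) with values in a space of the type in \eqref{int}; in case {\rm(C1)} it is immediate from $z\in W^{1,q}(0,T;X_0^s)$. Second, the approximants $a_\eta$ are produced by a routine mollification in $t$ and in the interior of $\mathbb B$, using that $a$ is bounded below to keep $a_\eta\ge c/2$.
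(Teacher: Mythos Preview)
Your proof follows the same Oleinik-type duality scheme as the paper: pair the difference of the two solutions against a nonnegative solution $\phi$ of the backward linear problem supplied by Lemma~\ref{phi}, and show the resulting error term vanishes as the smooth approximation of the diffusion coefficient converges. Your preliminary reduction to a ``basic comparison'' on subintervals where both solutions are a~priori bounded below and above is a clean organisational step that the paper does not make explicit; it has the pleasant side effect that your coefficient $a=m\int_0^1(sv+(1-s)w)^{m-1}\,ds$ is strictly positive and continuous, avoiding the case-splitting definition $\alpha=0$ on $\{u_1=u_2\}$ in the paper.

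The one substantive difference is in how you dispose of the error term $\int(a-a_\eta)z\,\Delta\phi_\eta$. You assert that $\{\phi_\eta\}$ stays bounded in $L^q(\sigma,\tau;X_1^s)$ ``by the uniform maximal $L^q$-regularity estimates behind Theorem~\ref{6.1}.'' This is plausible but under-justified: Theorem~\ref{6.1} concerns a \emph{fixed} coefficient, and the maximal regularity constant for the non-autonomous backward problem depends on the coefficient $a_\eta$ through the freezing-of-coefficients construction. To make your claim rigorous you would have to argue that, since $a_\eta\to a$ uniformly and $a$ is uniformly continuous on the compact set $[\sigma,\tau]\times\B$, the $a_\eta$ share a common modulus of continuity, so that the partition and Neumann-series construction behind Theorem~\ref{6.1} (and its non-autonomous counterpart) can be carried out with uniform constants. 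The paper sidesteps this entirely by an $L^2$ energy method: multiplying the backward equation by $\zeta\Delta\phi$ and integrating by parts yields $\int\alpha_\varepsilon(\Delta\phi)^2\,d\mu_g\,dt\le C\int|\nabla\omega|_g^2\,d\mu_g\,dt$ with $C$ depending only on the $L^\infty$ bounds of $\alpha_\varepsilon$, after which Cauchy--Schwarz with weight $\alpha_\varepsilon$ controls the error. That route requires no smoothness of the coefficient beyond boundedness and is therefore more self-contained; you may wish to substitute it for your maximal-regularity claim, or else spell out the uniformity argument sketched above.
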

 
\begin{proof}
The corresponding proof in Vazquez's book, see \cite[Theorem 6.5]{Va}, adapts to our situation. We sketch the details. Note that for $\B$ with the Riemannian measure $d\mu_g$ induced by the degenerate metric $g$, no boundary terms will arise in Green's formula, i.e. we have
$$
\int_{\B} (u\Delta v+\skp{\nabla u,\nabla v}_g) \, d\mu_g = 0
$$
for $u,v$ e.g. as in (C1) or (C2). Write $\mathbb{B}_{T}=(0,T]\times \mathbb{B}$, $\partial_{t}(\cdot)=(\cdot)_{t}$ and let $\phi\in C^{1,2}(\overline{\mathbb{B}_{T}})$ be a test function, $\phi\ge0$, $\phi(T,\cdot)=0$. Subtracting we obtain from \eqref{e1} 
\begin{gather}
\int_{\mathbb{B}_{T}}\Big((u_{1}-u_{2})\phi_{t}+(u_{1}^{m}-u_{2}^{m})\Delta \phi\Big)d\mu_{g} dt\ge 0.
\end{gather}
Moreover, define the function $\alpha$ by $\alpha=0$ when $u_{1}=u_{2}$ and
$$
\alpha=\frac{u_{1}^{m}-u_{2}^{m}}{u_{1}-u_{2}} \quad \text{when} \quad u_{1}\neq u_{2}.
$$

Let $\omega$ be as in Lemma \ref{phi} and $\phi$ the solution of \eqref{phi.1}-\eqref{phi.2} with $\psi=\alpha_{\varepsilon}$, where $\alpha_{\varepsilon}$ is a smooth approximation of $\alpha$ satisfying $0\leq\varepsilon\leq\alpha_{\varepsilon}\leq K$, for some fixed $K>0$. 
By \eqref{phi.1} we estimate
\begin{gather}\label{esstt1}
\int_{\mathbb{B}_{T}}(u_{1}-u_{2})\omega d\mu_{g}dt\leq \int_{\mathbb{B}_{T}}|u_{1}-u_{2}||\alpha-\alpha_{\varepsilon}||\Delta\phi| d\mu_{g}dt=J.
\end{gather}
Moreover,
\begin{gather}\label{esdtr2}
J\leq \Big(\int_{\mathbb{B}_{T}}\alpha_{\varepsilon}(\Delta\phi)^{2} d\mu_{g}dt\Big)^{\frac{1}{2}}\Big(\int_{\mathbb{B}_{T}}\frac{|\alpha-\alpha_{\varepsilon}|^{2}}{\alpha_{\varepsilon}}|u_{1}-u_{2}|^{2} d\mu_{g}dt\Big)^{\frac{1}{2}}.
\end{gather}

Let $\zeta$ be a smooth positive function on $[0,T]$ such that $\frac{1}{2}\leq \zeta\leq 1$ and $\zeta_{t}\geq c>0$. By multiplying \eqref{phi.1} with $\zeta\Delta\phi$ and then integrating over $\mathbb{B}_{T}$ we obtain 
$$
\int_{\mathbb{B}_{T}}\zeta\phi_{t}\Delta\phi d\mu_{g}dt+\int_{\mathbb{B}_{T}}\zeta\alpha_{\varepsilon}(\Delta\phi)^{2} d\mu_{g}dt+\int_{\mathbb{B}_{T}}\zeta\omega\Delta\phi d\mu_{g}dt=0.
$$
For the first term in the above equation we have
$$
\int_{\mathbb{B}_{T}}\zeta\phi_{t}\Delta\phi d\mu_{g}dt
=-\int_{\mathbb{B}_{T}}\zeta\langle\nabla\phi_{t},\nabla\phi\rangle_{g} d\mu_{g}dt
\geq \frac{1}{2}\int_{\mathbb{B}_{T}}\zeta_{t}\langle\nabla\phi,\nabla\phi\rangle_{g} d\mu_{g}dt.
$$
Therefore
$$
 \frac{1}{2}\int_{\mathbb{B}_{T}}\zeta_{t}\langle\nabla\phi,\nabla\phi\rangle_{g} d\mu_{g}dt+\int_{\mathbb{B}_{T}}\zeta\alpha_{\varepsilon}(\Delta\phi)^{2} d\mu_{g}dt\leq \int_{\mathbb{B}_{T}}\zeta\langle\nabla\omega,\nabla\phi\rangle_{g} d\mu_{g}dt.
$$
From Cauchy's inequality we conclude that
$$
\int_{\mathbb{B}_{T}}\langle\nabla\phi,\nabla\phi\rangle_{g} d\mu_{g}dt+\int_{\mathbb{B}_{T}}\alpha_{\varepsilon}(\Delta\phi)^{2} d\mu_{g}dt\leq C_{1} \int_{\mathbb{B}_{T}}\langle\nabla\omega,\nabla\omega\rangle_{g} d\mu_{g}dt,
$$
for certain $C_{1}>0$. 
Hence we deduce from \eqref{esstt1}-\eqref{esdtr2} that
\begin{gather}\label{esstt123}
\int_{\mathbb{B}_{T}}(u_{1}-u_{2})\omega d\mu_{g}dt
\leq \sqrt{C_1}\Big(\int_{\mathbb{B}_{T}}\langle\nabla\omega,\nabla\omega\rangle_{g} d\mu_{g}dt,\Big)^{\frac12}\Big(\int_{\mathbb{B}_{T}}\frac{|\alpha-\alpha_{\varepsilon}|^{2}}{\alpha_{\varepsilon}}|u_{1}-u_{2}|^{2} d\mu_{g}dt\Big)^{\frac{1}{2}}.
\end{gather}

By \eqref{contint} and Lemma \ref{abr} each $u_{i}$, $u_{i}^{m}$, $i=1,2$, belongs to 
$$
\bigcap_{\varepsilon>0}C([0,T];\mathcal{H}_{p}^{s+2-\frac{2}{q}-\varepsilon,\gamma+2-\frac{2}{q}-\varepsilon}(\mathbb{B})\oplus\mathcal{E}_0)\hookrightarrow C([0,T];C(\B))
$$
and therefore is square integrable on $\mathbb{B}_{T}$ with respect to $d\mu_{g}dt$. 
If we knew that $0<\varepsilon\le\alpha\le K$, we could deduce immediately from 
\eqref{esstt123} that $u_1\le u_2$ by letting $\alpha_\varepsilon \to \alpha$.
In the general case, one uses an approximation argument, which is the same as in the classical case. 
\end{proof}

From the above comparison principle we obtain the following analog for the problem \eqref{v1}-\eqref{v2}.

\begin{theorem}\label{comp2}
Let $s$, $\gamma$, $p$, $q$ be chosen as in \eqref{gamma}, \eqref{pq1} and \eqref{sigma}. 
Given initial data $w_{0,1}, w_{0,2}\in C(\B)$ satisfying $c_{0}\leq w_{0,1}\leq w_{0,2}$ on $\mathbb{B}$, for some $c_{0}>0$, let $w_{1},w_{2}$ be two solutions of \eqref{v1} 
with initial values $w_{0,1}$ and $w_{0,2}$, respectively, such that $(w_{0,1},w_{1})$ satisfies {\rm (C1)} or {\rm (C2)} and $(w_{0,2},w_{2})$ also satisfies {\rm (C1)} or {\rm (C2)}. 
Then $c_{0}\leq w_{1}\leq w_{2}$ on $[0,T]\times\mathbb{B}$.

In particular, if $w$ with the regularity as before is a solution of \eqref{v1} with initial value $w_{0}\in X_{\frac{1}{q},q}^{s}$ satisfying $c_{0}\leq w_{0}\leq c_{1}$ on $\mathbb{B}$ for suitable constants $c_{0}, c_{1}>0$, then $c_{0}\leq w\leq c_{1}$ on $[0,T]\times\mathbb{B}$.
\end{theorem}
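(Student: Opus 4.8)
The plan is to deduce Theorem \ref{comp2} from the comparison principle Theorem \ref{p1} for the original equation \eqref{e1} by means of the substitution $u=w^{1/m}$, passing between the two formulations via Remark \ref{equivalence}. The only point requiring care is that positivity of the solutions $w_i$ is \emph{not} part of the hypotheses (C1)/(C2) --- only $w_{0,i}\ge c_0>0$ is given --- so $w_i\mapsto w_i^{1/m}$ is a priori available only for short time; this gap is closed by a continuation argument.

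First I would note that, by (C1)/(C2), each $w_i$ lies in $C([0,T];C(\B))$, i.e.\ is a continuous function on the compact set $[0,T]\times\B$ with $w_i(0,\cdot)=w_{0,i}\ge c_0>0$. Fix $c_0'\in(0,c_0)$ and put $I=\{t\in[0,T]\mid w_1,w_2\ge c_0'$ on $[0,t]\times\B\}$. By uniform continuity of the $w_i$ and $w_{0,i}\ge c_0>c_0'$, the set $I$ is a closed subinterval of $[0,T]$ containing a neighborhood of $0$; set $\tau=\sup I\in I$. On $[0,\tau]\times\B$ the $w_i$ take values in a compact subset of $(0,\infty)$, so $\lambda\mapsto\lambda^{1/m}$ is holomorphic near their ranges, and Remark \ref{equivalence} (in the (C1) setting) --- together with the closedness of $\mathcal H^{s,\gamma}_p(\B)\oplus\mathcal E_0$ under holomorphic functional calculus from Lemma \ref{abr}, which supplies the analogous conclusion in the (C2) setting --- shows that $u_i:=w_i^{1/m}$ belongs on $[0,\tau]$ to the same regularity class as $w_i$. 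Since on $[0,\tau]$ one has $\partial_t u_i=\frac{1}{m}w_i^{1/m-1}\partial_t w_i=\frac{1}{m}w_i^{1/m-1}\cdot m w_i^{(m-1)/m}\Delta w_i=\Delta w_i=\Delta(u_i^m)$, the function $u_i$ solves \eqref{e1} on $[0,\tau]$ with initial value $u_{0,i}=w_{0,i}^{1/m}$.

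Since $\lambda\mapsto\lambda^{1/m}$ is increasing on $(0,\infty)$, the assumption $c_0\le w_{0,1}\le w_{0,2}$ gives $c_0^{1/m}\le u_{0,1}\le u_{0,2}$, so Theorem \ref{p1}, applied on $[0,\tau]$, yields $c_0^{1/m}\le u_1\le u_2$ on $[0,\tau]\times\B$; raising to the $m$-th power, $c_0\le w_1\le w_2$ on $[0,\tau]\times\B$, and in particular $w_i\ge c_0>c_0'$ there. If $\tau<T$, then $w_i(\tau,\cdot)\ge c_0>c_0'$ and, by continuity of $w_i$ at $\tau$, $w_i\ge c_0'$ on $[0,\tau+\delta]\times\B$ for some $\delta>0$, contradicting $\tau=\sup I$. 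Hence $\tau=T$ and $c_0\le w_1\le w_2$ on $[0,T]\times\B$. For the final assertion, the constant $c_1$ (and likewise $c_0$) belongs to $\mathcal E_0\subset X_1^s$ and is a time-independent solution of \eqref{v1} satisfying (C1); comparing $w$ with $c_1$ then gives $c_0\le w\le c_1$ on $[0,T]\times\B$.

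The main obstacle is precisely the a priori lower bound on the $w_i$ handled by the continuation argument above; everything else is a routine transfer through Remark \ref{equivalence}. A secondary point to check is that Remark \ref{equivalence}, formulated there for the maximal $L^q$-regularity class (C1), also carries the regularity of $w_i$ over to $u_i=w_i^{1/m}$ in the continuous-regularity class (C2) --- but this uses only the multiplier property, the closedness under holomorphic functional calculus, and the estimate \eqref{poly} from Lemma \ref{abr}, exactly as in the (C1) case.
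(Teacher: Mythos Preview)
Your proof is correct and takes essentially the same approach as the paper: reduce to Theorem \ref{p1} via $u_i=w_i^{1/m}$, using a continuation argument to ensure the positivity needed for the substitution. The paper separates into three cases according to which of (C1)/(C2) each pair satisfies; in the (C2)--(C2) case it secures the lower bound by a small detour---constructing short-time (C1) solutions of \eqref{e1} at a time $\tau_0>0$ and identifying them with the $w_i$ via the uniqueness in Theorem \ref{t22}---before verifying that $w_i^{1/m}$ is again of class (C2). Your unified $\sup I$ argument handles all combinations at once and is slightly more direct; the ingredients you cite for the (C2) regularity transfer (the multiplier property and closedness under holomorphic functional calculus from Lemma \ref{abr}) are exactly those the paper spells out.
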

\begin{proof}
We examine the three cases separately. Assume first that both $(w_{0,1},w_1)$ and $(w_{0,2},w_2)$ satisfy (C1). By continuity, $w_1(t)$ and $w_2(t)$ will be strictly positive for some time. We claim that in fact both will be $\ge c_0$ for all $t\in [0, T]$. 
Indeed, let $\epsilon>0$ and suppose that there is a first time $\tau_0\in(0,T]$ such that, say, $w_1(\tau_0)$ attains the value $c_0-\epsilon>0$. By Remark \ref{equivalence}, $u_1=w_1^{\frac{1}{m}}$ and $u_2=w_2^{\frac{1}{m}}$ furnish solutions of \eqref{e1} on $[0,\tau_0]\times \mathbb B$ with initial data $w_{0,1}^{\frac{1}{m}},w_{0,2}^{\frac{1}{m}}\ge c_0^{\frac{1}{m}}$. 
The comparison principle then implies that $u_1(\tau_0)\ge c_0^{\frac{1}{m}}$ and hence $w_1(\tau_0)\ge c_0$, which is a contradiction. We can now apply Remark \ref{equivalence} once more to conclude that $c_0\le w_1\le w_2$ on $[0,T]\times \mathbb B$.

Next, assume that both $(w_{0,1},w_1)$ and $(w_{0,2},w_2)$ satisfy (C2). We claim again that both $w_1$ and $w_2$ will be $\ge c_0$ for all $t$. Indeed suppose that for some $\epsilon>0$ one of them attains the value $c_0-2\epsilon>0$. Choose the largest $\tau_0\in (0,T)$ such that $w_1(t), w_2(t)\ge c_0-\epsilon$ for $0\le t\le \tau_0$. As $w_1(\tau_0)$ and $w_2(\tau_0)$ both belong to $X_1^s$, we find short time solutions
$$
v_1,v_2\in W^{1,q}(\tau_0,\tau_0+T_0; X^s_0) \cap L^q(\tau_0,\tau_0+T_0; X_1^s)
$$
for some $T_0>0$, solving \eqref{e1} with initial values at $\tau_0$ given by $w_{1}^{\frac{1}{m}}(\tau_0)$ and $w_{2}^{\frac{1}{m}}(\tau_0)$, respectively. Note that both belong to $X_1^s$, since this space is closed under holomorphic functional calculus. The comparison principle, Theorem \ref{p1}, implies that $v_1(t),v_2(t) \ge (c_0-\epsilon)^{\frac{1}{m}}$ for $\tau_0\le t\le \tau_0+T_0$. Then $v_1^m$ and $v_2^m$ belong to 
$$
W^{1,q}(\tau_0,\tau_0+T_0; X^s_0) \cap L^q(\tau_0,\tau_0+T_0; X_1^s)
$$
by Remark \ref{equivalence} and solve \eqref{v2} with initial values $w_1(\tau_0)$ and $w_2(\tau_0)$, respectively. By uniqueness of Theorem \ref{t22}, $v_1^m=w_1$ and $v_2^m=w_2$. Consequently, $w_1(t), w_2(t)\ge c_0-\epsilon$ for $\tau_0\le t\le \tau_0+T_0$, contradicting the maximality of $\tau_0$. 

Lemma \ref{abr} implies that 
$$
w_1^{\frac1m}, w_2^{\frac1m}, w_1^{\frac{1-m}m},w_2^{\frac{1-m}m}\in C((0,T];X_{1}^{s})\cap \bigcap_{\varepsilon>0}
C([0,T]; \mathcal{H}^{s+2-\frac{2}{q}-\varepsilon, \gamma+2-\frac{2}{q}-\varepsilon}_p(\mathbb{B})).
$$
From the identity $\partial_{t}(w_{1}^{\frac{1}{m}}) = \frac{1}{m}w_{1}^{\frac{1-m}{m}}\partial_{t}w_{1}$ and Lemma \ref{abr} we conclude that $w_1^{\frac{1}{m}}\in C^1((0,T]; X_0^s)$. 
Hence $w_1^{\frac{1}{m}}$ and, by the same argument, $w_2^{\frac{1}{m}}$ belong to the class (C2). 
Moreover, they solve \eqref{e1} with initial values $w_{0,1}^{\frac1m}$ and $w_{0,2}^{\frac1m}$, respectively. Hence the comparison principle Theorem \ref{p1} implies the assertion. 

Finally, assume that $(w_{0,1},w_{1})$ satisfy (C1) and $(w_{0,2},w_{2})$ satisfy (C2). By the first step, $c_{0}\leq w_{1}$ on $[0,T]\times\B$ and hence, by Remark \ref{equivalence} we have that $(w_{0,1}^{\frac{1}{m}},w_{1}^{\frac{1}{m}})$ is a (C1) solution to \eqref{e1}-\eqref{e2} on $[0,T]\times\B$. Similarly, by the second step $c_{0}\leq w_{2}$ on $[0,T]\times\B$ and $(w_{0,2}^{\frac{1}{m}},w_{2}^{\frac{1}{m}})$ is a (C2) solution of \eqref{e1}-\eqref{e2} on $[0,T]\times\B$. Then, the result follows by Theorem \ref{p1} applied to $(w_{0,1}^{\frac{1}{m}},w_{1}^{\frac{1}{m}})$ and $(w_{0,2}^{\frac{1}{m}},w_{2}^{\frac{1}{m}})$.
\end{proof}

We denote by $(g^{ij})=(g_{ij})^{-1}$ and $(h^{ij})=(h_{ij})^{-1}$ the inverses of the metric tensors of $g$ and $h$ in local coordinates. In local coordinates $(z^1,\ldots, z^{n+1})$, the gradient $\nabla u$ of a scalar function $u$ is defined as
$$
\nabla u = \sum_{i,j} g^{ij}\frac{\partial u}{\partial z^i} \frac{\partial}{\partial z^j}
$$ 
and the divergence of a vector field $F$ is given by 
$$
\Div F = \sqrt{\det [g]}^{-1} \sum_{i} \frac{\partial(F^i\sqrt{\det [g]}) }{\partial z^i}.
$$
Note that
\begin{eqnarray}\label{divergence}
\Div(uF) = u\Div F + \skp{\nabla u, F}_g \quad \text{where}\quad \skp{\nabla u, F}_g= \sum_{i} \frac{\partial u}{\partial z^i} F^i.
\end{eqnarray} 
In local coordinates $(x,y)$ close to the boundary 
\begin{gather*}
\langle \nabla u,\nabla v\rangle_{g}=\frac{1}{x^{2}}\Big((x\partial_{x}u)(x\partial_{x}v)+\sum_{i,j}h^{ij}\frac{\partial u}{\partial y^{i}} \frac{\partial v}{\partial y^{j}} \Big).
\end{gather*} 
Since $\Delta=\Div\nabla$, we also have 
\begin{gather}\label{Dm}
\Delta(u^{m})=\Div\nabla u^m = m\Div (u^{m-1}\nabla u).
\end{gather}
This allows us to rewrite the porous medium equation \eqref{e1}-\eqref{e2} in the form 
\begin{eqnarray}\label{e3}
u'(t)-m \Div(u^{m-1}\nabla u)&=&0, \quad t>0,\\
\quad u(0)&=&u_{0}.
\end{eqnarray} 

\begin{theorem}[Hölder continuity] \label{Holder}
Let $s$, $\gamma$, $p$, $q$ be chosen as in \eqref{gamma}, \eqref{pq1} and \eqref{sigma}, $u_0\in X^s_{\frac{1}{q},q}$ be strictly positive and let $u\in W^{1,q}(0,T;X_0^s)\cap L^{q}(0,T;X_1^s)$ be a solution of \eqref{e1} with initial value $u_0$. 
Then $u$ is $\alpha$-Hölder continuous on $\B$, uniformly in $[0,T]$ 
and $\alpha/2$-Hölder continuous on $[0,T]$, uniformly in $\B$, 
for some $\alpha>0$. 
Both $\alpha$ and the Hölder norms of $u$ depend only on the dimension $n$, the initial value $u_0$ and the metric $g$. 
\end{theorem}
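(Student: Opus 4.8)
The plan is to reduce the equation, via the comparison principle, to De Giorgi--Nash--Moser regularity theory for a \emph{uniformly} parabolic divergence-form equation on the conic metric measure space $(\B,d_g,\mu_g)$, the crucial point being that all constants produced by that theory are independent of the time horizon $T$.

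First I would use the continuity of the initial datum. By \eqref{int}, $u_0\in X^s_{\frac1q,q}\hookrightarrow C(\B)$; moreover $u_0$ is H\"older continuous on $(\B,d_g)$, as one sees by writing $u_0=u_{0,\mathcal{E}_0}+u_{0,\mathcal H}$ with $u_{0,\mathcal{E}_0}$ locally constant near $\partial\B$ and $u_{0,\mathcal H}\in\cH^{s',\gamma'}_p(\B)$, $s'=s+2-\tfrac2q-\varepsilon$, $\gamma'=\gamma+2-\tfrac2q-\varepsilon$, and invoking the Mellin--Sobolev embedding into H\"older spaces (here $s'>\tfrac{n+1}p$ by \eqref{sigma}, and $\gamma'-\tfrac{n+1}2>0$ since $\gamma>\tfrac{n-3}2+\tfrac2q$, so that the $\cH$-part vanishes H\"older-continuously at the conical points, cf.\ Lemma \ref{abr}(a) applied also to the cone derivatives). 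Put $c_0=\min_\B u_0>0$ and $c_1=\max_\B u_0<\infty$. The comparison principle, Theorem \ref{p1}, then gives $c_0\le u\le c_1$ on $[0,T]\times\B$, so $u$ solves the linear equation $\partial_t u=\Div(a\nabla u)$ of \eqref{e3} with $a=mu^{m-1}$ obeying $\Lambda^{-1}\le a\le\Lambda$ for some $\Lambda>0$ depending only on $c_0,c_1,m$. Using the Mellin--Sobolev regularity of $u$ together with the bound $c_0\le u\le c_1$, one checks that $u$ is a legitimate weak solution of this equation for the conic structure, i.e.\ $u,\partial_t u\in L^2_{loc}(d\mu_g\,dt)$ and $\nabla_g u\in L^2_{loc}(d\mu_g\,dt)$, also across $\partial\B$ (near $\partial\B$ the locally constant part drops out of the gradient, and the $\cH$-part lies in a weighted $\cH^1_2$-space).

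The key geometric input is that the compact length space $(\B,d_g,\mu_g)$ satisfies the volume doubling property and a scale-invariant $L^2$-Poincar\'e inequality on metric balls, with constants depending only on $g$. Away from $\partial\B$ this is the standard fact for smooth compact Riemannian manifolds; near a conical point it follows, after the dilation $x\mapsto x/x_0$ which preserves doubling and Poincar\'e constants, from the corresponding uniform statement for truncated metric cones over the cross-sections $(\partial\B,h(x))$, whose metrics converge to $h(0)$ as $x\to0$. Granting this, parabolic De Giorgi--Nash--Moser theory on doubling Poincar\'e spaces (parabolic Harnack inequality, hence local H\"older continuity of solutions) applies to $u$: there is an exponent $\alpha=\alpha(\Lambda,g,n)>0$ such that on any parabolic cylinder $(\tau-R^2,\tau)\times B_R(z)\subset(0,T)\times\B$ the solution is $\alpha$-H\"older in space and $\tfrac\alpha2$-H\"older in time, with seminorms controlled by $\alpha$, the doubling and Poincar\'e constants, and $\sup_{[0,T]\times\B}|u|\le c_1$, in particular independent of $T,p,q,s$. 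Taking $R$ comparable to $\min\{\mathrm{diam}\,\B,\sqrt\tau\}$ yields the asserted estimates on $[\delta,T]\times\B$ for every $\delta>0$, uniformly.

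Finally, to reach the initial time $t=0$ I would combine this interior estimate with the H\"older regularity of $u_0$ obtained above, through the standard parabolic estimate up to the initial surface for uniformly parabolic equations with H\"older data (equivalently, by building sub- and supersolutions near $t=0$ from the initial H\"older modulus and the Harnack constant). This upgrades the estimate to all of $[0,T]\times\B$, with H\"older exponents $\min\{\alpha,\alpha_0\}$ in space and $\tfrac12\min\{\alpha,\alpha_0\}$ in time and norms depending only on $\Lambda$, the geometry, and $u_0$, still $T$-independent; relabelling the exponent gives the statement. The main obstacle is precisely the interplay of the last two steps: ensuring that the De Giorgi--Nash--Moser machinery is available \emph{uniformly up to the conical points} — the doubling and Poincar\'e properties of conic manifolds and the verification that the maximal-regularity solution is a bona fide weak solution of the degenerate equation there — while keeping every constant independent of $T$, which is exactly what the long time existence argument will require.
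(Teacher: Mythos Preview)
Your proposal is sound in outline and would yield the statement, but it takes a genuinely different route from the paper's proof. The paper does \emph{not} invoke doubling and Poincar\'e on the metric measure space $(\B,d_g,\mu_g)$ or the abstract De~Giorgi--Nash--Moser/Harnack machinery on such spaces. Instead it follows the classical Lady\v zenskaya--Solonnikov--Ural'ceva framework in local coordinates: after using the comparison principle exactly as you do to obtain $M_{\min}\le u\le M_{\max}$, it verifies directly that $u$ (and $-u$) belongs to the class $\widehat{\mathfrak B}(U\times[0,T),M,\tau,r,\delta,b)$ of \cite[Section~II.8]{LSU}, by testing the equation against $\varphi^2(u-\kappa)^+$ with $\varphi\in C^\infty_c(U\times[0,T))$ and establishing the integral inequalities \cite[II.(7.5)]{LSU}. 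The only conic-specific ingredient is checking that the boundary term from the divergence theorem vanishes on $\partial\B$, which follows from the weighted decay $|\nabla u|=O(x^{-1+\eta})$ against the volume factor $x^n\,dy$. Then \cite[Theorem~II.8.2]{LSU} delivers the H\"older estimate with constants depending only on the LSU parameters, hence only on $u_0$, $n$, $g$.

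What each approach buys: the paper's argument is shorter and entirely self-contained modulo a single classical reference, and it sidesteps both the verification of doubling/Poincar\'e near the conical points and the separate treatment of the initial surface that you flag as the ``main obstacle'' (the LSU class and Theorem~II.8.2 already handle cylinders touching $t=0$). Your route is more intrinsic and geometric, and would generalise cleanly to rougher coefficients or settings where local coordinate computations are awkward; it also makes transparent \emph{why} the conic geometry is harmless (doubling and Poincar\'e are dilation-invariant). The cost is that you must import a fairly heavy external theory (parabolic Harnack on doubling Poincar\'e spaces, e.g.\ Sturm or Grigor'yan--Saloff-Coste) and carry out the two verifications you yourself identify, whereas the paper needs neither.
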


This is a consequence of a result by Lady\v zenskaya, Solonnikov and Ural'ceva. 
In Section II.8 of \cite{LSU} they introduce a class 
$\widehat{\mathfrak B}(U\times [0,T), M,\tau, r,\delta, b)$ of functions satisfying 
certain integral inequalities. 
Here $U$ is a subset of $\R^{n+1}$, $M=\sup_{U\times [0,T)} |u|$, and $\tau, r,\delta$ and $b$ are real parameters. 
These integral inequalities, stated in II.(7.1) and II.(7.2) and, equivalently, in II.(7.5)
of \cite{LSU} for a function $u$ are formulated 
in terms of the functions $(\pm u-\kappa)^+ = \max\{(\pm u-\kappa), 0\} $ and a cut-off function $\varphi$. Here $\kappa\in \R$ is arbitrary with $\sup u-\kappa\le \delta$ (correspondingly $\sup (-u)-\kappa\le \delta$); the supremum is taken over the integration domain.

It then turns out that the elements of 
$\widehat{\mathfrak B}(U\times [0,T), M,\tau, r,\delta, b)$ are Hölder continuous 
(for the precise definition of the Hölder spaces and their norms see 
\cite[(1.10), (1.11), (1.12)]{LSU}). Moreover, Theorem II.8.2 states that both the 
Hölder exponent and the Hölder norm of the functions 
on a smaller space time cylinder are determined by the parameters 
$M,\tau, r,\delta$ and $b$. 

The proof, below, is inspired by Amann's proof of \cite[Lemma 5.1]{Am3}. 
We shall show that every solution to the porous medium equation 
with initial value $u_0$ satisfying $0<M_{\min}\le u_0\le M_{\max}$ belongs to the class $\widehat{\mathfrak B}(U\times [0,T), M,\tau, r,\delta, b)$ 
with $M=M_{\max}$, $r=2(n+3)/(n+1)$, $b = 2/(n+1)$; $\delta$ and $\tau$
will be chosen suitably, depending only on $M_{\min}, M_{\max}$ and $\B$. 

\begin{proof}
{\def\uph{u^+_\kappa}
We choose a function $\gvp\in C^\infty_c( U\times [0,T))$ with values in $[0,1]$ 
and a constant $\kappa>0$. Then we write $\uph = (u-\kappa)^+$ (where $\xi^{+}=\max\{\xi,0\}$, $\xi\in\R$), multiply the solution $u$ of the porous medium equation with $\gvp^2\uph$ and integrate over $\B\times [t_0,t_1]$, $0\le t_0\le t_1<T$, with the volume measure $d\mu_g$ with respect to $g$ on $\B$. Using that $\partial \uph$ equals $\partial u$ on $\{u>\kappa\}$ and $0$ else, for arbitrary derivatives $\partial$ with respect to space or time, we find 
\begin{eqnarray}
\lefteqn{\int_{t_0}^{t_1} \int_\B \gvp^2 \uph\, \partial_t u \, d\mu_{g} dt 
=
\int_{t_0}^{t_1} \int_\B \gvp^2\frac12 \partial_t(\uph)^2\, d\mu_{g} dt \nonumber}\\
&=&
\frac12 \int_{t_0}^{t_1} \int_\B \partial_t(\gvp^2(\uph)^2)\, d\mu_{g} dt
- \int_{t_0}^{t_1} \int_\B\gvp \partial_t\gvp (\uph)^2\, d\mu_{g} dt\nonumber\\
&=&\frac12 \|\gvp\uph\|^2_{L^2(\B)}|_{t_0}^{t_1} 
- \int_{t_0}^{t_1} \int_\B\gvp \partial_t\gvp (\uph)^2\, d\mu_{g} dt\label{h.0}.
\end{eqnarray}
Moreover, by \eqref{divergence} and the divergence theorem (denoting the exterior normal in $\partial\B$ by $\nu$) 
\begin{eqnarray}
\lefteqn{\int_{t_0}^{t_1} \int_\B \gvp^2\, \uph m\Div ( u^{m-1} \nabla u) \, d\mu_{g} dt }
\nonumber\\
&=& \int_{t_0}^{t_1} \int_\B \Div(\gvp^2\, \uph m u^{m-1} \nabla u) \, d\mu_{g} dt 
-\int_{t_0}^{t_1} \int_\B \skp{\nabla(\gvp^2\, \uph), m u^{m-1} \nabla u }_g \, d\mu_{g} dt \nonumber\\
&=& \int_{t_0}^{t_1} \int_{\partial\B} \skp{\gvp^2\, \uph m u^{m-1} \nabla u,\nu}_g \, d\mu_{h} dt \label{h.1}\\
&&-2\int_{t_0}^{t_1} \int_\B \gvp \, \uph m u^{m-1} \skp{ \nabla \gvp,\nabla u}_g \, d\mu_{g} dt 
\label{h.2}\\
&&- \int_{t_0}^{t_1} \int_\B \gvp^2 m u^{m-1}\skp{\nabla \uph, \nabla u}_g \, d\mu_{g} dt. 
\label{h.3}
\end{eqnarray}

In the integral \eqref{h.1}, $d\mu_h$ denotes the induced volume measure on $\partial \B$. The function $\gvp^2\, \uph u^{m-1}$ is continuous up to $\partial \B$ and thus bounded.
The normal vector $\nu$ is also bounded; it coincides with $\partial_x$. The gradient $\nabla u$ belongs to $\cH^{s+1-\frac{2}{q}-\varepsilon, \gamma +1-\frac{2}{q}-\varepsilon}_p(\B)$, for any $\varepsilon>0$. By \cite[Corollary 2.5]{RS1} it is therefore $O(x^{-1+\eta})$ for $\eta=\gamma- \frac{n-3}{2}-\frac{2}{q}>0$. The volume measure, on the other hand, is $\sqrt{\det[h]}x^n\, dy$, $n\ge1$. Hence this integral vanishes. 

Concerning the integral \eqref{h.3} we notice that 
$$
\skp{\nabla \uph,\nabla u}_g = \skp{\nabla \uph,\nabla \uph}_g= |\nabla \uph|_g^2\ge c |\nabla \uph|_{\text{eucl}}^2
$$
for a suitable constant $c$ depending only on $\mathbb B$; here $|v|_{\text{eucl}}$ is the Euclidean norm of a vector, which appears in \cite[II.(7.5)]{LSU}. We next recall that the comparison principle Theorem \ref{p1} implies that 
$M_{\min}\le u(z,t)\le M_{\max}$ for all $z\in \B$ and $t_0\le t\le t_1$. 

Hence the integral \eqref{h.3} can be estimated from below by 
$c_{0}\|\gvp\, \nabla \uph\|^2_{L^2(\B\times[t_0,t_1])}$ for a suitable constant $c_{0}= c_{0}(u_0,c)$. Bringing this term to the left hand side, we obtain
with \eqref{h.0}
\begin{eqnarray}
\lefteqn{\frac12 \|\gvp\uph\|^2_{L^2(\B)}|_{t_0}^{t_1} 
+ c_{0} \|\gvp\, \nabla \uph\|^2_{L^2(\B\times[t_0,t_1])}\nonumber}\\
&\le& \int_{t_0}^{t_1} \int_\B\gvp |\partial_t\gvp| (\uph)^2\, d\mu_{g} dt\label{h.4}\nonumber\\
&&+2\int_{t_0}^{t_1} \int_\B \gvp \, \uph m u^{m-1} \left|\skp{ \nabla \gvp,\nabla u}_g \right|\, d\mu_{g} dt .\label{h.5}
\end{eqnarray} 
In order to estimate the integral \eqref{h.5} we first notice that $\uph\nabla u = \uph\nabla \uph$ and that, by Cauchy's inequality, 
$$ 
\gvp\, \uph\left|\skp{\nabla \gvp,\nabla \uph}_g\right| \le \sigma \gvp^2|\nabla\uph|^2 + \frac1{4\sigma} |\nabla\gvp|^2(\uph)^2
$$
for arbitrary $\sigma>0$. Choose $\sigma$ so small that $\sigma mu^{m-1} < \frac{c_{0}}4.$ Then 
\begin{eqnarray*}
\lefteqn{2\int_{t_0}^{t_1} \int_\B \gvp \, \uph m u^{m-1}| \skp{ \nabla \gvp,\nabla u}_g| \, d\mu_{g} dt }\\
&\le& \frac{c_{0}}2 \int_{t_0}^{t_1} \int_\B \gvp^2|\nabla \uph|^2 \,d\mu_{g} dt
+ \frac{c_{0}}{8\sigma^{2}}\int_{t_0}^{t_1} \int_\B |\nabla \gvp|^2(\uph)^2\, d\mu_{g} dt.
\end{eqnarray*}
Combining this with \eqref{h.5}, we obtain
\begin{eqnarray*}
\lefteqn{\frac12 \|\gvp\uph\|^2_{L^2(\B)}|_{t_0}^{t_1} 
+ \frac{c_{0}}{2} \|\gvp\, \nabla \uph\|^2_{L^2(\B \times[t_0,t_1])}\nonumber}\\
&\le&(\frac{c_{0}}{8\sigma^{2}}+1)\int_{t_0}^{t_1} \int_\B(\gvp|\partial_t\gvp| + |\nabla \gvp|^2)(\uph)^2\, d\mu_{g} dt.
\end{eqnarray*}
Hence the function $u$ satisfies the estimates \cite[II.(7.5)]{LSU}. In the same way, the estimates are satisfied with $u$ replaced by $-u$. Therefore $u$
belongs to the class $\widehat{\mathfrak B}(U \times[0,T), M,\tau, r,\delta, b)$
with the above choices of the parameters.
} 
\end{proof}

An immediate consequence of the above H\"older estimate is the uniform boundedness in $T$ of the $R$-sectorial bound of the linearized term in \eqref{v1}, as we can deduce from the following result. 

\begin{proposition}\label{Rbound}
Let $s=0$ and $\gamma$, $p$, $q$, be chosen as in \eqref{gamma} and \eqref{pq1}. Let $w\in W^{1,q}(0,T;X_0^0)\cap L^q(0,T; X_1^0)$ be the short time solution to the problem \eqref{v1}-\eqref{v2} with initial value $w_{0}\in X_{\frac{1}{q},q}^{0}$, $c^{-1}\leq w_{0}\leq c$, $c>0$, which exists by Theorem $\ref{t22}$ for some $T>0$. For fixed $t\in [0,T)$ consider the operator $\alpha_{t}\underline{\Delta}_{0}: X_{1}^{0}\rightarrow X_{0}^{0}$, where $\alpha_{t}=mw^{\frac{m-1}{m}}(t)$. Then, for every $\theta\in[0,\pi)$ there exists some $c_{0}>0$, independent of $t\in[0,T)$ and $T$ such that $c_{0}-\alpha_{t}\underline{\Delta}_{0}$ is $R$-sectorial of angle $\theta$ with $R$-bound uniformly bounded in $t$ and $T$.
\end{proposition}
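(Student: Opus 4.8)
The plan is to reduce the statement to the already-established $R$-sectoriality of $c-v\underline\Delta_0$ from Theorem \ref{6.1}, but with uniform control over the $R$-bound. The key point is that the $R$-bound produced in the proof of \cite[Theorem 6.1]{RS3} depends on $v$ only through a finite list of quantities: the ellipticity constant (a lower bound for $\Re v$), an upper bound for $\|v\|_{\infty}$, and — crucially — the modulus of continuity of $v$, since the freezing-of-coefficients construction requires the neighborhoods $B_j$ to be chosen so small that $v$ differs from $v(z_j)$ by a multiplier of small supremum norm on each $B_j$. So the first step is to extract from that proof the precise statement: there is a function $\mathcal R(\cdot,\cdot,\cdot)$ such that if $\alpha\le \Re v$, $\|v\|_\infty\le \beta$, and $v$ has modulus of continuity dominated by a fixed $\omega(\cdot)$, then $c_0-v\underline\Delta_0$ is $R$-sectorial of angle $\theta$ with $R$-bound $\le \mathcal R(\alpha,\beta,\omega)$, with $c_0$ depending only on the same data.

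Next I would apply this with $v=\alpha_t=mw^{(m-1)/m}(t)$. The comparison principle, Theorem \ref{comp2}, gives $c^{-1}\le w(t)\le c$ on $[0,T]\times\mathbb B$ (the constants being exactly those of the initial value and hence independent of $T$ and $t$), so $\alpha_t$ is bounded above and below by positive constants depending only on $m$ and $c$. For the modulus of continuity I invoke Theorem \ref{Holder}: $u=w^{1/m}$ solves \eqref{e1} with strictly positive initial value, so $u(t)$ — and hence, by the composition estimates in Lemma \ref{abr} together with the uniform two-sided bounds, also $\alpha_t=mu^{m-1}(t)$ — is $\alpha$-Hölder continuous on $\mathbb B$ with Hölder constant depending only on $n$, $u_0$ (equivalently $w_0$) and $g$, hence independent of $t$ and $T$. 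Thus the triple $(\alpha,\beta,\omega)$ controlling $\alpha_t$ is the same for all $t\in[0,T)$ and all $T$, and the uniform $R$-bound follows.

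The main obstacle is the first step: the statement of Theorem \ref{6.1} as quoted gives, for each $v$, \emph{some} $R$-bound, with no explicit dependence, so one has to reopen the freezing-of-coefficients argument and track what the constants actually depend on. The delicate point is the Neumann series step, where the full resolvent of $c-v\underline\Delta_0$ is assembled from a partition of unity and the local resolvents; one must check that the number $N$ of neighborhoods, the cut-off functions, and the smallness threshold for the local oscillation of $v$ can all be fixed in terms of $\alpha$, $\beta$, the metric $g$, and the Hölder data only — the Hölder modulus being precisely what lets one choose the $B_j$ uniformly small enough independently of $t$. Once that bookkeeping is done, convergence of the Neumann series in the $R$-bounded operator topology is uniform, and the conclusion is immediate. (Note $s=0$ here, so no interpolation/induction on $s$ is needed, which simplifies matters.)
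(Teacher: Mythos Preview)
Your proposal is correct and follows essentially the same route as the paper: use the comparison principle to get uniform two-sided bounds on $w(t)$, invoke Theorem \ref{Holder} to obtain a H\"older modulus for $u(t)=w^{1/m}(t)$ (and hence for $\alpha_t=mu^{m-1}(t)$) that is independent of $t$ and $T$, and then observe that for $s=0$ the multiplier norm on $X_0^0$ is the sup-norm, so the freezing-of-coefficients construction from the proof of Theorem \ref{6.1} can be run with neighborhoods, partition of unity, and Neumann series all chosen uniformly in $t$ and $T$. Your explicit bookkeeping of the dependence of the $R$-bound on $(\alpha,\beta,\omega)$ is a helpful clarification, but not a different argument.
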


\begin{proof}
By Theorem \ref{p1}, we have that $c^{-1}\leq w(t)\leq c$ for all $t\in[0,T]$. By Remark \ref{equivalence}, \eqref{e1}-\eqref{e2} also has a unique solution $u\in W^{1,q}(0,T;X_0^0)\cap L^q(0,T; X_1^0)$ which satisfies $c^{-\frac{1}{m}}\leq u(t)\leq c^{\frac{1}{m}}$ for all $t\in[0,T]$ and is given by $u=w^{\frac{1}{m}}$.

We follow the idea in the sketch of the proof of Theorem \ref{6.1}, see the proof of \cite[Theorem 6.1]{RS3} for details. For $s=0$ the base space $X_0^0$ is the weighted $L^p$-space $\cH^{0,\gamma}_p(\B)$. 
Hence the norm of a function as a multiplier on $X_0^0$ is just the sup-norm. We know from Theorem \ref{Holder} that $u$ is Hölder continuous and that both the Hölder exponent and the Hölder norm depend only on $u_0$. 
As a consequence, $\alpha_t$ is also Hölder continuous, and its Hölder norm is independent of $t$ and $T$. 
It follows that the oscillation of $\alpha_t$ on one of the neighborhoods $B_j$ mentioned in the sketch will be small, provided the diameter is small (for $B_0$: provided the width of the collar is small). 
Hence the whole construction for the proof can be carried out uniformly in $t$ and $T$ and therefore the shift $c_{0}$ and the $R$-bound can be taken uniform in $t$ and $T$. 
\end{proof}

We proceed by establishing interpolation space estimates for the short time solution $w$. Our proof is based on a maximal $L^q$-regularity inequality for linear non-autonomous parabolic problems, which is obtained by operator valued functional calculus estimates. It is based on the proof of \cite[Theorem 4.2]{Ro} and the proof of \cite[Theorem 5.5]{Ro2}, where an application of this approach to a semilinear problem is demonstrated. 

\begin{theorem}[Maximal $L^q$-regularity space estimates]\label{xq}
Take $s=0$, $p$, $q$ as in \eqref{gamma}, \eqref{pq1} and $w_{0}\in X_{\frac{1}{q},q}^{0}$ strictly positive. There exists a positive function $C(\cdot)\in C(\mathbb{R})$ with the following property{\em:} for any $T>0$ such that there exists a unique solution $w\in W^{1,q}(0,T;X_0^0)\cap L^{q}(0,T;X_1^0)$ of \eqref{v1}-\eqref{v2} with initial data $w_{0}$ we have that
$$
\|w\|_{W^{1,q}(0,T;X_0^0)\cap L^{q}(0,T;X_1^0)}\leq C(T).
$$
\end{theorem}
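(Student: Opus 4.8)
The plan is to view \eqref{v1} as a linear non-autonomous Cauchy problem whose coefficient operator is, uniformly in $T$, $R$-sectorial and H\"older continuous in time, to apply a maximal $L^q$-regularity inequality for such problems whose constants depend only on these uniform data, and then to iterate the resulting short-time estimate over a partition of $[0,T]$ into intervals of a fixed length; this produces a bound $C(T)$ growing at most exponentially in $T$, hence extending to a positive element of $C(\R)$. First I would assemble the uniform ingredients. By Theorem \ref{comp2}, together with Remark \ref{equivalence} and Theorem \ref{p1}, the given solution satisfies $c^{-1}\le w(t,\cdot)\le c$ on $[0,T]\times\B$ with $c$ depending only on $w_0$; equivalently $u=w^{1/m}$ is bounded away from $0$ and $\infty$ with constants independent of $T$. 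By Theorem \ref{Holder}, $u$, and hence by Lemma \ref{abr} also $\alpha_t:=mw^{(m-1)/m}(t)=mu^{m-1}(t)$, is H\"older continuous on $[0,T]\times\B$ with exponent and norm depending only on $n$, $w_0$ and $g$. Thus $t\mapsto\alpha_t$, regarded as a curve of multipliers on $X_0^0$, and therefore $t\mapsto A(t):=c_0-\alpha_t\underline\Delta_0\in\cL(X_1^0,X_0^0)$, is H\"older continuous with a modulus independent of $T$; and by Proposition \ref{Rbound} there is a $c_0>0$, independent of $t$ and $T$, for which each $A(t)$ is $R$-sectorial of angle $>\frac{\pi}{2}$ with $R$-bound uniform in $t$ and $T$, and, being in $\cP(\theta)$, is boundedly invertible with $\|A(t)^{-1}\|$ uniformly bounded.

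Next, rewrite \eqref{v1}--\eqref{v2} as $w'+A(\cdot)w=c_0w$, $w(0)=w_0$. Following the arguments in the proofs of \cite[Theorem 4.2]{Ro} and \cite[Theorem 5.5]{Ro2} --- freeze the coefficient at a point $t_\ast\in I$, so that the autonomous operator $\partial_t+A(t_\ast)$ with zero initial data is boundedly invertible on $L^q(I;X_0^0)$, with a bound independent of $|I|$, thanks to the maximal $L^q$-regularity (Kalton--Weis, \cite[Theorem 6.5]{KW1}) and the invertibility of $A(t_\ast)$, and then absorb $A(\cdot)-A(t_\ast)$, whose $L^\infty(I;\cL(X_1^0,X_0^0))$-norm is small when $|I|$ is small by the uniform H\"older continuity, through a Neumann series --- one obtains a length $\tau>0$ and constants $C_1,C_2\ge1$, all depending only on the uniform $R$-bound and the uniform H\"older modulus from the previous paragraph and remaining valid when $\tau$ is decreased, such that for every interval $I=[a,b]\subseteq[0,T]$ with $b-a\le\tau$, every $f\in L^q(I;X_0^0)$ and every $v_a\in X^0_{\frac1q,q}$, the problem $v'+A(\cdot)v=f$ on $(a,b)$, $v(a)=v_a$, has a unique solution with
\[
\|v\|_{W^{1,q}(I;X_0^0)\cap L^q(I;X_1^0)}\le C_1\|v_a\|_{X^0_{\frac1q,q}}+C_2\|f\|_{L^q(I;X_0^0)}.
\]

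Applying this with $v=w$ (which, by uniqueness, is its restriction to $I$), $f=c_0w$ and $I=[a,a+\tau]$, and using the elementary inequality $\|w\|_{L^q(I;X_0^0)}\le\tau^{1/q}\|w(a)\|_{X_0^0}+\tau\|w'\|_{L^q(I;X_0^0)}$ together with $X^0_{\frac1q,q}\hookrightarrow X_0^0$, after shrinking $\tau$ so that $c_0C_2\tau<\frac12$ one absorbs the $c_0w$-term and finds that the norm of $w$ in $W^{1,q}(I;X_0^0)\cap L^q(I;X_1^0)$ is $\le C_3\|w(a)\|_{X^0_{\frac1q,q}}$. Combining this with the embedding of that space into $C(I;X^0_{\frac1q,q})$, whose norm is the same for all translates of $I$, gives $\|w(a+\tau)\|_{X^0_{\frac1q,q}}\le C_4\|w(a)\|_{X^0_{\frac1q,q}}$ for some $C_4\ge1$ depending only on the uniform data. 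Iterating from $a=0$ over the grid points $k\tau$, $0\le k\le\lceil T/\tau\rceil$, and adding the $q$-th powers of the maximal $L^q$-regularity norms over the pieces (which are additive, being $L^q$-in-time norms) yields
\[
\|w\|_{W^{1,q}(0,T;X_0^0)\cap L^q(0,T;X_1^0)}\le C_3\Big(\textstyle\sum_{k=0}^{\lceil T/\tau\rceil}C_4^{kq}\Big)^{1/q}\|w_0\|_{X^0_{\frac1q,q}}=:C(T),
\]
which is finite, depends only on $n$, $w_0$, $g$ and $T$, and grows at most exponentially in $T$; in particular $C(\cdot)$ extends to a positive continuous function on $\R$. (Alternatively, one may close the loop by Gronwall's inequality applied to $T\mapsto\|w\|_{W^{1,q}(0,T;X_0^0)\cap L^q(0,T;X_1^0)}^q$.)

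I expect the main obstacle to be the non-autonomous maximal $L^q$-regularity estimate of the third paragraph above: carrying it out so that the constants $\tau$, $C_1$, $C_2$ genuinely depend only on the uniform $R$-bound of Proposition \ref{Rbound} and the uniform H\"older modulus of $t\mapsto A(t)$, and verifying that no hidden $T$-dependence slips in --- through the trace estimates, the embedding constants on short intervals, or the number of freezing points --- so that the only $T$-dependence left in $C(T)$ is the explicit geometric sum coming from the $\lceil T/\tau\rceil$ iterations. The remaining steps reduce to citations (Theorems \ref{comp2}, \ref{Holder}, \ref{p1}, Proposition \ref{Rbound}, Lemma \ref{abr}, Remark \ref{equivalence}) and routine interpolation and embedding bookkeeping.
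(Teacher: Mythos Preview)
Your proposal is correct and uses the same underlying ingredients as the paper --- the uniform bounds from the comparison principle, the uniform H\"older continuity of Theorem \ref{Holder}, and the $t$- and $T$-independent $R$-sectoriality of Proposition \ref{Rbound} --- but it organizes the argument differently. The paper does not iterate over a partition of $[0,T]$. Instead it introduces the auxiliary function
\[
v(t)=e^{-ct}\big(e^{-c_0t}w(t)-e^{-tA(0)}w_0\big),
\]
which solves $v'+(c+A(\cdot))v=e^{-ct}(A(0)-A(\cdot))A(0)^{-1}A(0)e^{-tA(0)}w_0$ with zero initial data, and applies the non-autonomous maximal $L^q$-regularity machinery of \cite[Theorem 4.2]{Ro} \emph{once} on the whole interval $[0,T]$; the freezing-of-coefficients is performed inside that construction, and the uniform H\"older modulus from Theorem \ref{Holder} guarantees that the shift $c$ and the operator norm of $A(\cdot)(\partial_t+A(\cdot)+c)^{-1}$ are controlled by a continuous function of $T$. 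One then recovers $w$ from $v$ and $e^{-tA(0)}w_0$, the latter being estimated by standard (autonomous) maximal regularity with initial data; the $T$-dependence of $C(T)$ enters only through the factor $e^{(c+c_0)T}$ and the number of freezing points.

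Your short-interval iteration is a legitimate alternative: it avoids the auxiliary decomposition and makes the exponential growth of $C(T)$ explicit via the geometric sum $\sum_{k}C_4^{kq}$. Its cost is exactly the point you flag in your last paragraph: one must ensure that the constants $\tau,C_1,C_2$, and in particular the trace embedding $W^{1,q}(I;X_0^0)\cap L^q(I;X_1^0)\hookrightarrow C(\overline I;X^0_{1/q,q})$, are stable under translation of $I$ (this is precisely the content of \cite[Corollary 2.3]{CL}, cited in Remark \ref{rem1.2}(c)) and do not deteriorate when you shrink $\tau$ to absorb $c_0w$; since $A(t)\in\cP(\theta)$ uniformly, the semigroup estimate $\|A(a)e^{-\cdot A(a)}w(a)\|_{L^q(a,\infty;X_0^0)}\le K\|w(a)\|_{X^0_{1/q,q}}$ (cf.\ \cite[Proposition III.4.10.3]{Am}, used also in the paper's proof) gives $C_1$ independent of $|I|$, and the Neumann-series bound for $C_2$ only improves as $|I|$ shrinks. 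The paper's single-shot approach sidesteps this bookkeeping at the price of the extra algebra with $v$.
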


\begin{proof}
Write $A(t)=c_{0}-mw^{\frac{m-1}{m}}(t)\underline{\Delta}$ with $c_{0}>0$ as in Proposition \ref{Rbound} and let $\{e^{-tA(0)}\}_{t\geq0}$, be the bounded holomorphic semigroup generated by $A(0)$. According to \cite[Proposition III.4.10.3]{Am} there exist $K,\eta>0$, independent of $T$ such that 
\begin{gather}\label{ppl}
\|A(0)e^{-tA(0)}w_0\|_{L^{q}(0,T;X_{0}^{0})}\leq K\|w_0\|_{X_{\frac{1}{q},q}^{0}}.
\end{gather}

For $c\geq0$ consider the linear non-autonomous parabolic problem
\begin{eqnarray}\label{u12}
v'(t)+(c+A(t)) v(t)&=&e^{-ct}(A(0)-A(t))A(0)^{-1}A(0)e^{-tA(0)}w_{0},\quad t\in(0,T),\\\label{u22}
v(0)&=&0,
\end{eqnarray} 
Since $w$ solves \eqref{v1}, the above problem has the solution
\begin{gather}
e^{-ct}(e^{-c_{0}t}w-e^{-tA(0)}w_{0})\in W^{1,q}(0,T;X_{0}^{0})\cap L^{q}(0,T;X_{0}^{1})\hookrightarrow C([0,T]; \mathcal{H}_{p}^{2-\frac{2}{q}-\varepsilon,\gamma+2-\frac{2}{q}-\varepsilon}(\mathbb{B})\oplus\mathcal{E}_0),
\end{gather}
for each $\gve>0$.

Moreover, when $\gve>0$ is sufficiently small, by Lemma \ref{abr} we have that
\begin{gather}\label{w5}
w,w^{\frac{m-1}{m}}\in C([0,T];\mathcal{H}_{p}^{2-\frac{2}{q}-\varepsilon,\gamma+2-\frac{2}{q}-\varepsilon}(\mathbb{B})\oplus\mathcal{E}_0)\hookrightarrow C([0,T];C(\mathbb{B}))
\end{gather}
and multiplication with $mw^{\frac{m-1}{m}}$ induces a bounded map in $X_{0}^{0}$, whose norm is bounded by 
$$
\|mw^{\frac{m-1}{m}}\|_{C([0,T];C(\B))}.
$$
We conclude from \eqref{w5} that 
$$
c_{0}-mw^{\frac{m-1}{m}}\underline{\Delta}_{0}\in C([0,T];\mathcal{L}(X_{1}^{0},X_{0}^{0})). 
$$
Thus, by \cite[Theorem 2.7]{Ar} and Proposition \ref{Rbound}, there exists a unique solution $v\in W^{1,q}(0,T;X_{0}^{0})\cap L^{q}(0,T;X_{0}^{1})$ and therefore
\begin{gather}\label{v}
v=e^{-ct}(e^{-c_{0}t}w-e^{-tA(0)}w_{0}).
\end{gather} 

In order to obtain maximal $L^q$-regularity space estimates, we regard $v$ as the action of the inverse to the operator $\partial_{t}+A(\cdot)+c$ on the right hand side of \eqref{u12}. This inverse has been constructed in the proof of \cite[Theorem 4.2]{Ro} for $c>0$ sufficiently large. For convenience we describe the construction, which relies on a freezing-of-coefficients argument; details can be found in \cite{Ro}: Denote by $B$ the operator $u\mapsto \partial_{t}u$ in $Y_{0}=L^{q}(0,T;X_{0}^{0})$ with domain
$$
\mathcal{D}(B)=Y_{2}=\{u\in W^{1,q}(0,T;X_{0}^{0})\, |\, u(0)=0\}.
$$
Since the Banach space $X_{0}^{0}$ satisfies the UMD property, by \cite[Theorem 8.5.8]{Ha}, for any $\phi\in(0,\frac{\pi}{2})$ the operator $B$ admits a bounded $H^\infty$-calculus of angle $\phi$ (concerning this property, see e.g. \cite[Definition 2.9]{DHP}). Furthermore, by \cite[Corollary 8.5.3 (a)]{Ha} the $H^\infty$-bound of $B$ is uniformly bounded in $T$. We regard $A(\cdot)$ as an operator in $L^{q}(0,T;X_{0}^{0})$ with domain $Y_{1}=L^{q}(0,T;X_{1}^{0})$. 

For small $r\in (0,1)$ choose $0<t_1<\ldots <t_N$ such that the intervals $(t_j-r,t_j+r)$, $j=1,\ldots ,N$, cover $[0,T]$. 
Choose a subordinate partition of unity $\phi_1,\ldots,\phi_N$ and smooth functions 
$\chi_j$, $j=1\ldots,N$, supported in $(t_j-r,t_j+r)$ 
such that $\chi_j\phi_j=\phi_j$. 

By the Kalton-Weis theorem \cite[Theorem 4.5]{KW1}, each $A(t_{j})+B$ belongs to $\mathcal{P}(0)$. Define 
$$
A_{j}=\chi_{j}(A(\cdot)+B)+(1-\chi_{j})(A(t_{j})+B)=A(t_{j})+B+\chi_{j}(A(\cdot)-A(t_{j})),
$$
with $\mathcal{D}(A_{j})=Y_{1}\cap Y_{2}$ in $Y_{0}$ for each $j$. 
Taking $r$ sufficiently small (and possibly $N$ large), we can achieve that 
$\|\chi_{j}(A(\cdot)-A(t_{j}))(A(t_{j})+B+c)^{-1}\|_{\mathcal{L}(Y_{0})}\leq 1/2$;
this is a consequence of the continuity of $A(\cdot)$ and the uniform boundedness of the norm of $A(\xi)(A(\xi)+B+c)^{-1}$ in $\mathcal{L}(Y_{0})$ for $(\xi,c)\in[0,T]\times \R_{+}$. By standard sectoriality perturbation theory $A_{j}$ belongs again to $\mathcal{P}(0)$; in fact
\begin{gather*}
(A_{j}+c)^{-1}=(A(t_{j})+B+c)^{-1}\sum_{k=0}^{\infty}(-1)^{k}\big(\chi_{j}(A(\cdot)-A(t_{j}))(A(t_{j})+B+c)^{-1}\big)^{k}.
\end{gather*}
We can then construct a left and right inverse to $A(\cdot)+B+c$ for $c\geq0$ sufficiently large as in \cite[(4.5)]{Ro} and \cite[(4.6)]{Ro}, respectively. Due to the H\"older estimate in Theorem \ref{Holder}, we can choose $c$ uniformly bounded, whenever $T$ is bounded. Furthermore, the $\mathcal{L}(L^{q}(0,T;X_{0}^{0}))$-norm of $A(\cdot)(B+A(\cdot)+c)^{-1}$ can be estimated by $C_{0}(T)$, for some positive function $C_{0}(\cdot)\in C(\mathbb{R})$ (see \cite[(4.5)]{Ro}). Therefore, by \eqref{ppl} and \eqref{u12}-\eqref{u22} we estimate 
\begin{eqnarray}\nonumber
\lefteqn{\|v\|_{W^{1,q}(0,T;X_{0}^{0})\cap L^{q}(0,T;X_{1}^{0})}}\\\label{maxreg3}
&\leq& C_{1}(T)\|e^{-ct}(A(0)-A(\cdot))A(0)^{-1}A(0)e^{-tA(0)}w_{0}\|_{L^{q}(0,T;X_{0}^{0})}\leq C_{2}(T)\|w_{0}\|_{X_{\frac{1}{q},q}^{0}},
\end{eqnarray}
for some positive functions $C_{1}(\cdot),C_{2}(\cdot)\in C(\mathbb{R})$. 

Standard maximal $L^q$-regularity for the problem
\begin{eqnarray*}\label{eta12}
f'(t)+A(0) f(t)&=&0,\quad t\in(0,T),\\\label{eta22}
f(0)&=&w_{0},
\end{eqnarray*} 
implies that 
\begin{gather}\label{yy}
\|e^{-tA(0)}w_{0}\|_{W^{1,q}(0,T;X_{0}^{0})\cap L^{q}(0,T;X_{1}^{0})}\leq C_{3}\|w_{0}\|_{X_{\frac{1}{q},q}^{0}},
\end{gather}
for some $C_3>0$ (see e.g. \cite[(2.9)]{Ro2}). From \eqref{v}, \eqref{maxreg3} and \eqref{yy} we obtain 
\begin{eqnarray*}\
\lefteqn{\|w\|_{W^{1,q}(0,T;X_{0}^{0})\cap L^{q}(0,T;X_{1}^{0})}}\\
&\leq& (c+c_{0}+1)e^{(c+c_{0})T}\Big(\|v\|_{W^{1,q}(0,T;X_{0}^{0})\cap L^{q}(0,T;X_{1}^{0})}+\|e^{-tA(0)}w_{0}\|_{W^{1,q}(0,T;X_{0}^{0})\cap L^{q}(0,T;X_{1}^{0})}\Big)\\
&\leq& (c+c_{0}+1)e^{(c+c_{0})T}(C_{2}(T)+C_{3})\|w_{0}\|_{X_{\frac{1}{q},q}^{0}}.
\end{eqnarray*}
\end{proof}

\begin{corollary}\label{rmrk}
Estimate \eqref{contint} implies that for suitable $0<\delta_{0}<\delta_{1}<\infty$ in Theorem \ref{xq}, we also have $\|w(t)\|_{X_{\frac{1}{q},q}^{0}}\leq c\, {C}(T)$ with some constant $c$ independent of $T\in[\delta_{0},\delta_{1}]$.
\end{corollary}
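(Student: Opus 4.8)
The plan is to read off the pointwise interpolation-space bound directly from the $L^q$-maximal-regularity estimate of Theorem~\ref{xq} by invoking the continuous embedding \eqref{contint}, the only subtlety being that the embedding constant has to be chosen independently of $T$ once the endpoints $\delta_0$ and $\delta_1$ are fixed.

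Fix $0<\delta_0<\delta_1<\infty$, and let $T\in[\delta_0,\delta_1]$ be such that the short time solution $w\in W^{1,q}(0,T;X_0^0)\cap L^{q}(0,T;X_1^0)$ of \eqref{v1}-\eqref{v2} with initial value $w_0$ exists, so that Theorem~\ref{xq} applies. I would apply \eqref{contint} with $s_0=0$ and the parameters $T_0=0$, $T_1=\delta_0$, $T_2=T$, $T_3=\delta_1$; the required ordering $0\le T_0<T_1\le T_2\le T_3<\infty$ holds precisely because $\delta_0\le T\le\delta_1$. Since $T_1$ and $T_3$ are now fixed (depending only on $\delta_0,\delta_1$), the remark accompanying \eqref{contint} — which rests on \cite[Corollary~2.3]{CL} — guarantees that the norm of the embedding
\[
W^{1,q}(0,T;X_0^0)\cap L^{q}(0,T;X_1^0)\hookrightarrow C([0,T];X^0_{\frac{1}{q},q})
\]
is bounded by a constant $c$ that depends only on $\delta_0,\delta_1$ (and the fixed data $p,q,\gamma$), but not on $T\in[\delta_0,\delta_1]$. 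Combining this with Theorem~\ref{xq} yields, for every $t\in[0,T]$,
\[
\|w(t)\|_{X^0_{\frac{1}{q},q}}\le c\,\|w\|_{W^{1,q}(0,T;X_0^0)\cap L^{q}(0,T;X_1^0)}\le c\,C(T),
\]
which is the asserted estimate.

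There is no genuine difficulty here beyond bookkeeping: the one point that must be checked is that the abstract embedding \eqref{contint} can be applied with a $T$-independent constant, and this is exactly the uniformity statement of \cite[Corollary~2.3]{CL} already used in Remark~\ref{rem1.2}(c). I would only stress that $c$, and hence the whole statement, genuinely depends on the choice of the compact time window $[\delta_0,\delta_1]$; it is only within such a fixed window that the pointwise $X^0_{\frac1q,q}$-norm of $w$ is controlled by $C(T)$ uniformly in $T$.
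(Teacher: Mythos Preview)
Your proof is correct and follows exactly the route the paper intends: the corollary is stated as an immediate consequence of the embedding \eqref{contint} combined with the bound of Theorem~\ref{xq}, and you have spelled out precisely how the choice $T_0=0$, $T_1=\delta_0$, $T_2=T$, $T_3=\delta_1$ makes the embedding constant independent of $T\in[\delta_0,\delta_1]$ via \cite[Corollary~2.3]{CL}.
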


\section{Proof of Theorem 1.1}
We are now in the position to show existence and uniqueness of the long time solution for \eqref{e1}-\eqref{e2} as well as smoothness in space and time. The proof of long time existence is based on a successive application of a short time Banach fixed point argument by H. Amann. Due to the necessity to control the interpolation space norm (Corollary \ref{rmrk} above), we do this step for the case of $s=0$, i.e. on the weighted $L^p$-space. Then, smoothness will follow by an application of Theorem \ref{SE}.

Let us first note that also for $s_0<0$ we may reduce to the case of $s=0$:
By Theorem \ref{t22} there exists some $T_{0}>0$ and a unique 
$$
\widetilde{w}\in W^{1,q}(0,T_{0};X_{0}^{s_{0}})\cap L^{q}(0,T_{0};X_{1}^{s_{0}})$$
solving the problem \eqref{v1}-\eqref{v2} with initial data $\widetilde{w}(0)=u_{0}^{m}$. By Theorem \ref{comp2} we have $c_{0}^{m}\leq \widetilde{w}\leq c_{1}^{m}$ on $[0,T_{0}]\times\B$. Take $\xi\in(0,T_{0})$ arbitrarily close to zero such that $
\widetilde{w}(\xi)\in X_{1}^{s_{0}}$; in particular, $\widetilde w\in X_{\frac{1}{q},q}^{s_{0}+\frac{2}{q}-\varepsilon}$
for all $\varepsilon>0$ by Remark \ref{rem1.2}(b). By Theorem \ref{t22} we can start the evolution \eqref{v1}-\eqref{v2} again with initial value $\widetilde{w}(\xi)$ and $s_{0}$ replaced by $s_{0}+\frac{2}{q}-\varepsilon$. After finitely many steps, by uniqueness, see Theorem \ref{t22}, we can achieve $\widetilde{w}(t_{0})\in X_{1}^{0}$ for some $t_{0}\in(0,T_{0})$ arbitrarily close to zero. By Theorem \ref{t22}, the problem \eqref{v1} with initial data $w(t_{0})=\widetilde{w}(t_{0})\in X_{1}^{0}$ has a unique solution 
\begin{gather}\label{firstw}
w\in W^{1,q}(t_{0},t_{0}+T;X_{0}^{0})\cap L^{q}(t_{0},t_{0}+T;X_{1}^{0})\hookrightarrow C([t_{0},t_{0}+T];X_{\frac{1}{q},q}^{0})
\end{gather}
for some $T>0$. By uniqueness of Theorem \ref{t22}, we have that $\widetilde{w}=w$ on $[t_{0},\min\{T_{0},t_{0}+T\}]$. 

\subsubsection*{Long time existence.} \rm
Let $T_{\max}$ be the supremum of all such $T$ and assume that $T_{\max}<\infty$. By Theorem \ref{comp2}
\begin{gather}\label{compw}
c_{0}^{m}\leq w\leq c_{1}^{m}\quad \text{on} \quad [t_{0},t_{0}+T_{\max})\times\B.
\end{gather}
We apply \cite[Theorem 7.1]{Am4} on maximal continuous regularity solutions to abstract quasilinear parabolic equations to \eqref{v1}-\eqref{v2}.
Concerning the data \cite[(Q1)-(Q2)]{Am4} we choose $E_{0}=X_{0}^{0}$, $E_{1}=X_{1}^{0}$, $E_{\alpha}=X_{\frac{1}{q},q}^{0}$ and $E_{\beta}=X_{\frac{1}{q}+\delta,q}^{0}$ with $\delta$ defined in \eqref{delta1}, such that the following embedding holds
\begin{gather}\label{w35}
 X_{\frac{1}{q},q}^{0}\hookrightarrow X_{\frac{1}{q}+\delta,q}^{0}\hookrightarrow \mathcal{H}_{p}^{2-\frac{2}{q}-2\delta-\varepsilon,\gamma+2-\frac{2}{q}-2\delta-\varepsilon}(\mathbb{B})\oplus\mathcal{E}_0\hookrightarrow C(\mathbb{B}),
\end{gather}
for any $\varepsilon>0$ small enough. Concerning the open set in Amann's condition (Q1), we choose 
$$
V=\bigcup_{t\in[t_{0},t_{0}+T_{\max})}\Big\{v\in X_{\frac{1}{q},q}^{0}\, |\, \|v-w(t)\|_{ X_{\frac{1}{q},q}^{0}}<\frac{c_{0}^{m}\widetilde{\delta}}{C_{p,q,\gamma}}\Big\}.
$$
Here $\widetilde{\delta}\in(0,\frac{1}{2})$ and by $C_{p,q,\gamma}$ we denote the norm of the embedding $X_{\frac{1}{q},q}^{0}\hookrightarrow C(\mathbb{B})$. Then we take $V_{\alpha}=V\cap E_{\alpha}$ and $V_{\beta}=V\cap E_{\beta}$, equipped with the topologies of $E_{\alpha}$ and $E_{\beta}$, respectively. Finally, we let $A(w)=-mw^{\frac{m-1}{m}}\underline{\Delta}_{0}$.

Due to Corollary \ref{rmrk}, $V$ is bounded in $X_{\frac{1}{q},q}^{0}$. Furthermore, by \eqref{compw} and the estimate
$$
\|v-w(t)\|_{C(\mathbb{B})}\leq C_{p,q,\gamma} \|v-w(t)\|_{ X_{\frac{1}{q},q}^{0}}\leq \frac{c_{0}^{m}}{2},
$$ 
valid for any $v\in V$ and certain $t$ depending on $v$, we see that all elements in $V$ are uniformly bounded away from zero. 

{\em Step 1:} We recall from Lemma \ref{abr} that $\mathcal{H}_{p}^{2-\frac{2}{q}-2\delta-\varepsilon,\gamma+2-\frac{2}{q}-2\delta-\varepsilon}(\mathbb{B})\oplus\mathcal{E}_0$ is a Banach algebra for $\varepsilon>0$ sufficiently small and closed under holomorphic functional calculus. Moreover, the elements of this space act as multipliers on $X^0_0$.
With \eqref{w35} and \eqref{poly} it is then easy to see that 
$A(\cdot): V_{\beta}\rightarrow \mathcal{L}(X_{1}^{0},X_{0}^{0})$ is a Lipschitz map whose Lipschitz bound is estimated by the total $X_{\frac{1}{q}+\delta,q}^{0}$-bound of $V$; see \cite[(6.20)]{RS3} for details. 

{\em Step 2 :} For $\sigma>(n+1)/p$ and $\gamma>(n+1)/2$, the space $\cH^{\sigma, \gamma}_p(\B)$ 
embeds into the Zygmund space $C^{\sigma-(n+1)/p}_*(\B)$; this is shown e.g. in
the proof of \cite[Corollary 3.3]{RS3}.
In view of Corollary \ref{rmrk} and \eqref{w35} we see that the elements of $V$ are uniformly Hölder continuous for some small Hölder exponent. Just as in the proof of Proposition \ref{Rbound} we conclude that there exists a constant $c>0$ such that $c-A(v)$ is $R$-sectorial of angle $\theta$, for some fixed $\theta\in(\frac{\pi}{2},\pi)$, uniformly in $v\in\{w(t)\, |\, t\in[t_{0},t_{0}+T_{\max})\}$. Then, by taking $\widetilde{\delta}$ sufficiently small and using the perturbation result \cite[Theorem 1]{KuWe}, the same holds true uniformly in $v\in V$. As a consequence, $c-A(v)\in \mathcal{P}(\theta)$, uniformly in $v\in V$, and hence $A(v)\in \mathcal{H}(E_{1},E_{0})$ uniformly in $v\in V$, where $\mathcal{H}$ denotes generation of an analytic semigroup (see \cite[Section 4]{Am4}). 
Therefore, the above uniformity ensures that the map $v\mapsto A(v)$ from $V_{\alpha}$ to $\mathcal{H}(E_{1},E_{0})$ is locally regularly bounded in the sense of \cite[(Q2)]{Am4}. 

We conclude that both \cite[(Q1)-(Q2)]{Am4} are fulfilled and hence by \cite[Theorem 7.1]{Am4} with initial data $w(\tau_{0})\in X_{\frac{1}{q},q}^{0}$, for any 
$\tau_{0}\in[t_{0},t_{0}+T_{\max})$, there exists a unique 
\begin{eqnarray*}\label{wt}
\lefteqn{w_{\tau_{0}}\in C^{1}((\tau_{0},\tau_{0}+T_{V}];X_{0}^{0})}\\
&&\cap C((\tau_{0},\tau_{0}+T_{V}];X_{1}^{0})\cap C([\tau_{0},\tau_{0}+T_{V}];X_{\frac{1}{q},q}^{0})\cap C^{\delta}([\tau_{0},\tau_{0}+T_{V}];X_{\frac{1}{q}+\delta,q}^{0})
\end{eqnarray*}
solving \eqref{v1}-\eqref{v2} with initial data $w(\tau_{0})$, where $T_{V}$ is independent of $\tau_{0}$. Theorem \ref{comp2} implies that each $w_{\tau_{0}}$ coincides with $w$ on $[\tau_{0},\min\{\tau_{0}+T_{V},t_{0}+T_{\max}\})$. 
Taking $\tau_{0}$ close to $T_{\max}$ we obtain a contradiction. Regarding $\widetilde{w}$ as an extension of $w$ to $[0,t_{0})$, we obtain for the unique solution $w$ of \eqref{v1}-\eqref{v2} the regularity 
\begin{eqnarray}\label{s0w}
w\in C^{1}((0,T];X_{0}^{0})\cap C((0,T];X_{1}^{0})\cap W^{1,q}(0,T;X_{0}^{s_{0}}) \cap L^{q}(0,T;X_{1}^{s_{0}}),
\end{eqnarray}
valid for arbitrary $T>0$.

\subsubsection*{Smoothness in space.} We show that the solution $w$ given by \eqref{s0w} becomes instantaneously smooth in the Mellin-Sobolev spaces. 
We apply Theorem \ref{SE} to the problem \eqref{v1}-\eqref{v2} on $[\tau,T]$, $\tau\in(0,T)$, 
for some fixed $T$, with initial data $w(\tau)$ and the following setting: 
$Y_{0}^{i}=\mathcal{H}_{p}^{\frac{i}{q},\gamma}(\mathbb{B})$, $Y_{1}^{i}=\mathcal{H}_{p}^{\frac{i}{q}+2,\gamma+2}(\mathbb{B})\oplus\mathcal{E}_0$ and $A(v)u=-mv^{\frac{m-1}{m}}\underline{\Delta}_{\frac{i}{q}}u$. Furthermore, we choose the set
$$
Z=\left\{u\in (Y_{1}^{0},Y_{0}^{0})_{\frac{1}{q},q}\,\, | \,\, c_{0}^{m}\leq u(z)\leq c_{1}^{m} \quad \text{for all} \quad z\in\mathbb{B}\right\}.
$$
By Remark \ref{rem1.2}(b), 
$Y_{1}^{i}\hookrightarrow(Y_{1}^{i+1},Y_{0}^{i+1})_{\frac{1}{q},q}$. 
By \eqref{s0w} there exists a unique 
\begin{gather}\label{uniquew}
w\in W^{1,q}(\tau,T;Y_{0}^{0})\cap L^{q}(\tau,T;Y_{1}^{0}) \hookrightarrow C([\tau,T];(Y_{1}^{0},Y_{0}^{0})_{\frac{1}{q},q})
\end{gather}
solving \eqref{v1}-\eqref{v2}. 
By \eqref{compw} we have that $w(t)\in Z$ for each $t\in[\tau,T]$. 
According to Proposition \ref{Rbound}
$-mw^{\frac{m-1}{m}}(t)\underline{\Delta}_{0}$ has maximal $L^q$-regularity for each $t\in[0,T]$. 
Finally, from Lemma \ref{abr} 
we deduce that $-mw^{\frac{m-1}{m}}(t)\underline{\Delta}_{0}\in C([\tau,T];\mathcal{L}(Y_{1}^{0},Y_{0}^{0}))$, so that the condition (i) of Theorem \ref{SE} is fulfilled.

If $i\geq1$ and $v\in (Y_{1}^{i},Y_{0}^{i})_{\frac{1}{q},q}$, then
$$
v\in(Y_{1}^{i},Y_{0}^{i})_{\frac{1}{q},q}\hookrightarrow \mathcal{H}_{p}^{\frac{i-2}{q}+2-\varepsilon,\gamma+2-\frac{2}{q}-\varepsilon}(\mathbb{B})\oplus\mathcal{E}_0
$$
for all $\varepsilon>0$ according to Remark \ref{rem1.2}(b). By Lemma \ref{abr}, we also have
$$
v^{\frac{m-1}{m}}\in \mathcal{H}_{p}^{\frac{i-2}{q}+2-\varepsilon,\gamma+2-\frac{2}{q}-\varepsilon}(\mathbb{B})\oplus\mathcal{E}_0,
$$
and multiplication by $v^{\frac{m-1}{m}}$ defines a bounded map in $Y_{0}^{i+1}$. Therefore, $A(v)$ maps $Y_{1}^{i+1}$ to $Y_{0}^{i+1}$. Moreover, part (b) of Remark \ref{REXT} with $s=\frac{i+1}{q}$ implies maximal $L^q$-regularity for each $A(v)$.

Next, take
\begin{eqnarray*}
v\in\left\{u\in C([\tau,T];(Y_{1}^{i},Y_{0}^{i})_{\frac{1}{q},q}) \,\, |\,\, u(t)\in Z \quad \text{for all}\quad t\in[\tau,T]\right\}.
\end{eqnarray*}
In view of \eqref{int} and Lemma \ref{abr}, it holds that
\begin{eqnarray*}
v^{\frac{m-1}{m}}\in
\Big\{u\in \bigcap_{\varepsilon>0}C([\tau,T];\mathcal{H}_{p}^{\frac{i-2}{q}+2-\varepsilon,\gamma+2-\frac{2}{q}-\varepsilon}(\mathbb{B})\oplus\mathcal{E}_0) \,\, |\,\, 
u(t)\in [c_{0}^{m-1},c_{1}^{m-1}] \quad \text{for all}\quad t\in[\tau,T]\Big\}.
\end{eqnarray*}
Hence
$$
\|v^{\frac{m-1}{m}}(t)\underline{\Delta}_{\frac{i+1}{q}}-v^{\frac{m-1}{m}}(t')\underline{\Delta}_{\frac{i+1}{q}}\|_{\mathcal{L}(Y_{1}^{i+1},Y_{0}^{i+1})}\leq C\|v^{\frac{m-1}{m}}(t)-v^{\frac{m-1}{m}}(t')\|_{\mathcal{L}(Y_{0}^{i+1})} 
$$
for certain $C>0$, which by Lemma \ref{abr} implies that $A(v(\cdot))\in C([\tau,T];\mathcal{L}(Y_{1}^{i+1},Y_{0}^{i+1}))$, and condition (ii) of the theorem is also satisfied. 

Therefore, Theorem \ref{SE} implies that 
\begin{gather}\label{regw}
w\in \bigcap_{s>0} W^{1,q}(\tau,T;X_{0}^{s}) \cap L^{q}(\tau,T;X_{1}^{s})\hookrightarrow \bigcap_{s>0} C([\tau,T];X_{\frac{1}{q},q}^{s}).
\end{gather}

Next, we apply again \cite[Theorem 7.1]{Am4} with 
$E_{0}=X_{0}^{s_{\nu}}$, $E_{1}=X_{1}^{s_{\nu}}$, 
$E_{\alpha}=X_{\frac{1}{q},q}^{s_{\nu}}$, 
$E_{\beta}=X_{\frac{1}{q}+\delta,q}^{s_{\nu}}$, 
$V_{\alpha}=V\cap E_{\alpha}$ and 
$V_{\beta}=V\cap E_{\beta}$, where now $V$ is an open ball in 
$X_{\frac{1}{q},q}^{s_{\nu}}$ with center $w(\nu)\in X_{\frac{1}{q},q}^{s_{\nu}}$ 
for some $\nu\in[0,T)$. Here we take, with $s_0$ defined in \eqref{pq1},
$$
s_{\nu}=\Big\{\begin{array}{lll} s_{0} & \text{if} & \nu=0\\
\text{arbitrary in $(0,\infty)$} & \text{if} & \nu>0.
\end{array}
$$

By Step 1 and Step 2 above both assumptions \cite[(Q1)-(Q2)]{Am4} are fulfilled, and hence \cite[Theorem 7.1]{Am4} with initial data $w(\nu)$ implies the existence of some $\widetilde{T}_{\nu}>0$ and a unique 
$$
\eta_{\nu}\in C^{1}((\nu,\nu+\widetilde{T}_{\nu}];X_{0}^{s_\nu})
\cap C((\nu,\nu+\widetilde{T}_{\nu}];X_{1}^{s_\nu})
\cap C([\nu,\nu+\widetilde{T}_{\nu}];X_{\frac{1}{q},q}^{s_\nu})
\cap C^{\delta}([\nu,\nu+\widetilde{T}_{\nu}];X_{\frac{1}{q}+\delta,q}^{s_\nu})
$$ 
solving \eqref{v1}-\eqref{v2} on $[\nu,\nu+\widetilde{T}_{\nu}]\times\B$. By Theorem \ref{comp2} and \eqref{s0w} $\eta_{0}$ coincides with $w$ on $[0,\min\{\widetilde{T}_{0},T\}]\times\B$. Similarly, by Theorem \ref{comp2} together with \eqref{regw} we deduce that $\eta_{\nu}$ coincides with $w$ on $[\nu,\min\{\nu+\widetilde{T}_{\nu},T\}]\times\B$ for all $\nu\in(0,T)$. Therefore, we obtain the following regularity

\begin{eqnarray}\nonumber
\lefteqn{w\in C^{1}((0,T];\mathcal{H}_{p}^{s,\gamma}(\mathbb{B}))\cap 
C((0,T];\mathcal{H}_{p}^{s+2,\gamma+2}(\mathbb{B})\oplus\mathcal{E}_0)}\\\label{furtreg}
&&
\cap\, C^{\delta}((0,T];\mathcal{H}_{p}^{s,\gamma+2-\frac{2}{q}-2\delta-\varepsilon}(\mathbb{B})\oplus\mathcal{E}_0)\cap C^{\delta}([0,T];\mathcal{H}_{p}^{s_{0}+2-\frac{2}{q}-2\delta-\varepsilon,\gamma+2-\frac{2}{q}-2\delta-\varepsilon}(\mathbb{B})\oplus\mathcal{E}_0).
\end{eqnarray}
valid for all $s>0$ and all $\varepsilon>0$.

By \eqref{compw}, Remark \ref{equivalence}, the Banach algebra property of $\mathcal{H}_{p}^{s+2-\frac{2}{q}-2\delta-\varepsilon,\gamma+2-\frac{2}{q}-2\delta-\varepsilon}(\mathbb{B})\oplus\mathcal{E}_0$, for $s\geq s_{0}$ and $\varepsilon>0$ small enough and Lemma \ref{abr} the same regularity as in \eqref{s0w} and \eqref{furtreg} holds for $u=w^{\frac{1}{m}}$, which solves the original problem \eqref{e1}-\eqref{e2}; here concerning the $C^{1}((0,T];\mathcal{H}_{p}^{s,\gamma}(\mathbb{B}))$ regularity, we note that $\partial_{t}w^{\frac{1}{m}}=\frac{1}{m}w^{\frac{1-m}{m}}w'$ and use the fact that by Lemma \ref{abr} we have 
$$
w^{\frac{1-m}{m}}\in C((0,T];\mathcal{H}_{p}^{s+2,\gamma+2}(\mathbb{B})\oplus\mathcal{E}_0),
$$
and hence it acts by multiplication as a bounded map on $C((0,T];\mathcal{H}_{p}^{s,\gamma}(\mathbb{B}))$.

Moreover, 
\begin{gather}
u^{m}=w\in C^{\delta}([\nu,T];\mathcal{H}_{p}^{s_{\nu}+2-\frac{2}{q}-2\delta-\varepsilon,\gamma+2-\frac{2}{q}-2\delta-\varepsilon}(\mathbb{B})\oplus\mathcal{E}_0)
\end{gather}
for all $\varepsilon>0$, and hence 
$$
\partial_{t}u=\underline{\Delta}_{s_{\nu}}u^{m}\in C^{\delta}([\nu,T];\mathcal{H}_{p}^{s_{\nu}-\frac{2}{q}-2\delta-\varepsilon,\gamma-\frac{2}{q}-2\delta-\varepsilon}(\mathbb{B})),
$$
so that 
$$
u\in C^{1+\delta}([\nu,T];\mathcal{H}_{p}^{s_{\nu}-\frac{2}{q}-2\delta-\varepsilon,\gamma-\frac{2}{q}-2\delta-\varepsilon}(\mathbb{B})).
$$
We conclude that in addition to the regularity of \eqref{s0w} and \eqref{furtreg} for the solution $u$ we also have that
\begin{eqnarray}\nonumber
u\in C^{1+\delta}((0,T];\mathcal{H}_{p}^{s,\gamma-\frac{2}{q}-2\delta-\varepsilon}(\mathbb{B}))\cap C^{1+\delta}([0,T];\mathcal{H}_{p}^{s_{0}-\frac{2}{q}-2\delta-\varepsilon,\gamma-\frac{2}{q}-2\delta-\varepsilon}(\mathbb{B})),
\end{eqnarray}
for any $s>0$ and any $\varepsilon>0$.

\subsubsection{Smoothness in time.} By differentiating $\Delta u^{m}$ with respect to time we find that
\begin{gather}\label{poo}
\partial_{t}(\Delta u^{m})=m\Delta(u^{m-1}u').
\end{gather}
We know from Lemma \ref{abr}(c) and the space regularity obtained in the previous step that 
both $u^{m-1}$ and $u^{m-2}$ belong to $\bigcap_{s,\varepsilon>0}C((0,T];\mathcal{H}_{p}^{s,\gamma+2-\frac{2}{q}-\varepsilon}(\mathbb{B})\oplus\mathcal{E}_0).
$
From Lemma \ref{abr}(b) we obtain
that both $u^{m-1}u'$ and $u^{m-2}u'$ are elements of $\bigcap_{s>0}C((0,T];\mathcal{H}_{p}^{s,\gamma}(\mathbb{B}))$. 
Hence by \eqref{e1} and \eqref{poo} 
$$
\partial_{t}^{2}u=m\Delta (u^{m-1}u')\in \bigcap_{s>0}C((0,T];\mathcal{H}_{p}^{s,\gamma-2}(\mathbb{B})).
$$
We conclude that
$$
u\in \bigcap_{s>0}C^{2}((0,T];\mathcal{H}_{p}^{s,\gamma-2}(\mathbb{B})).
$$

By differentiating \eqref{poo} we further obtain
\begin{gather}\label{poo2}
\partial_{t}^{2}(\Delta u^{m})=m\Delta((m-1)u^{m-2}(u')^{2}+u^{m-1}u'').
\end{gather}
We write
$$
u^{m-2}(u')^{2}=u^{m-2}(\chi u')(\chi^{-1} u'),
$$
where $\chi$ is a smooth function on $\mathbb{B}$ such that $\chi>0$ on $\mathbb{B}\backslash([0,\frac{1}{2})\times\partial\mathbb{B})$ and $\chi=x^{2}$ on $[0,\frac{1}{2})\times\partial\mathbb{B}$, and observe that 
$$
u^{m-2}\chi u'\in \bigcap_{s>0}C((0,T];\mathcal{H}_{p}^{s,\gamma+2}(\mathbb{B}))
$$
and
$$
\chi^{-1} u'\in \bigcap_{s>0}C((0,T];\mathcal{H}_{p}^{s,\gamma-2}(\mathbb{B})).
$$
Therefore, both $u^{m-2}(u')^{2}$ and $u^{m-1}u''$ belong to $\bigcap_{s>0}C((0,T];\mathcal{H}_{p}^{s,\gamma-2}(\mathbb{B})),
$
so that \eqref{poo2} implies 
$$
u\in \bigcap_{s>0}C^{3}((0,T];\mathcal{H}_{p}^{s,\gamma-4}(\mathbb{B})).
$$
By applying successively the above argument we obtain that
$$
 u\in C^{k}((0,T];\mathcal{H}_{p}^{s,\gamma-2(k-1)}(\mathbb{B})) \quad \text{for all}\quad k\in\mathbb{N}\backslash\{0\}.
$$
\mbox{\ } \hfill $\square$

\section{Weak Solutions}
Our long time result provides a solution for strictly positive initial data. However, initial values that are possibly vanishing on some part of $\mathbb{B}$ also lead to solutions but in a weaker sense. In this concept, similarly to \cite[Section 3]{AP}, we recall the notion of a weak solution to \eqref{e1}-\eqref{e2}: 

\begin{definition}\label{defweak}
For $T>0$, let $\mathbb{B}_{T}=\mathbb{B}\times (0,T]$. A function $v:\overline{\mathbb{B}}_{T}\rightarrow \mathbb{R}_{+}$ is called a {\em weak solution} of \eqref{e1}-\eqref{e2} if the following are satisfied.
\begin{itemize}
\item[(i)] $\nabla v^{m}$ exists in the sense of distributions in $\mathbb{B}_{T}$ and 
$$
\int_{\mathbb{B}_{T}}(|v|^{2}+\langle\nabla v^m,\nabla v^m\rangle_g)dtd\mu_{g}<\infty,
$$
where $\nabla$, $\langle\cdot,\cdot\rangle_g$ and $d\mu_{g}$, respectively, denote the gradient, the scalar product and the Riemannian measure induced by the metric $g$.
\item[(ii)] For any $\phi\in C^{1}(\mathbb{B}_{T})$ such that $\phi=0$ on 
$\mathbb{B}\times \{T\}$, 
$$
\int_{\mathbb{B}_{T}}(\langle\nabla\phi,\nabla v^{m}\rangle_{g}-(\partial_{t}\phi)v)dtd\mu_{g}=\int_{\mathbb{B}}\phi(z,0)v(z,0)d\mu_{g}.
$$
\end{itemize}
\end{definition}

According to the above definition, we end up with the following result for non-negative valued initial data. 

\begin{theorem}[Existence of a weak solution]
Let $m\ge 1$, $s=0$ and $\gamma$, $p$, $q$ as in \eqref{gamma}, \eqref{pq1}. Furthermore, let
\[
u_{0}\in(\mathcal{H}_{p}^{2,\gamma+2}(\mathbb{B})\oplus\mathcal{E}_0,\mathcal{H}_{p}^{0,\gamma}(\mathbb{B}))_{\frac{1}{q},q}
\]
be non-negative on $\mathbb{B}$. Then, for any $T>0$, the porous medium equation \eqref{e1}-\eqref{e2} possesses a unique weak solution $u$ on $\mathbb{B}_T$.
\end{theorem}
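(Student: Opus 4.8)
\emph{Overall approach and construction.} The plan is to obtain the weak solution as the monotone limit of the strictly positive solutions furnished by Theorem \ref{Th1}, using a uniform energy estimate to pass to the limit in the weak formulation, and to prove uniqueness by the Oleinik--V\'azquez duality argument that already underlies the proof of the comparison principle, Theorem \ref{p1}. First I would approximate from above: set $u_{0,k}=u_0+\tfrac1k$, $k\in\N$. Since $u_0$ belongs to the interpolation space $(\mathcal{H}_{p}^{2,\gamma+2}(\mathbb{B})\oplus\mathcal{E}_0,\mathcal{H}_{p}^{0,\gamma}(\mathbb{B}))_{\frac1q,q}=X^0_{\frac1q,q}$ and the constants lie in $\mathcal E_0\hookrightarrow X^0_{\frac1q,q}$, each $u_{0,k}$ is a strictly positive element of $X^0_{\frac1q,q}$ with $\tfrac1k\le u_{0,k}\le c_1:=\|u_0\|_{C(\mathbb B)}+1$; and with $s_0=0$ the conditions \eqref{gamma} and \eqref{pq1} are exactly the hypotheses of Theorem \ref{Th1}. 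Applying Theorem \ref{Th1}, for each $k$ and each $T>0$ we obtain a solution $u_k$ of \eqref{e1}--\eqref{e2} which is smooth in space and time for $t>0$ and, by part (c), satisfies $\tfrac1k\le u_k\le c_1$ on $[0,T]\times\mathbb B$. Since $u_{0,k+1}<u_{0,k}$, the comparison principle Theorem \ref{p1} gives $u_{k+1}\le u_k$ pointwise, so $(u_k)$ decreases to a limit $u$ with $0\le u\le c_1$.

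\emph{Energy estimate and passage to the limit.} Testing $\partial_t u_k=\Delta(u_k^m)$ against $u_k^m$ and integrating over $\mathbb B\times(0,t)$ — legitimate since $u_k$ is smooth for $t>0$, continuous up to $t=0$, and no contribution arises at the conical tip by the Green identity recorded before the proof of Theorem \ref{p1} — yields
\[
\frac{1}{m+1}\int_{\mathbb B} u_k^{m+1}(t)\,d\mu_g+\int_0^t\!\!\int_{\mathbb B}|\nabla u_k^m|_g^2\,d\mu_g\,d\tau=\frac{1}{m+1}\int_{\mathbb B} u_{0,k}^{m+1}\,d\mu_g\le\frac{c_1^{m+1}\,\mu_g(\mathbb B)}{m+1}.
\]
Hence $(u_k)$ is bounded in $L^\infty(\mathbb B_T)$ and $(\nabla u_k^m)$ in $L^2(\mathbb B_T)$, uniformly in $k$. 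By dominated convergence $u_k\to u$ and $u_k^m\to u^m$ strongly in $L^2(\mathbb B_T)$, and along a subsequence $\nabla u_k^m\rightharpoonup\xi$ weakly in $L^2(\mathbb B_T)$; continuity of distributional differentiation forces $\xi=\nabla u^m$, so $\nabla u^m\in L^2(\mathbb B_T)$, and part (i) of Definition \ref{defweak} holds together with $\int_{\mathbb B_T}|u|^2\le c_1^2\mu_g(\mathbb B)T$. Each $u_k$, being a classical solution whose gradient $\nabla u_k^m$ is $O(x^{-1+\eta})$ near the tip (as in the proof of Theorem \ref{Holder}, so that the boundary integral over $\partial\mathbb B$ vanishes), satisfies the identity in Definition \ref{defweak}(ii) with initial value $u_{0,k}$; letting $k\to\infty$, the gradient term converges by the weak convergence of $\nabla u_k^m$ paired with $\nabla\phi$, the time term by strong $L^2$-convergence of $u_k$, and the right-hand side since $u_{0,k}\to u_0$ uniformly. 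Thus $u$ is a weak solution.

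\emph{Uniqueness.} For two weak solutions $u_1,u_2$ with the same initial value I would set $\alpha=(u_1^m-u_2^m)/(u_1-u_2)$ on $\{u_1\neq u_2\}$ and $\alpha=0$ otherwise; because $m\ge1$ and the solutions are bounded, $0\le\alpha\le m c_1^{m-1}$. Then, exactly as in the proof of Theorem \ref{p1}, one chooses $\omega$ as in Lemma \ref{phi}, lets $\phi\ge0$ solve the backward problem $\partial_t\phi+\alpha_\varepsilon\Delta\phi+\omega=0$, $\phi(T)=0$, with $\alpha_\varepsilon$ a smooth approximation of $\alpha$ satisfying $\varepsilon\le\alpha_\varepsilon\le K$, inserts $\phi$ as a test function in the difference of the two weak formulations, and absorbs the error term $\int_{\mathbb B_T}|u_1-u_2|\,|\alpha-\alpha_\varepsilon|\,|\Delta\phi|$ using the gradient estimate for $\phi$ obtained by multiplying its equation by $\zeta\Delta\phi$ and integrating. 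Letting $\alpha_\varepsilon\to\alpha$ gives $\int_{\mathbb B_T}(u_1-u_2)\,\omega\,d\mu_g\,dt\le0$ for every admissible $\omega\ge0$, whence $u_1\le u_2$; by symmetry $u_1=u_2$.

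\textbf{Main obstacle.} I expect the delicate point to be the uniqueness argument at the low regularity of Definition \ref{defweak} (only $u_i\in L^2$ and $\nabla u_i^m\in L^2$): one must first see that a weak solution is bounded — so that $\alpha$ is bounded — and then control $\phi$ and $\nabla\phi$ near the conical tip so that the integrations by parts in the duality step produce no boundary contributions and all pairings remain finite. Both issues are addressed by the same analysis of the conically degenerate geometry developed in Section 4, in particular the Green formula and the tip decay estimates used there; for the existence part the corresponding care is needed only to justify the energy identity for the smooth approximants $u_k$, which is routine given those same facts.
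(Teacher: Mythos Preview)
Your existence argument is correct and follows the same overall strategy as the paper: approximate $u_0$ from above by strictly positive data, invoke Theorem~\ref{Th1} and the comparison principle to obtain a monotone sequence of classical solutions, derive a uniform $L^2$ bound on $\nabla u_k^m$, and pass to the limit in the weak formulation. The one genuine difference is the energy estimate. The paper works with the transformed variable $w_k=u_k^m$, multiplies the equation $w_k'-mw_k^{(m-1)/m}\Delta w_k=0$ by $m^{-1}w_k^{(1-m)/m}w_k'$ and obtains the pointwise-in-time bound $\int_{\mathbb B}|\nabla w_k(t)|_g^2\le\int_{\mathbb B}|\nabla w_{0,k}|_g^2$; it then has to verify that the right-hand side is uniformly bounded, and this is where the hypothesis $m\ge1$ and the interpolation-space regularity of $u_0$ are actually used (since $\nabla w_{0,k}=m(u_0+\delta_k)^{m-1}\nabla u_0$). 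Your estimate---testing with $u_k^m$ to get $\tfrac{1}{m+1}\int u_k^{m+1}(t)+\int_0^t\!\int|\nabla u_k^m|_g^2=\tfrac{1}{m+1}\int u_{0,k}^{m+1}$---is more elementary: it uses only the $L^\infty$ bound on $u_{0,k}$ and does not need $m\ge1$ for the existence part. Both routes yield exactly the object needed, namely a uniform $L^2(\mathbb B_T)$ bound on $\nabla u_k^m$.

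On uniqueness: the paper's proof in fact does \emph{not} address uniqueness at all---it establishes only conditions (i) and (ii) of Definition~\ref{defweak} for the limit $u$. Your Oleinik-type duality sketch goes beyond what the paper proves. The obstacle you flag is real: an arbitrary weak solution in the sense of Definition~\ref{defweak} is a priori only in $L^2$ with $\nabla v^m\in L^2$, so boundedness of $\alpha$ (and hence the whole duality machinery) is not immediate. If you want to complete the argument you would either need an a priori $L^\infty$ bound for weak solutions on conic manifolds, or to restrict the uniqueness statement to the class of bounded weak solutions, which is the customary formulation in the classical theory.
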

\begin{proof}
We adapt the argument of Aronson and Peletier \cite[Appendix]{AP}. We choose a sequence $\delta_k\rightarrow 0$, $\delta_k>0$, and define $w_{0,k}=(u_0+\delta_k)^m$. For each $T>0$, Theorem \ref{Th1} and Remark \ref{equivalence} guarantee the existence of a solution $w_k$ satisfying 
\begin{eqnarray}\label{v13}
w_{k}'(t)-mw_{k}^{\frac{m-1}{m}}(t)\Delta w_{k}(t)&=&0, \quad t\in(0,T),\\\label{v23}
w_{k}(0)&=&w_{0,k}.
\end{eqnarray}
Moreover, there exists some $M>0$ such that $\delta^m_{k}\leq w_{k}\leq M$ and $w_{k+1}\leq w_{k}$ on $\mathbb{B}_T$ for all $k\in\mathbb{N}$. We can therefore let 
$w(x,t) = \lim w_k(x,t)$ and $u(x,t) = w^{\frac{1}{m}}(x,t)$. By multiplying \eqref{v13} by $m^{-1}w_{k}^{\frac{1-m}{m}}w'_{k}$ and integrating over $\mathbb{B}_T$ we obtain
\begin{eqnarray*}
\lefteqn{0\leq\frac{1}{m}\int_{\mathbb{B}_T}(w'_{k})^{2}w_{k}^{\frac{1-m}{m}}d\mu_{g}dt=\int_{\mathbb{B}_T}w'_{k}\Delta w_{k}d\mu_{g}dt}\\
&=&-\int_{\mathbb{B}_T}\skp{\nabla w'_{k}, \nabla w_{k}}_g d\mu_{g}dt=
\frac12\int_{\mathbb{B}}\skp{\nabla w_{0,k}, \nabla w_{0,k}}_g d\mu_{g}-
\frac12\int_{\mathbb{B}}\skp{\nabla w_{k}(T), \nabla w_{k}(T)}_g d\mu_{g}.
\end{eqnarray*}
Therefore
\begin{eqnarray*}
\int_{\mathbb{B}}\skp{\nabla w_{k}(t), \nabla w_{k}(t)}_g d\mu_{g}\leq\int_{\mathbb{B}}\skp{\nabla w_{0,k}, \nabla w_{0,k}}_g d\mu_{g} \quad \text{for all} \quad t\in[0,T].
\end{eqnarray*}
Let us check that the right hand side is bounded uniformly in $k$: 
In local coordinates close to the boundary
\begin{eqnarray*}
\partial_x w_{0,k} &=& m (u_0+\delta_k)^{m-1}\partial_x u_0\\
\partial_{y_j} w_{0,k} &=& m (u_0+\delta_k)^{m-1}\partial_{y_j} u_0.
\end{eqnarray*}
Hence 
\begin{eqnarray*}
\skp{\nabla w_{0,k},\nabla w_{0,k}}_g = m^2(u_0+\delta_k)^{2m-2}\skp{\nabla u_0,\nabla u_0}_g.
\end{eqnarray*}
Moreover
\begin{eqnarray*}\lefteqn{\int_{\B} \langle \nabla u_0,\nabla u_0\rangle_g \, d\mu_g}\\
&=&\int_{\B}( (\partial_x u_0)^2 + x^{-2} \sum_{ij} h^{ij} (\partial_{y_i}u_0)(\partial_{y_j}u_0)\, )d\mu_g\\
&\le& C\|u_0\|^2_{\cH^{1,1}_2(\B)}\le C' \|u_0\|^2_{\cH_p^{2-2/q-\varepsilon,\gamma+2-2/q-\varepsilon}(\B)\oplus \mathcal E_0}
\end{eqnarray*}
for suitable constants $C, C'>0$ and $\varepsilon>0$ small enough.
Thus $\|\nabla w_k(t)\|_{L^2(\mathbb B)}$ is bounded, uniformly in $k$ and $t\in [0,T]$.
The boundedness of $\{\|\nabla w_k(t)\|_{L^2(\mathbb B)}\}_k$ in $L^2(\B_T)$ implies that there exists a subsequence of $\{\nabla w_k\}$, w.l.o.g. $\{\nabla w_k\}$, converging weakly in $L^2(\B_T)$ to a limit $\psi$. The identity 
$$
\int_{\B_T}\nabla w_k \phi\, d\mu_gdt = -\int_{\B_T} w_k\nabla \phi\, d\mu_gdt
$$
valid for $\phi\in C_c^\infty(\B\times (0,T))$, 
together with dominated convergence implies that 
$$
\int_{\B_T}\psi\phi \, d\mu_gdt = -\int_{\B_T} w\nabla \phi\, d\mu_gdt,
$$
so that $\psi=\nabla w$ in the sense of distributions. 
Thus condition (i) in Definition \ref{defweak} is satisfied. Similarly, we obtain condition (ii) of Definition \ref{defweak}.
\end{proof}

\section{Appendix}

We provide here an alternative smoothness result. Let $\{Y_{i}\}_{i\in\mathbb{N}}$ be a sequence of complex Banach spaces such that 
$$
Y_{0}\overset{d}{\hookleftarrow} Y_{1} \overset{d}{\hookleftarrow} Y_{2}\overset{d}{\hookleftarrow} \ldots ,
$$
and $(Y_{i+2},Y_{i})_{\frac{1}{q},q}\overset{d}{\hookleftarrow} Y_{i+1}$, for all $i\in\mathbb{N}$ and some $q\in(1,\infty)$. Consider the abstract quasilinear problem
\eqref{aqpp3}-\eqref{aqpp4} with $u_{0}\in (Y_{2},Y_{0})_{\frac{1}{q},q}$.

\begin{theorem}[Smoothing]
Assume that $u_{0}\in Z$, where $Z$ is a subset of $(Y_{2},Y_{0})_{\frac{1}{q},q}$. Furthermore, assume:
\begin{itemize}
\item[(i)] There exists a $T>0$ and a unique 
$$
u\in W^{1,q}(0,T;Y_{0})\cap L^{q}(0,T;Y_{2})\hookrightarrow C([0,T];(Y_{2},Y_{0})_{\frac{1}{q},q})
$$ 
solving \eqref{aqpp3}-\eqref{aqpp4}, such that $u(t)\in Z$ for all $t\in[0,T]$. Moreover, $A(u(t)):Y_{2}\rightarrow Y_{0}$ has maximal $L^{q}$-regularity for each $t\in[0,T]$ and $A(u(\cdot))\in C([0,T];\mathcal{L}(Y_{2},Y_{0}))$.
\item[(ii)] For each $i\in \mathbb{N}$ and each $v\in Z\cap (Y_{i+2},Y_{i})_{\frac{1}{q},q}$, $A(v):Y_{i+3}\rightarrow Y_{i+1}$ has maximal $L^{q}$-regularity. Furthermore, $A(v(\cdot))\in C([0,T];\mathcal{L}(Y_{i+3},Y_{i+1}))$ for each $v\in C([0,T];Z\cap(Y_{i+2},Y_{i})_{\frac{1}{q},q})$. 
\item[(iii)] For each $i\in \mathbb{N}$, $t\mapsto F(v(t),t)\in L^{q}(0,T;Y_{i+1})$ for all $v\in C([0,T];Z\cap(Y_{i+2},Y_{i})_{\frac{1}{q},q})$. 
\end{itemize}
Then, for every $\varepsilon\in(0,T)$ we have
$$
u\in \bigcap_{i\in\mathbb{N}} W^{1,q}(\varepsilon,T;Y_{i}).
$$ 
\end{theorem}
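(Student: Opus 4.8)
My plan is to observe that, after a relabeling of the Banach scale, the present hypotheses coincide exactly with those of Theorem \ref{SE}, and then simply quote that theorem. Concretely, for each $i\in\mathbb{N}$ I would set $Y_{0}^{i}:=Y_{i}$ and $Y_{1}^{i}:=Y_{i+2}$. Then $Y_{1}^{i}\overset{d}{\hookrightarrow}Y_{0}^{i}$ holds because $Y_{i+2}\overset{d}{\hookrightarrow}Y_{i+1}\overset{d}{\hookrightarrow}Y_{i}$; the remaining structural requirements of Theorem \ref{SE} are immediate, namely $Y_{0}^{i+1}=Y_{i+1}\overset{d}{\hookrightarrow}Y_{i}=Y_{0}^{i}$ and $Y_{1}^{i+1}=Y_{i+3}\overset{d}{\hookrightarrow}Y_{i+2}=Y_{1}^{i}$ by assumption, while $Y_{1}^{i}=Y_{i+2}\overset{d}{\hookrightarrow}(Y_{i+3},Y_{i+1})_{\frac{1}{q},q}=(Y_{1}^{i+1},Y_{0}^{i+1})_{\frac{1}{q},q}$ is precisely the standing embedding $(Y_{j+2},Y_{j})_{\frac{1}{q},q}\overset{d}{\hookleftarrow}Y_{j+1}$ read at $j=i+1$.

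Next I would verify that hypotheses (i), (ii), (iii) above translate verbatim into (i), (ii), (iii) of Theorem \ref{SE}: in (i) one has $(Y_{1}^{0},Y_{0}^{0})_{\frac{1}{q},q}=(Y_{2},Y_{0})_{\frac{1}{q},q}$ and $W^{1,q}(0,T;Y_{0}^{0})\cap L^{q}(0,T;Y_{1}^{0})=W^{1,q}(0,T;Y_{0})\cap L^{q}(0,T;Y_{2})$; in (ii) the source space $(Y_{1}^{i},Y_{0}^{i})_{\frac{1}{q},q}$ is $(Y_{i+2},Y_{i})_{\frac{1}{q},q}$ and $A(v):Y_{1}^{i+1}\to Y_{0}^{i+1}$ is $A(v):Y_{i+3}\to Y_{i+1}$; in (iii) one has $L^{q}(0,T;Y_{0}^{i+1})=L^{q}(0,T;Y_{i+1})$. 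Theorem \ref{SE} then applies and gives, for each $\varepsilon\in(0,T)$,
$$
u\in\bigcap_{i\in\mathbb{N}}W^{1,q}(\varepsilon,T;Y_{0}^{i})\cap L^{q}(\varepsilon,T;Y_{1}^{i})=\bigcap_{i\in\mathbb{N}}\Big(W^{1,q}(\varepsilon,T;Y_{i})\cap L^{q}(\varepsilon,T;Y_{i+2})\Big),
$$
which in particular yields $u\in\bigcap_{i\in\mathbb{N}}W^{1,q}(\varepsilon,T;Y_{i})$, the claim.

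Alternatively one can give a self-contained proof by repeating the freezing-of-coefficients bootstrap of Theorem \ref{SE}: choose an increasing sequence $t_{0}<t_{1}<\cdots$ in $(0,\varepsilon)$ with $u(t_{j})$ lying in the relevant space, view $u$ on $(t_{j},T)$ as the unique solution (by \cite[Theorem 2.7]{Ar}) of the linear non-autonomous problem $w'+A(u(\cdot))w=F(u(\cdot),\cdot)$, $w(t_{j})=u(t_{j})$, solve the analogous problem one level higher, and identify the two solutions after gluing, again using uniqueness from \cite[Theorem 2.7]{Ar}; this gains one unit of regularity per step and one iterates. I do not expect a genuine obstacle; the only delicate points are the index bookkeeping — in particular that at the $(j+1)$-st step the datum $u(t_{j})\in Y_{j+2}$ belongs to the trace space $(Y_{j+3},Y_{j+1})_{\frac{1}{q},q}$ of the next problem, which is exactly the standing embedding — together with the routine check that membership in $Z$ and the continuity $t\mapsto A(u(t))$ persist at every level via the embedding $W^{1,q}\cap L^{q}\hookrightarrow C$.
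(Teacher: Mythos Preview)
Your proposal is correct and matches the paper's approach: the paper's own proof consists of the single sentence ``The proof is similar to that of Theorem \ref{SE}.'' Your relabeling $Y_{0}^{i}:=Y_{i}$, $Y_{1}^{i}:=Y_{i+2}$ in fact shows that the Appendix statement is a direct corollary of Theorem \ref{SE} rather than merely analogous to it, which is a clean observation; your alternative bootstrap sketch is exactly what the paper's one-line proof intends.
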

\begin{proof}
The proof is similar to that of Theorem \ref{SE}. 
\end{proof}

\subsection*{Acknowledgment} We thank Roland Schnaubelt and Christoph Walker for their help and Deutsche Forschungsgemeinschaft for support through grant SCHR 319/9-1 within the program `Geometry at Infinity'.

\end{document}